\numberwithin{equation}{section}
\newtheorem{theorem}{Theorem}[section]
\newtheorem{corollary}[theorem]{Corollary}
\newtheorem{lemma}[theorem]{Lemma}
\newtheorem{remark}[theorem]{Remark}
\newtheorem{prop}[theorem]{Proposition}
\newcommand\R{{\mathbb{R}}}
\newcommand\Z{{\mathbf{Z}}}
\newcommand\supp{\operatorname{supp}}
\newcommand\BI{{\mathbf I}}
\newcommand\CB{{\mathcal B}}
\newcommand\CC{{\mathcal C}}
\newcommand\CE{{\mathcal E}}
\newcommand\CF{{\mathcal F}}
\newcommand\CI{{\mathcal I}}
\newcommand\CK{{\mathcal K}}
\newcommand\CP{{\mathcal P}}
\newcommand\CR{{\mathcal R}}
\newcommand\CS{{\mathcal S}}
\newcommand\BBR {{\mathbb R}}
\newcommand\BBZ {{\mathbb Z}}
\newcommand\BBS{{\mathbb S}}
\newcommand{\Be}{\begin{equation}}
\newcommand{\Ee}{\end{equation}}
\newcommand{\Bea}{\begin{eqnarray}}
\newcommand{\Eea}{\end{eqnarray}}
\newcommand{\Beas}{\begin{eqnarray*}}
\newcommand{\Eeas}{\end{eqnarray*}}
\newcommand{\Benu}{\begin{enumerate}}
\newcommand{\Eenu}{\end{enumerate}}
\newcommand{\Bi}{\begin{itemize}}
\newcommand{\Ei}{\end{itemize}}
\renewcommand{\Z}{\mathbb Z}
\begin{document}
\author[E. Jeong]{Eunhee Jeong}
\author[S. Lee]{Sanghyuk Lee}
\author[A. Vargas]{Ana Vargas}

\address{Department of Mathematical Sciences, Seoul National University, Seoul 151-747, Republic of  Korea}
\email{moonshine10@snu.ac.kr}
\email{shklee@snu.ac.kr}

\address{Department of Mathematics,  Universidad Aut\'onoma de Madrid,  28049 Madrid, Spain} \email{ana.vargas@uam.es}

\title[Improved bound for the bilinear Bochner-Riesz operator]{Improved bound for the bilinear \\ Bochner-Riesz operator}

\thanks{E. Jeong   supported by NRF-2015R1A4A104167 (Republic of Korea), S. Lee  supported by  NRF-2015R1A2A2A05000956 (Republic of Korea),  and   A. Vargas supported by grants MTM2013-40945-P and MTM2016-76566-P (Ministerio de Econom\'ia y Competitividad, Spain).}

\keywords{Bilinear multiplier operator, Bochner-Riesz summability}

\maketitle 


\begin{abstract}
We study $L^p\times L^q\to L^r$ bounds for the bilinear Bochner-Riesz operator $\mathcal{B}^\alpha$, $\alpha>0$ in $\mathbb{R}^d,$ $d\ge2$, which is defined by 
\[  {\mathcal B}^{\alpha}(f,g)=\iint_{\mathbb{R}^d\times\R^d} e^{2\pi i x\cdot(\xi+\eta)} (1-|\xi|^2-|\eta|^2 )^{\alpha}_+ ~ \widehat{f}(\xi)\,\widehat{g}(\eta)\,d\xi d\eta.\] 
We make use of a decomposition which  relates the estimates for $\mathcal{B}^\alpha$ to those of the square function estimates for the classical Bochner-Riesz operators. In consequence,  we significantly improve the previously known bounds.   
\end{abstract}


\begin{section}{Introduction}
Let $d\ge2$. The Bochner-Riesz operator in $\BBR^d$ of order $\alpha\ge0$ is the multiplier operator defined by
\[\CR_t^\alpha(f)(x) =\int_{\BBR^d}e^{2\pi i x\cdot \xi}(1-|\xi|^2/t^2)^\alpha_+\widehat{f}(\xi)d\xi,\   f\in \CS(\R^d),\  t>0,\]
where $x\cdot y$ is the usual inner product in $\BBR^d$, $r_+ =r$ if $r>0$ and $r_+=0$ if $r\le 0$. Here $\CS(\R^d)$ denotes the Schwartz space in $\R^d$ and $\widehat{f} $ is the Fourier transform of $f$. Related to summability of Fourier series and integral in $L^p$,  boundedness of the Bochner-Riesz operators in $L^p$ spaces has been of interest and it is known as one of most fundamental problems in harmonic analysis which is also connected to the outstanding open problems  such as  restriction  problem  for the sphere and  the Kakeya conjecture (\cite{tao}).
 For  $1\le p\le \infty$ and $p\neq 2$,  it is conjectured that $\CR_1^\alpha$ is bounded on $L^p(\R^d)$ if and only if 
\begin{equation}
\label{eq:alpha}
\alpha >\max\Big\{d\Big|\frac12-\frac1p\Big|-\frac12, 0\Big\}.  
\end{equation}
When $\alpha=0,$ $\CR_1^\alpha$ is the disc multiplier (and ball multiplier) operator and  Fefferman \cite{f} verified that it is unbounded on $L^p(\R^d)$ except $p=2$.
For $d=2$ the conjecture was shown to be true  by Carleson and Sj\"olin \cite{c-s}, but  in higher dimensions $d\ge3$  the conjecture is verified on a restricted range and remains open.
To be more specific,  the  sharp $L^p$-boundedness of $\CR_1^\alpha$ for $p$ satisfying $\max\{p,p'\}\ge 2(d+1)/(d-1)$ follows from  the argument due to Stein \cite{f1} and the sharp $L^2$ restriction estimate for the sphere which is also known as Stein-Tomas theorem.  Subsequently,  progresses have been made by   Bourgain \cite{bourgain}, and  Tao-Vargas 
 in \cite{t-v1} when $d=3$. One of the authors \cite{l}  showed that  the conjecture holds to $\max\{p,p'\}\ge 2+4/d$. When $d\ge5$, further progress was recently made by Bourgain and Guth \cite{b-g} and  the conjecture is now verified  for $\max\{p,p'\}\ge 2+12/(4d-3-k)$ if $d\equiv k$ (mod $3$), $k=-1,0,1$.

Let $m$ be a bounded measurable function on $\mathbb R^{2d}$. Let us define the bilinear multiplier operator   $T_m$  by
\[T_m(f,g)(x)=\iint_{\R^d\times \R^d} e^{2\pi i x\cdot(\xi+\eta)} m(\xi,\eta) \widehat{f}(\xi)\widehat{g}(\eta)d\xi d\eta, \quad f,g\in \CS(\R^d).\]
As in linear multiplier case,  it  is a natural problem  to characterize $L^p\times L^q\to L^r$ boundedness of   $T_m$. The problem may be regarded as bilinear generalization of linear  one and has applications, especially, to controlling nonlinear terms in various nonlinear partial differential equations (\cite{tao-book}).  
Boundedness properties of $T_m$ are mainly determined by the singularity of  the multiplier $m$. In fact, 
if $m$ is smooth and compactly supported, then $T_m$ is bounded from $L^p\times L^q\to L^r$ whenever 
$\frac1p+\frac1q\ge \frac1r$, $p,q,r\ge 1$.

Unless $m=m_1\otimes m_2$ for some $m_1, m_2$ on $\R^d$, $L^p\times L^q\to L^r$  boundedness of $T_m$ can not generally be deduced  from that of  linear multiplier operator, and the problem is known to be substantially more difficult than obtaining boundedness for linear operator.  Most well known are Coifman-Meyer's result  on bilinear singular integrals and  the  boundedness of bilinear Hilbert transform having multiplier with singularity along a line, of which boundedness on Lebesgue space is now relatively well understood (\cite{d-t,l-t,l-t1}). The similar  bilinear operators given by multipliers with different types of singularities also have  been of interest and studied by several authors.  We refer the reader to \cite{b-f-r-v,b-t,c-m1,d-p-t,g-k,m-t,tomita} and references therein for further relevant literature.

In this note, we investigate $L^p\times L^q\to L^r$ boundedness of the bilinear Fourier multiplier operator which is called \emph{bilinear Bochner-Riesz operator. } The operator is a bilinear extension of the Bochner-Riesz operator. As in the classical Bochner-Riesz case, the boundedness of bilinear Bochner-Riesz operator has implication to convergence of  Fourier series, especially, the summability of the product of two $d$-dimensional Fourier series.  See \cite{b-g-s-y} for details.  
Let  $d\ge 1$. The bilinear Bochner-Riesz operator $\CB^\alpha$ of order $\alpha\ge0$ in $\BBR^d$ is defined  by
\begin{equation}\label{eq1}
{\mathcal B}^{\alpha}(f,g)(x) = \iint_{\mathbb{R}^d\times\R^d} e^{2\pi i x\cdot(\xi+\eta)} (1-|\xi|^2-|\eta|^2 )^{\alpha}_+ ~ \widehat{f}(\xi)\widehat{g}(\eta)d\xi d\eta
\end{equation}
for $ f,g\in \CS(\R^d).$   For simplicity we set $m^\alpha(\xi,\eta)=(1-|\xi|^2-|\eta|^2)^\alpha_+$ in what follows. 
We are concerned with the  estimate, for $ f,g\in \CS(\R^d)$, 
\begin{equation}\label{eq:b}
\|\CB^\alpha(f,g)\|_{L^r(\R^d)}\le C\|f\|_{L^p(\R^d)}\|g\|_{L^q(\R^d)}.
\end{equation}
Since $\CB^\alpha$ is commutative under simultaneous translation, 
 \eqref{eq:b} holds only 
if $1\le p,q\le \infty$ and $0<r\le \infty$ satisfies $1/p+1/q\ge 1/r.$
In view of  this,   the case in which  H\"older relation $1/p+1/q=1/r$ holds may be regarded as a critical case. 
This case is also important  since \eqref{eq:b} becomes  scaling invariant. Thus, by the standard density argument  one can deduce from   \eqref{eq:b}  the convergence 
\[\lim_{\lambda\to\infty}\iint_{\mathbb{R}^d\times\R^d} e^{2\pi i x\cdot(\xi+\eta)} \Big(1-\frac{|\xi|^2+|\eta|^2}{\lambda^2} \Big)^{\alpha}_+ ~ \widehat{f}(\xi)\widehat{g}(\eta)d\xi d\eta  = f(x) g(x)
\] 
in $L^r$ whenever $f\in L^p$ and $g\in L^q$, {$p, q\neq \infty$} .   Studies on  boundedness of $\mathcal B^\alpha$ under H\"older relation were carried out  recently by several authors \cite{g-l,d-g, b-ge, b-g-s-y}. 
When $d=1$, the problem was almost completely  solved when the involved $L^p, L^q, L^r$ are Banach spaces (see \cite[Th eorem 4.1]{b-g-s-y} and \cite{g-l,b-ge}), that is to say, all of $p,q,r$ are in $[1,\infty]$. For higher dimensions $d\ge 2$, Diestel and Grafakos \cite{d-g} proved that for $\alpha=0$ \eqref{eq:b} cannot hold if exactly one of  $ p,q,r'=r/(r-1)$ is less than $2$, by modifying  Fefferman's  counterexample to the (linear) disk multiplier conjecture \cite{f}.

Boundedness of $\mathcal B^\alpha$ for general $\alpha>0$  was studied by  Bernicot, Grafakos, Song, and Yan in \cite{b-g-s-y}. They obtained some positive and negative results for the boundedness for $\CB^\alpha$ for any $p$ and $q$ between $1$ and $\infty.$ However, to state their results in full detail is a bit complicated. So,  focussing on Banach cases, we summarize some of them in the following, which are the most recent result regarding  boundedness of $\mathcal B^\alpha$ as far as we are aware.

\begin{prop}\cite[Proposition 4.10, 4.11]{b-g-s-y}\label{thm:bg} Let $d\ge 2$ and $1\le p,q,r\le \infty$ with $1/p+1/q=1/r.$ Then \eqref{eq:b} holds if exponents $p,q,r$ and $\alpha$ satisfy one of the following conditions:
\vspace{-10pt}
\begin{itemize}
\item $2\le p,q < \infty$, $1\le r\le 2$ and $\alpha > (d-1)(1-\frac1r)$; 
\item $ 2\le p,q,r<\infty$ and $\alpha>\frac{d-1}2 +d(\frac12-\frac1r)$;
\item $2\le q<\infty$, $1\le p,r<2$ and $\alpha >d(1/2-1/q)-1/r$;
\item $2\le p<\infty$, $1\le q,r<2$ and $\alpha >d(1/2-1/p)-1/r$.
\end{itemize}
\end{prop}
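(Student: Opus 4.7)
The plan is to reduce the bilinear estimate to a pair of linear Bochner--Riesz estimates through a separation of the multiplier by a Beta-integral identity. With $u = 1-|\xi|^2$ and $v = |\eta|^2$, the identity
\[(u-v)_+^\alpha = B(\alpha_1+1,\alpha_2+1)^{-1}\int_0^1 (u-s)_+^{\alpha_1}(s-v)_+^{\alpha_2}\,ds,\]
valid whenever $\alpha = \alpha_1+\alpha_2+1$ with $\alpha_i>-1$, applied to the multiplier $m^\alpha$ yields
\[\CB^\alpha(f,g)(x) = c_{\alpha}\int_0^1 (1-s)^{\alpha_1}s^{\alpha_2}\,\CR_{\sqrt{1-s}}^{\alpha_1}f(x)\,\CR_{\sqrt s}^{\alpha_2}g(x)\,ds,\]
where $\CR_t^\beta$ denotes the classical Bochner--Riesz operator of order $\beta$ and radius $t$. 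Hölder's inequality pointwise in $x$ (with $1/p+1/q=1/r$), Minkowski's inequality in $s$, and the scale-invariance of $\|\CR_t^\beta\|_{L^s\to L^s}$ reduce the desired bound to
\[\|\CB^\alpha(f,g)\|_r \le c_\alpha\,\|\CR_1^{\alpha_1}\|_{L^p\to L^p}\,\|\CR_1^{\alpha_2}\|_{L^q\to L^q}\,\|f\|_p\|g\|_q.\]

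For items (1)--(2), with $p,q\ge 2$, I would choose the splitting so that $\alpha_1 > d(1/2-1/p)-1/2$ and $\alpha_2 > d(1/2-1/q)-1/2$, placing each factor inside the known $L^p$- and $L^q$-range of the classical Bochner--Riesz theorem. Items (3)--(4), with exactly one of the input exponents below $2$, are handled by assigning $\alpha_i=0$ to the side whose exponent equals $2$ in an intermediate step via Plancherel, while the remaining order is absorbed by the other factor whose bound uses the dual Bochner--Riesz range $\beta>d|1/s-1/2|-1/2$ for $1\le s<2$. Combined with interpolation against the cases in (1), this covers the full range of (3) and (4).

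The main obstacle is that the crude Hölder--Minkowski bookkeeping exceeds the stated thresholds on $\alpha$ by an additive term of order $1/r'$ or $1/r$, the ``$+1$'' coming from the Beta integration. The standard resolution is a further dyadic decomposition of $m^\alpha$ around the sphere $\{|\xi|^2+|\eta|^2=1\}$: writing $m^\alpha = \sum_{k\ge 0}2^{-k\alpha}m_k$ with $m_k$ uniformly bounded and supported in a shell of width $2^{-k}$, each shell piece is estimated through the separation identity combined with Plancherel on the $L^2$-annulus and the Tomas--Stein restriction theorem for $S^{d-1}$ transversally, gaining a factor $\sim 2^{-k/2}$ over the raw $L^p$-operator norm. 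Summing the resulting geometric series in $k$ delivers the claimed $\alpha$-thresholds. This same linear input --- restriction/Plancherel --- is precisely what the present paper refines by using stronger square function estimates for the classical Bochner--Riesz operator, which yields the improved bounds announced in the abstract.
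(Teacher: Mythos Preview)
The paper does not itself prove this proposition; it is quoted from \cite{b-g-s-y}, whose method the paper summarizes as follows: decompose $m^\alpha$ dyadically away from the sphere $|\xi|^2+|\eta|^2=1$, use kernel estimates for the resulting bilinear pieces to obtain endpoint bounds at $(p,q,r)\in\{(1,\infty,1),(\infty,1,1),(2,\infty,2),(\infty,2,2),(\infty,\infty,\infty)\}$, and then interpolate. The paper's own decomposition lemma (Lemma~\ref{Lemm:reduction}) gives an alternative route to the $L^2\times L^2\to L^1$ case (Remark~\ref{re:trivial}) and, combined with square-function bounds, improves items (1)--(2).

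Your Beta-integral identity is correct and yields a genuine product representation
\[
\CB^\alpha(f,g)(x)=c_\alpha\int_0^1 (1-s)^{\alpha_1}s^{\alpha_2}\,\CR_{\sqrt{1-s}}^{\alpha_1}f(x)\,\CR_{\sqrt s}^{\alpha_2}g(x)\,ds,
\]
which is closer in spirit to the paper's discrete product decomposition in Lemma~\ref{Lemm:reduction} than to the kernel-estimate approach of \cite{b-g-s-y}. But as you note, the constraint $\alpha=\alpha_1+\alpha_2+1$ with each $\CR_1^{\alpha_i}$ bounded on $L^{p_i}$ overshoots: at $p=q=2$ one needs $\alpha_1,\alpha_2\ge 0$, hence $\alpha\ge 1$, whereas item (1) demands only $\alpha>0$. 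Your proposed repair has real gaps. First, once you pass to the shell multipliers $m_k$ (smooth bumps, not powers $(u-v)_+^\alpha$), the Beta identity no longer applies; you would need a separate product decomposition of $m_k$, and none is given. Second, the claimed ``gain of $2^{-k/2}$ from Plancherel and Tomas--Stein'' is asserted without a mechanism: which quantity is being bounded, and why does it beat the raw operator norm by exactly this factor? Third, for items (3)--(4) your suggestion of setting $\alpha_i=0$ ``on the side whose exponent equals $2$'' only makes sense at the single vertex $q=2$ (respectively $p=2$), since $\CR_1^0$ is unbounded on $L^s$ for $s\neq 2$; you do not explain how interpolation with item (1) fills in the remaining range.

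The correct implementation of your product idea is essentially Lemma~\ref{Lemm:reduction}: replace the continuous Beta integral by a finite sum $\sum_\rho S^{\phi_1}_{\rho,\delta}f\cdot S^{\phi_2}_{\varrho-\rho,\delta}g$ over thin annuli, then apply Cauchy--Schwarz and sharp annulus estimates (Plancherel on $L^2$, Stein--Tomas, or square-function bounds). That construction eliminates the $+1$ loss structurally rather than trying to recover it afterwards.
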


In particular, $\CB^\alpha$ is bounded from $L^2\times L^2$ to $L^1$ if and only if $\alpha>0.$ 
In \cite{b-g-s-y}\,   $L^2\times L^2\to L^1$ boundedness was shown  for general bilinear multiplier operator $T_m$ of which the multiplier $m$ is bi-radial and compactly supported and satisfying some regularity condition.   The authors  took   advantage of bi-radial structure of $m$,  which makes it possible to  reduce a $2d$-dimensional symbol to  $2$-dimensional one. By verifying a minimal regularity condition for $m^\alpha$ they  showed $\mathcal B^\alpha$ is bounded from $L^2\times L^2\to L^1$ for all $\alpha>0$.  For the other exponents   $p,q,r$ they used the standard argument which has been used to prove $L^p$-boundedness for the classical Bochner-Riesz operator.  To be precise,  regarding $\CR^\alpha_1$ as a multiplier operator acting on $\mathbb R^{2d}$, they decomposed the multiplier dyadically away from the set $\{(\xi,\eta): m^\alpha(\xi,\eta)=0\}=\{(\xi,\eta): |\xi|^2+|\eta|^2=1\}$ and 
used estimates for the kernels of  bilinear multiplier operators which result from dyadic decomposition. From this, they showed that  \eqref{eq:b}  holds on a certain  range of $\alpha$ when $(p,q,r)=(1,\infty,1),(\infty,1,1),(2,\infty,2),(\infty,2,2)$, and  $(\infty,\infty,\infty)$. Then,  complex interpolation was used to obtain results for general exponents.

 However, as is well known in studies of multiplier operators of Bochner-Riesz type,  the kernel estimate alone is not  enough to show sharp results except for some specific  exponents.  Regarding such problem  the heart of matter lies in quantitative understanding of oscillatory cancellation.  In contrast with  the classical Bochner-Riesz operator of which boundedness is almost characterized by the frequency near the singularity on the sphere,  for the bilinear  Bochner-Riesz operator   we need to understand interaction between the two frequency variables $\xi, \eta$ as well as behavior related to the singularity of the multiplier  of $\mathcal B^\alpha$.  From  \eqref{eq1} it is natural to expect that  the worst scenario may arise from the contribution  near the intersection of the sets $|\xi|^2+|\eta|^2=1$ and  $\xi=-\eta$, where the oscillation effect disappears.  
 Our main novelty is in exploiting  this observation.  First, following the usual way we decompose $m^\alpha$ away from the singularity and then make further decomposition so that the interaction between two  $\xi$ and $\eta$ can be minimized. Then,  to handle the resulting operators we use  square function estimates for the Bochner-Riesz operator about which we give more details below.

There have been various works  which are related to so called \emph{bilinear approach} to various linear problems, such as  bilinear restriction estimates (see, for example \cite{t-v-v, wolff, t-v1, l, lee2, tao}). Since $\CB^\alpha$ has bilinear  structure, it seems natural to expect that  such bilinear methodology can be useful  to obtain improved bounds but this doesn't seem to work well for $\CB^\alpha$, especially, because of the interaction between  two frequencies near the set $\xi=-\eta$. This is the reason why we rely on the square function estimate instead of following the typical bilinear approach.

We now consider the square function $\mathfrak{S}^\alpha$ for the Bochner-Riesz means, which is defined by 
\[
 \mathfrak{S}^\alpha f(x) =\Big(\int_0^\infty \Big|\frac{\partial}{\partial t} \CR^\alpha_tf(x)\Big|^2 t dt\Big)^{1/2}.
 \]
This  was introduced by Stein \cite{stein} in order to study pointwise convergence of the Bochner-Riesz means and finding the optimal $\alpha$ for which the estimate 
\begin{equation}
 \label{square}
  \|\mathfrak S^\alpha
\!f\|_p\le C \|f\|_p\,
\end{equation}
holds 
has been investigated and it is related to various problems. See Cabery-Gasper-Trebels \cite{c-g-t} and Lee-Rogers-Seeger \cite{LRV}. The  estimate \eqref{square} is well understood for $1<p\le 2$. For $p>2$, however, it was conjectured that  $\mathfrak{S}^\alpha$ is bounded on $L^p(\R^d)$ if and only if  $\alpha >\max\{d(1/2-1/p),1/2\}.$ When $d=2$ the conjecture was proved by Carbery \cite{c}, and in higher dimensions partial results 
are known (see \cite{christ, LRV, lee1})  and   the best known results can be found in   \cite{LRV, lee1}.    

Let $0<\delta\ll 1$, $\phi$ be a smooth function supported in $[-1,1]$,  and define a square function with localized frequency which is given by 
\begin{equation}\label{lo-square}
  \mathfrak{S}_\delta^\phi f(x) = \Big(\int_{1/2} ^2\Big|\phi\Big(\frac{|D|^2-t}\delta\Big) f(x)\Big|^2dt\Big)^{1/2}.
  \end{equation}
The conjectured $L^p$ ($2<p\le \infty$)  estimate for 
$\mathfrak{S}^\alpha$ is essentially equivalent to  the following:   For $p\ge\frac{2d}{d-1}$ and $\epsilon>0$,  there exists 
$C=C(\epsilon)$ such that 
\Be
\label{del-square}
 \|\mathfrak{S}_\delta^\phi f\|_p\le C\delta^{\frac{2-d}2+\frac dp-\epsilon}\|f\|_p\, . 
 \Ee
Implication from \eqref{del-square} to \eqref{square} is easy to see from dyadic decomposition and using easy $L^2$ estimate 
$\|\mathfrak{S}_\delta^\phi f\|_2\le C\delta^{\frac12}\|f\|_2$ and interpolation.  
We don't draw direct connection from \eqref{eq:b} to \eqref{square}. Instead we show that the estimate  \eqref{eq:b}  can be deduced from  $L^p$ bound for $\mathfrak{S}_\delta^\phi$. 

To present our results, we introduce some notations: For $\nu\in [0, 1/2]$, we set  
\begin{align*}
\Delta_1(\nu)=& \big \{(u,v)\in [0,1/2]^2 : u,v\le \nu \big \},
\quad \Delta_2(\nu)= \big \{(u,v)\in  [0,1/2]^2 :  u, v\ge \nu \big \},
\\
\Delta_3(\nu)=&
\big \{(u,v)\in  [0,1/2]^2 : u < \nu <v \mbox{ or } v < \nu< u\big \}.
\end{align*} 
\begin{figure}
\centering
\includegraphics[height=6cm]{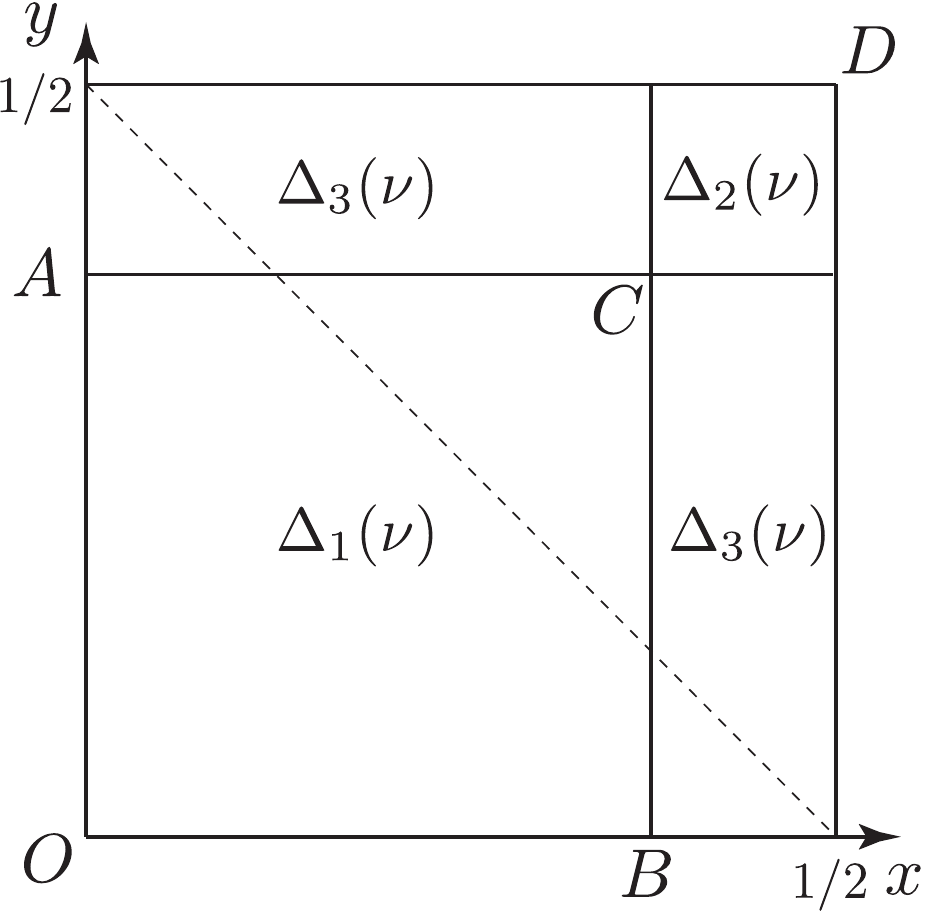}
\caption{The points $A=(0,\nu)$, $B=(\nu,0)$, $C=(\nu,\nu)$,  $D=(\frac12,\frac12)$, and $O=(0,0)$. }
\label{fig1}
\end{figure}
The regions $\Delta_j(\nu)$, $1\le j\le 3$, are pairwise disjoint and $\bigcup_{j=1}^3 \Delta_j(\nu)=[0,1/2]^2$ (see Figure \ref{fig1}). 
For $u\in [0,1]$ set \[\beta_\ast(u)=\frac{d-1}2-ud\,.\] Let us define a real valued function $\alpha_\nu: [0,1/2]^2\to \mathbb R$  by 
\[\alpha_\nu(u,v)\!
= \! \begin{cases}      
\beta_\ast(u)+\beta_\ast (v)= (d-1)-d(u+v), &   \!\!  (u,v)\in \Delta_1(\nu),  
\\
\frac{2-2u-2v}{1-2\nu}\beta_\ast(\nu),& \!\! (u,v)\in \Delta_2(\nu), 
\\
\max\{\beta_\ast(u),\beta_\ast(v)\}+ \beta_\ast(\nu)\min\{\frac{1-2u}{1-2\nu},\frac{1-2v}{1-2\nu}\},  &  \!\!  (u,v)\in \Delta_3(\nu).
\end{cases}    \]
The following is our firs result.

\begin{theorem}\label{thm:main}
 Let $d\ge 2$,  $ p_\circ\ge 2d/(d-1)$ and let $2\le p,q\le\infty$ and $r$ with $1/r=1/p+1/q.$  Suppose that for $p\ge p_\circ$ the estimate \eqref{del-square} holds with $C$ independent of $\phi$  {whenever $\phi\in \CC_N([-1,1])$ for some positive integer $N$}.  Here $\CC_N([-1,1])$ is defined by \eqref{def-cn}.  Then for any $\alpha >\alpha_{\frac1{p_\circ}}(1/p,1/q)$ \eqref{eq:b} holds.
  \end{theorem}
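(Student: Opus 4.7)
The plan is to reduce, via a dyadic decomposition of $m^\alpha$ around its singular set $\{|\xi|^2+|\eta|^2=1\}$, to uniform bounds on the bilinear operator $\mathcal{B}_\delta$ with multiplier $\phi(\delta^{-1}(1-|\xi|^2-|\eta|^2))$, where $\phi$ is supported in $[1/2,2]$. It suffices to show that for every $\epsilon>0$,
$$\|\mathcal{B}_\delta(f,g)\|_r \lesssim \delta^{-\alpha_\nu(1/p,1/q)-\epsilon}\|f\|_p\|g\|_q,$$
because the dyadic series $\sum_k 2^{-k\alpha}\|\mathcal{B}_{2^{-k}}(f,g)\|_r$ then converges for $\alpha>\alpha_\nu(1/p,1/q)$.

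The key identity is a factorization of the multiplier. Choosing $\phi_1,\phi_2\in \mathcal{C}_N([-1,1])$ with $\phi_1\ast\phi_2 = \phi$ (up to a sign),
$$\phi\!\left(\frac{1-|\xi|^2-|\eta|^2}{\delta}\right)=\int \phi_1\!\left(\frac{|\xi|^2-s}{\delta}\right)\phi_2\!\left(\frac{|\eta|^2-(1-s)}{\delta}\right)\frac{ds}{\delta}.$$
Writing $P_s^\psi h(x) = \mathcal{F}^{-1}[\psi(\delta^{-1}(|\cdot|^2-s))\widehat{h}](x)$, this gives the representation $\mathcal{B}_\delta(f,g)(x) = \int P_s^{\phi_1}f(x)\,P_{1-s}^{\phi_2}g(x)\,ds/\delta$. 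The $s$-integration is localized to a bounded interval, which, up to a harmless rescaling near $s=0$ and $s=1$, falls within the range where \eqref{del-square} applies. Cauchy--Schwarz in $s$ followed by H\"older in $x$ then yields
$$\|\mathcal{B}_\delta(f,g)\|_r \lesssim \delta^{-1}\,\|\mathfrak{S}_\delta^{\phi_1}f\|_p\,\|\mathfrak{S}_\delta^{\phi_2}g\|_q.$$

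Invoking the hypothesis \eqref{del-square} when $p\ge p_\circ$ and the trivial bound $\|\mathfrak{S}_\delta^\psi h\|_2\lesssim \delta^{1/2}\|h\|_2$ otherwise produces four extreme-point estimates: for $(1/p,1/q)\in\Delta_1(\nu)$, applying the square-function bound to both factors gives $\delta^{-\beta_\ast(1/p)-\beta_\ast(1/q)-2\epsilon}$; for $1/q=1/2$ and $1/p\le \nu$ (and its symmetric counterpart), applying the square function to $f$ and the $L^2$ bound to $g$ gives $\delta^{-\beta_\ast(1/p)-\epsilon}$; and for $p=q=2,\ r=1$, Plancherel on both sides yields no $\delta$-loss. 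The exponents obtained on each boundary piece of $[0,1/2]^2$ match $\alpha_\nu(1/p,1/q)$ exactly.

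The interior cases $(1/p,1/q)\in \Delta_2(\nu)\cup \Delta_3(\nu)$ then follow by bilinear complex interpolation applied to the analytic family $z\mapsto \mathcal{B}^{\alpha(z)}$, regularized in the standard way by a factor $1/\Gamma(\alpha(z)+1)$ to ensure admissible growth on vertical strips. A direct algebraic check shows that linear interpolation in $1/q$ between the bounds at $(1/p,\nu)$ and $(1/p,1/2)$ reproduces the $\Delta_3$ formula, and a further interpolation yields the $\Delta_2$ formula. The main obstacle I anticipate is this last step: one must construct the analytic family with sub-exponential growth in $|\mathrm{Im}\,z|$ on the two boundary strips and arrange the interpolation path so that each endpoint coincides with a region where one of the four extreme-point bounds applies. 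A secondary technical point is that the constant in \eqref{del-square} must be uniform in $\phi_1,\phi_2\in\mathcal{C}_N$---precisely the reason the hypothesis is stated that way---since both factors of the factorization enter the Cauchy--Schwarz step.
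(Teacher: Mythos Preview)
Your approach is correct and in fact simplifies the paper's in one place: the convolution identity $\psi=\tilde\phi_1\ast\tilde\phi_2$ gives the product representation $\mathcal B_\delta(f,g)=\delta^{-1}\int P_s^{\phi_1}f\cdot P_{1-s}^{\phi_2}g\,ds$ directly, whereas the paper first discretizes at scale $\tilde\delta=\delta^{1+\epsilon}$ and then uses a Taylor expansion of $e^{2\pi i\tau(\varrho-|\xi|^2-|\eta|^2)/\delta}$ to decouple $\xi$ and $\eta$ (Lemma~3.1). After that reduction the two arguments converge: the paper bounds the discrete square function $\mathfrak D_\delta^\phi$, relates it to $\mathfrak S_\delta^\phi$ via Lemma~2.3, and handles the full range $\rho\in[0,1]$ by dyadically decomposing into $\rho\sim 2^{-k}$ and rescaling to unit radius (equation \eqref{scaled}), summing the resulting geometric series in $k$ since $\beta_\ast(1/p)>0$ for $p>2d/(d-1)$. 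Your ``harmless rescaling near $s=0$ and $s=1$'' is exactly this step and is not quite trivial; you should also note that for $s\lesssim\delta$ one falls back on a direct kernel bound (Lemma~2.2), since the rescaled $\delta$ is then $O(1)$ and the hypothesis no longer applies.

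Your interpolation step is overcomplicated. There is no need to build an analytic family $\mathcal B^{\alpha(z)}/\Gamma(\alpha(z)+1)$: the bounds you obtain are for the \emph{fixed} bilinear operator $\mathcal B_\delta$ at the vertices $(\nu,\nu)$, $(\nu,1/2)$, $(1/2,\nu)$, $(1/2,1/2)$, and throughout $\Delta_1(\nu)$, so ordinary bilinear complex interpolation (Riesz--Thorin, which applies since $r\ge 1$ here) at fixed $\delta$ immediately gives \eqref{eq:TS} with the correct exponent $\alpha_\nu(1/p,1/q)$ on each of $\Delta_2,\Delta_3$; then sum in $\delta$. This is precisely what the paper does (``In view of the interpolation it suffices to prove \eqref{eq:TS} for critical pairs\dots''). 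So the ``main obstacle'' you anticipate does not arise.
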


For $d\ge 2$ we set $p_0(d)$ and $p_s$ to be $p_0(d)=2+\frac{12}{4d-6-k},~d\equiv k ~(\mbox{mod } 3),~k=0,1,2,$
\begin{equation}\label{eq:p}
 p_s=p_s(d)=\min\Big\{p_0(d),\frac{2(d+2)}{d}\Big\}. 
  \end{equation}
We will prove (Lemma \ref{lem:lee'}) that \eqref{del-square} holds for $p\ge p_s$. Hence, this and Theorem \ref{thm:main}  yields the following.

\begin{corollary}\label{main}   Let $d\ge 2$, and let $2\le p,q\le\infty$ and $r$ be given by $1/r=1/p+1/q.$ Then \eqref{eq:b} holds provided that  $\alpha >\alpha_{\frac1{p_s}}(1/p,1/q)$ 
\end{corollary}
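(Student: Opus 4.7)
The plan is to deduce Corollary \ref{main} from Theorem \ref{thm:main} by taking $p_\circ=p_s$; once the localized square function estimate \eqref{del-square} is verified for $p\ge p_s$ uniformly in $\phi\in\mathcal{C}_N([-1,1])$ (this is Lemma \ref{lem:lee'}), the corollary is immediate. All of the work therefore lies in proving that lemma, and I would split the verification at the two endpoints that define $p_s=\min\{p_0(d),2(d+2)/d\}$.

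First, at the Stein--Tomas endpoint $p=2(d+2)/d$: since $p\ge 2$, Minkowski's inequality gives
\[
\|\mathfrak{S}_\delta^\phi f\|_p \le \Bigl(\int_{1/2}^{2}\bigl\|\phi((|D|^2-t)/\delta)f\bigr\|_p^2\,dt\Bigr)^{1/2},
\]
and each integrand is a multiplier operator whose symbol is supported in a $\delta$-annulus around $|\xi|^2=t$. Via a standard $TT^\ast$ argument applied to the $L^2$-restriction inequality for the sphere (Stein--Tomas), each such operator is bounded on $L^p$ with the desired power of $\delta$; performing the integration in $t$ produces the exponent $\delta^{(2-d)/2+d/p}$ exactly at $p=2(d+2)/d$. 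The trivial $L^2$ bound $\|\mathfrak{S}_\delta^\phi f\|_2\le C\delta^{1/2}\|f\|_2$ combined with interpolation then extends \eqref{del-square} down to the intermediate range.

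Second, at $p=p_0(d)$: I would invoke the Bourgain--Guth progress \cite{b-g} on the Bochner-Riesz conjecture. As explained in \cite{LRV,lee1}, a sharp $L^p$-bound on the classical Bochner-Riesz operator of critical index implies the corresponding $L^p$-bound on $\mathfrak{S}_\delta^\phi$ at the same exponent, through a wave-packet/angular decomposition at scale $\delta$ in which the cutoff $\phi$ appears only via a fixed number of its derivatives. Applying this implication in the range where \cite{b-g} establishes Bochner-Riesz yields \eqref{del-square} at $p_0(d)$ with $\phi$-uniform constants; interpolation with the $L^2$ bound fills in $p\in[p_s,p_0(d)]$ if $p_0(d)>2(d+2)/d$, and an identical scheme covers the opposite case.

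Combining the two endpoint estimates produces \eqref{del-square} for all $p\ge p_s$, uniformly in $\phi\in\mathcal{C}_N([-1,1])$ for a suitable $N$, so Theorem \ref{thm:main} applied with $p_\circ=p_s$ delivers \eqref{eq:b} for every $\alpha>\alpha_{1/p_s}(1/p,1/q)$. The main obstacle is the $\phi$-uniformity at the second endpoint: the wave-packet reduction from the Bochner-Riesz estimate to the square function replaces $\phi$ by its restriction to $\delta$-intervals, and one must monitor carefully that only finitely many derivatives of $\phi$ enter the final constant. This is routine in spirit but requires some bookkeeping, since the $L^p$-arguments underlying \cite{b-g} are not, in their original form, stated in a $\phi$-uniform fashion.
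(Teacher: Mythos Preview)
Your overall strategy---apply Theorem~\ref{thm:main} with $p_\circ=p_s$, reducing everything to Lemma~\ref{lem:lee'}---is exactly what the paper does. The paper, however, does not re-derive Lemma~\ref{lem:lee'}: it simply quotes Proposition~\ref{prop:lee} (which is \cite[Proposition~3.2]{lee1}) and performs a routine change of variables to pass from the model in $\mathfrak{E}(\epsilon_0,N)$ to the multiplier $\phi((|D|^2-t)/\delta)$. So for the corollary itself you are done once you cite Lemma~\ref{lem:lee'}.

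The extra work you propose---an independent proof of Lemma~\ref{lem:lee'}---has genuine gaps at both endpoints. At $p=2(d+2)/d$ the Minkowski step only yields
\[
\|\mathfrak{S}_\delta^\phi f\|_p \;\le\; \Big(\int_{1/2}^{2}\|\phi((|D|^2-t)/\delta)f\|_p^2\,dt\Big)^{1/2}
\;\lesssim\; \sup_t\|\phi((|D|^2-t)/\delta)\|_{p\to p}\,\|f\|_p,
\]
and the right-hand side is the Bochner--Riesz bound $\delta^{d/p-(d-1)/2}$, which is $\delta^{1/2}$ weaker than the target $\delta^{d/p-(d-2)/2}$ in \eqref{del-square}; the orthogonality gain in $t$ is precisely what Minkowski throws away. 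Moreover, Stein--Tomas only reaches $p\ge 2(d+1)/(d-1)$, not $2(d+2)/d$; the square-function bound at $2(d+2)/d$ is due to \cite{LRV} via Tao's \emph{bilinear} restriction theorem, not $TT^\ast$ of the linear restriction estimate.

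At $p=p_0(d)$ you assert that a sharp $L^p$ Bochner--Riesz bound implies the corresponding square-function bound. This implication is not established in \cite{LRV,lee1}; in fact it is open in general (the square function is the stronger statement). What \cite{lee1} actually does is run the Bourgain--Guth multilinear machinery directly on the square function, and Proposition~\ref{prop:lee} is the outcome. So the $\phi$-uniformity bookkeeping you flag is real, but it must be carried out inside the multilinear argument itself, not as a post-hoc reduction from the Bochner--Riesz operator.
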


 Remarkably,  when $d=2$  Corollary \ref{main} gives sharp estimates for some $p,q$ other than $p=q=2$.  Indeed, note that $p_s(2)=4$ and, thus, for $2\le p, q\le 4$ we have $\alpha_{\frac14}(\frac1p,\frac1q)=0$.  By Corollary \ref{main} it follows that \eqref{eq:b} holds for $\alpha>0$  if $(p,q)\in [2,4]^2$.   This result is clearly sharp  in view of Diestel-Grafakos's result \cite{d-g}.

 Corollary \ref{main} provides improved  estimates over those in  Proposition  \ref{thm:bg}  except the case $p=2$ and $q=2$.    This can be clearly seen by considering the boundedness of  $\CB^\alpha$ from $L^p(\R^d)\times L^p(\R^d)$ to $L^{p/2}(\R^d)$. See Figure \ref{fig2}.  However,  we do not know whether  the exponents  in Corollary \ref{main} are sharp for most of the cases and we are only able to provide improved lower bounds for $\alpha$  which is slightly  better than the one known before. (See  the section \ref{necessary}.)

\begin{figure}
\centering
\begin{subfigure}{0.5\textwidth}
\centering
\includegraphics[height=6cm]{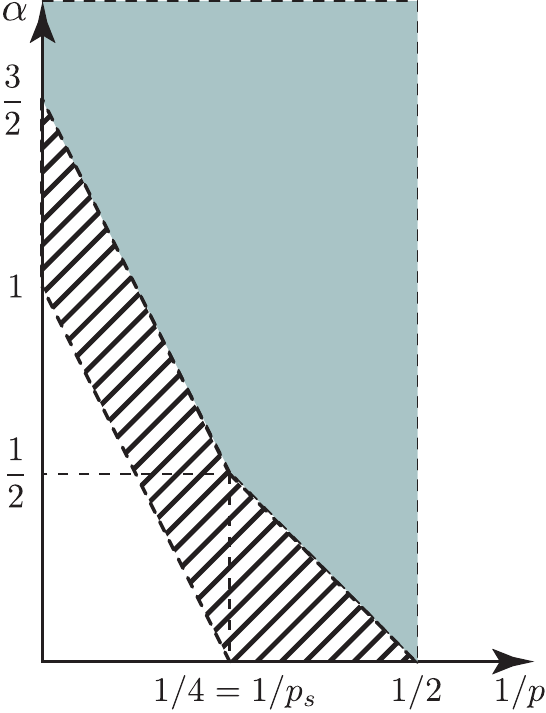}
\caption{$d=2$}
\end{subfigure}%
\begin{subfigure}{0.5\textwidth}
\centering
\includegraphics[height=6cm]{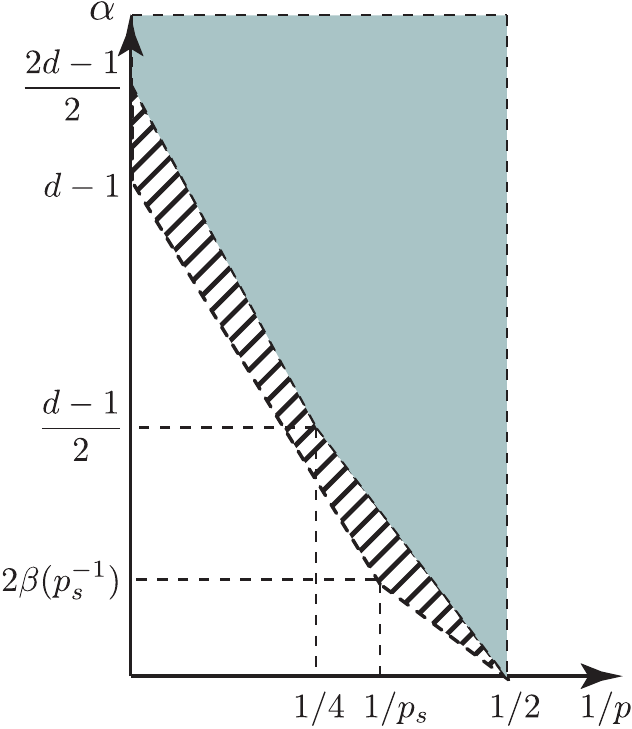}
\caption{$d\ge3$}
\end{subfigure}
\caption{The range of $\alpha$ and $p$ for $\CB^\alpha: L^p\times L^p\to L^{\frac p2}$, $d\ge2$. Proposition \ref{thm:bg} gives boundedness for $(1/p,\alpha)$ in the shaded region but Theorem \ref{thm:main} extends it the slashed region.}
\label{fig2}
\end{figure}

The main new idea  of this work is a decomposition lemma (Lemma \ref{Lemm:reduction}) which enables us to  split frequency interaction between two variables $\xi$ and $\eta$.  The decomposition lemma basically reduces the problem to dealing with the operator $\CB_{\delta,\varrho}^{\phi_1,\phi_2}$ which is  a sum of products of two linear operators with localized frequency. See \eqref{op:B} for the precise definition of $\CB_{\delta,\varrho}^{\phi_1,\phi_2}$.  This lemma  makes the problem  much simpler.  For example,  various previous result  can be easily obtained by making use of the lemma. Moreover, $S_{\rho,\delta}^\phi$ (see \eqref{eq:S}) appeared in $\CB_{\delta,\varrho}^{\phi_1,\phi_2}$ are closely  related to the (linear) Bochner-Riesz operator $\CR_t^\alpha$ and its bounds are now better understood.  Since  $\CB_{\delta,\varrho}^{\phi_1,\phi_2}$ has product structure,  by Cauchy-Schwarz inequality we can simply bounds this with a product of discretized square function  $\mathfrak D_\delta^\phi$ defined by \eqref{two}, of which  sharp bounds can be deduced from the well-known estimates for the square function  $\mathfrak S_\delta^\phi$.    

The rest of this paper is organized as follows. In Section 2 we consider two different types of square functions $\mathfrak{S}^\phi_\delta$ and $\mathfrak D_\delta^\phi$ and make observation that  their $L^p$-boundedness properties are more or less equivalent. In Section 3 we introduce a decomposition lemma which convert our problem to estimates for bilinear operators $\CB_{\delta,\varrho}^{\phi_1,\phi_2}$. In Section 4 we prove Theorem \ref{thm:main} and discuss the boundedness \eqref{eq:b} for $\CB^\alpha$ under sub-critical relation $1/p+1/q> 1/r$.  Finally, in Section 4 we find a new lower bound for $\alpha$.

Throughout the paper, the positive constant $C$ may vary line to line. For $A,B>0,$ by $A\lesssim B$, we mean $A\le CB$ for some constant $C$ independent of $A,~B$. We write $A\sim B$ to denote $A\lesssim B$ and $A\gtrsim B.$ Also, $\widehat{f}$ and $f^\vee$ denote the Fourier and inverse Fourier transforms of $f$, respectively: $
\widehat{f}(\xi)=\int_{\mathbb{R}^d}e^{-2\pi i x\cdot\xi} f(x)dx ,$ $ f^\vee(x)=\int_{\mathbb{R}^d}e^{2\pi i x\cdot\xi}f(\xi)d\xi.
$
We also use  $\CF(f)$ and $\CF^{-1}(f)$ for the Fourier and the inverse Fourier transforms of $f$, respectively. For a bilinear operator $T$ we denote by $\|T\|_{L^p\times L^q\to L^r}$ the operator norm of $T$ from $L^p(\R^d)\times L^q(\R^d)$ to $L^r(\R^d)$. 
\end{section}

\renewcommand{\CI}{I}
\begin{section}{Preliminaries}
In this section we obtain several preliminary results which we need in the course of proof. 

Let $\CI\subset \R$ be an interval, and $N$ be nonnegative integer. 
We define $\CC_N(\CI)$ to be a class of smooth functions $\phi$ on $\R$ satisfying
 \Be \label{def-cn}
  \supp\phi \subset \CI\quad\mbox{and}\quad  \sup_{t\in \CI} |\phi^{(k)}(t) | \le 1,\quad k=0,\cdots,N.
  \Ee
For a smooth $\phi$ and $0<\delta\ll 1$, we define linear operators $S_{\rho, \delta}^\phi$ by
\begin{equation}\label{eq:S}
 \widehat{S_{\rho,\delta}^\phi f}(\xi)= \phi\Big(\frac{|\xi|^2-\rho}{\delta}\Big)\widehat{f}(\xi),\quad f\in \CS(\R^d).
\end{equation}

\subsection{Kernel estimate}
 For $\omega\in \R^d$ with $|\omega|=1$ and $0<l \le 1$, let $\chi^{\omega}_l\in  C^\infty(\mathbb R^d\setminus\{0\}) $ be  a  homogeneous function  of degree $0$  such that  $\chi^{\omega}_l$ is supported in
$\Gamma_l^{\omega} := \{ \xi :  |\xi/|\xi| -\omega | \le 2l\}$ and 
 \[ |\partial_{\xi}^{\alpha} \chi^{\omega}_l(\xi)| \le C_{\alpha}l^{-|\alpha|}|\xi|^{-|\alpha|}\]
for all multi-indices $\alpha$.  We also set 
\begin{equation}
 K^{\omega,l}_{\rho,\delta}(x) = \int_{\R^d} e^{2\pi i x\cdot \xi} \phi\Big(\frac{|\xi|^2-\rho}{\delta}\Big) \chi^{\omega}_l(\xi) d\xi \label{kernel}.
\end{equation}
\begin{lemma}\label{Lemm:kernel}
Let $d\ge2$, $0<\delta\ll1,~2\delta\le \rho\le 1$. 
Suppose that $l\sim (\delta/\rho)^{1/2}$. Then there is a constant  $C$, independent of $\delta,\rho,\omega,$ such that
\begin{equation}\label{eq:k-es}
 |K^{\omega,l}_{\rho,\delta}(x)| \le C\rho^{-1/2}\delta^{(d+1)/2}(1+\delta^{1/2}|x- (\omega\cdot x) \omega|+\delta\rho^{-1/2}|\omega\cdot x| )^{-N}
 \end{equation}
  whenever $\phi\in \CC_N([-1,1])$. 
\end{lemma}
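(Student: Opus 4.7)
The approach is integration by parts in coordinates adapted to $\omega$. After an orthogonal rotation assume $\omega = e_1$; decompose $\xi = \tau\omega + \sigma$ with $\sigma\in\omega^\perp$ and $x = x_1\omega + x'$ with $x_1 = \omega\cdot x$, $x' = x - x_1\omega$. Then
\[ K^{\omega,l}_{\rho,\delta}(x) = \int_\R\!\int_{\omega^\perp} e^{2\pi i(x_1\tau + x'\cdot\sigma)}\, a(\tau,\sigma)\, d\sigma\, d\tau, \]
where $a(\tau,\sigma) := \phi\bigl((\tau^2+|\sigma|^2-\rho)/\delta\bigr)\,\chi^\omega_l(\tau\omega+\sigma)$. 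The geometric input is that on $\supp a$ the cone condition with $l\sim (\delta/\rho)^{1/2}$ forces $|\sigma|\lesssim l\rho^{1/2}\sim\delta^{1/2}$, and then $\phi$ forces $\tau = \rho^{1/2}+O(\delta/\rho^{1/2})$; hence $\supp a$ has volume $\sim \delta^{(d+1)/2}/\rho^{1/2}$, which together with $|a|\lesssim 1$ gives the trivial estimate $|K^{\omega,l}_{\rho,\delta}(x)|\lesssim \rho^{-1/2}\delta^{(d+1)/2}$, matching the desired prefactor.

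To extract the decay I would integrate by parts $N$ times in $\tau$ against $e^{2\pi i x_1\tau}$ and, separately, $N$ times in each $\sigma_j$ against $e^{2\pi i x'_j\sigma_j}$. The crucial derivative estimates on $\supp a$ are
\[ |\partial_\tau^k a|\lesssim (\rho^{1/2}/\delta)^k, \qquad |\partial_{\sigma_j}^k a|\lesssim \delta^{-k/2}, \qquad 0\le k\le N, \]
which follow from the Faà di Bruno formula applied to $\phi$ (using $\|\phi^{(k)}\|_\infty\le 1$, the defining property of $\CC_N([-1,1])$) with $\partial_\tau u \sim \rho^{1/2}/\delta$ and $\partial_{\sigma_j}u = O(\delta^{-1/2})$, together with the hypothesis $|\partial^\beta \chi^\omega_l|\lesssim l^{-|\beta|}|\xi|^{-|\beta|}\sim\delta^{-|\beta|/2}$; a Leibniz expansion then shows that the $\chi^\omega_l$-contribution never exceeds the $\phi$-contribution in either coordinate direction. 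Each integration by parts in $\tau$ consequently contributes a factor $\delta/(\rho^{1/2}|x_1|)$ and each one in $\sigma_j$ a factor $(\delta^{1/2}|x'_j|)^{-1}$. Combining the three resulting bounds via $\min(1, A^{-N}, B^{-N})\lesssim (1+A+B)^{-N}$ produces the claimed decay $(1+\delta^{1/2}|x'|+\delta\rho^{-1/2}|x_1|)^{-N}$.

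The only subtle bookkeeping point is that $\chi^\omega_l$, being homogeneous of degree zero, has its $\tau$-derivative determined by its angular variation, so that the generic bound $l^{-1}|\xi|^{-1}\sim\delta^{-1/2}$ holds uniformly in $\tau$ as well as in $\sigma$; this is dominated by the $\phi$-contribution $\rho^{1/2}/\delta$ (using $\delta\le\rho$), so the $\tau$-decay scale is the correct $\delta/\rho^{1/2}$ rather than a weaker $\delta^{1/2}$. All implicit constants depend only on $d$ and $N$, as required.
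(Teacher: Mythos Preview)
Your argument is correct and is essentially the ``routine integration by parts'' that the paper alludes to; the only difference is that the paper first rescales $\xi\to\sqrt\rho\,\xi$ (using the homogeneity of $\chi^\omega_l$) to reduce to the normalized case $\rho=1$, $\delta\sim l^2$, whereas you carry the parameters $\rho,\delta$ through the computation directly. The two are equivalent, and your careful tracking of the derivative scales (in particular the observation that the $\chi^\omega_l$--contribution $\delta^{-1/2}$ is dominated by the $\phi$--contribution $\rho^{1/2}/\delta$ in the $\tau$-direction, since $\delta\le\rho$) is exactly the content of that routine step.
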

\begin{proof}  By scaling $\xi\to \sqrt \rho\xi$  and $x\to \rho^{-1/2} x$, it is sufficient to show that  \[  |K^{\omega,l}_{1,l^2}(x)| \le Cl^{d+1}(1+l^{1}|x- (\omega\cdot x) \omega|+l^2 |\omega\cdot x| )^{-N}\]
with $C$ independent of $\phi$. 
And this can be  obtained by routine  integration by parts.
\end{proof}

Making use of a homogeneous  partition of unity which is given  by $\{\chi^\omega_l\}$ with $l\sim (\delta/\rho)^{1/2}$ and $\{\omega\}$ which is a $\sim (\delta/\rho)^{1/2}$ separated subset of $\mathbb S^{d-1}$   and Lemma \ref{Lemm:kernel}, one can easily obtain the following.  

\begin{lemma}\label{lem:kernel} For $0<\delta\ll 1$,  $\rho\ge 0$, and $\phi \in \CC_{N}([-1,1])$, let us set 
\[ K_{\rho,\delta}^\phi =\mathcal F^{-1}\Big(   \phi\big(\frac{|\xi|^2-\rho}{\delta}\big)\Big) .\]  
Then,  there exists a constant $C$, independent of $\phi$ (also $\rho$ and $\delta$), such that 
\[ |K_{\rho,\delta}^\phi(x)|  \le C 
\begin{cases}
\rho^{\frac{d-2}{2}} \delta(1+\rho^{-\frac12}\delta|x|)^{-N},  &\mbox{ for } \rho\ge C\delta \\
\delta^{\frac d2}(1+\delta^{\frac12}|x|)^{-N}, &\mbox{ for } \rho\le C\delta
\end{cases}
\] 
for some large  $C>1$.  In particular,  $ |K_{\rho,\delta}^\phi(x)|  \le C \delta (1+\delta|x|)^{-N}$ for  all $\rho\in [0,1]$ .
\end{lemma}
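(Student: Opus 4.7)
The natural split is at $\rho \sim \delta$. \emph{Case 1: $\rho \le C_0\delta$ for some large absolute $C_0$.} The multiplier $\phi((|\xi|^2-\rho)/\delta)$ is then supported in the ball $\{|\xi| \le C\delta^{1/2}\}$, which gives the trivial estimate $|K^\phi_{\rho,\delta}(x)| \le C\delta^{d/2}$. The chain rule yields $|\partial^\alpha_\xi \phi((|\xi|^2-\rho)/\delta)| \le C_\alpha \delta^{-|\alpha|/2}$ on the support (each derivative brings down a factor $|\xi|/\delta \lesssim \delta^{-1/2}$), so integrating by parts $N$ times in the direction $x/|x|$ produces the decay factor $(\delta^{1/2}|x|)^{-N}$; combining gives $|K^\phi_{\rho,\delta}(x)| \le C\delta^{d/2}(1+\delta^{1/2}|x|)^{-N}$, which is the second claimed bound.

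\emph{Case 2: $\rho \ge C_0\delta$.} The multiplier now lives on the annulus $|\xi|\sim\sqrt\rho$ of radial thickness $\sim\delta/\sqrt\rho$, so the natural angular scale is $l = (\delta/\rho)^{1/2}$. I would take an $l$-net $\Omega \subset \mathbb S^{d-1}$ of cardinality $\sim l^{-(d-1)} = (\rho/\delta)^{(d-1)/2}$ together with a subordinate homogeneous partition of unity $\{\chi^\omega_l\}_{\omega\in\Omega}$ satisfying the hypotheses of Lemma \ref{Lemm:kernel}. Then $K^\phi_{\rho,\delta}(x) = \sum_{\omega\in\Omega} K^{\omega,l}_{\rho,\delta}(x)$, and Lemma \ref{Lemm:kernel} bounds each summand by $C\rho^{-1/2}\delta^{(d+1)/2}(1 + a|\sin\theta_\omega| + b|\cos\theta_\omega|)^{-N}$, where $\theta_\omega$ is the angle between $\omega$ and $x/|x|$, $a = \delta^{1/2}|x|$, and $b = \rho^{-1/2}\delta|x|$. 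Since $\rho \ge C_0\delta$ forces $a/b = (\rho/\delta)^{1/2} \ge 1$, a split in $\omega$ according to $|\cos\theta_\omega|\ge 1/2$ versus $|\sin\theta_\omega|\ge 1/2$ (using $a\ge b$ in the latter) yields the uniform pointwise estimate $(1 + a|\sin\theta_\omega| + b|\cos\theta_\omega|)^{-N} \le C(1+b)^{-N}$. Summing over $\omega$ thus picks up only the cardinality factor $|\Omega|$, and the resulting product
\[
\rho^{-1/2}\delta^{(d+1)/2} \cdot (\rho/\delta)^{(d-1)/2} \cdot (1+b)^{-N} = \rho^{(d-2)/2}\delta(1+b)^{-N}
\]
is precisely the first claimed bound.

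\emph{Uniform bound and main obstacle.} For the ``in particular'' assertion with $\rho\in[0,1]$ and $d\ge 2$, the elementary inequalities $\rho^{(d-2)/2} \le 1$ and $\rho^{-1/2}\ge 1$ reduce the Case 2 bound to $C\delta(1+\delta|x|)^{-N}$, while $\delta^{d/2}\le\delta$ and $\delta^{1/2}\ge\delta$ (valid since $d\ge 2$ and $\delta\le 1$) do the same for Case 1. The substantive content of the argument is encapsulated in Lemma \ref{Lemm:kernel}; the only subtlety in the present proof is the summation over $\omega$ in Case 2, where the crucial observation is that $a \ge b$ makes the angular weight dominated by $(1+b)^{-N}$ uniformly in $\omega$, so no extra decay from angular separation is needed.
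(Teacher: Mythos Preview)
Your proof is correct and follows exactly the approach the paper indicates: the paper merely remarks that the lemma follows from Lemma~\ref{Lemm:kernel} via a homogeneous partition of unity $\{\chi^\omega_l\}$ with $l\sim(\delta/\rho)^{1/2}$ and an $l$-separated set $\{\omega\}\subset\mathbb S^{d-1}$, leaving all details to the reader; you have supplied those details faithfully. One small remark: in Case~1 your chain-rule count is a bit informal (higher derivatives also produce terms with $1/\delta$ from differentiating the linear factor $2\xi_j/\delta$), but the claimed bound $|\partial^\alpha_\xi m|\lesssim\delta^{-|\alpha|/2}$ is nonetheless correct on the support, as one sees cleanly by the substitution $\eta=\xi/\delta^{1/2}$, after which the rescaled multiplier $\phi(|\eta|^2-\rho/\delta)$ has uniformly bounded $C^N$ norm on a fixed ball.
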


\subsection{Discretized square function}
For  a compactly supported  smooth function $\phi$ and $0<\delta\ll 1,$ we define a discrete square functions $\mathfrak{D}_{\delta}^\phi$  by
\begin{equation}\label{two}
\mathfrak{D}_{\delta}^\phi f(x) =\Big(\sum_{\rho\in \delta\Z\cap [1/2,1]}|S_{\rho,\delta}^\phi f(x)|^2\Big)^{1/2},
\end{equation}
and let $\mathfrak{S}^\phi_\delta$ be defined by \eqref{lo-square}. In what follows we show that, for $p\ge 2$, $L^p$-boundedness properties of these two square functions are essentially equivalent.

\begin{lemma}\label{lem:sq1} Let $1\le p\le \infty$, $N$ be a positive integer, and $0<\delta\le \delta_0 \le 1/8$.  Suppose that 
\Be \label{sqsq}\Big\| \Big( \int_{\frac12}^2 |S_{t,\delta}^\phi f(x)|^2dt\Big)^{1/2}
\Big\|_{p}\le A  \|f\|_p
\Ee 
holds with $A$ independent of $\phi$ whenever $\phi\in \CC_N([-1,1])$.  Then $\|\mathfrak{D}^\phi_\delta  f \|_{p}\le 2\delta^{-\frac12}A \|f\|_p$ holds  
whenever  $\phi\in \CC_{N+1}([-1,1])$.
\end{lemma}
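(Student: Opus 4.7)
The plan is to reduce the discrete square function $\mathfrak{D}_\delta^\phi$ to the continuous square function in \eqref{sqsq} by exploiting that $S_{\rho,\delta}^\phi f(x)$ varies slowly in $\rho$ on the scale $\delta$. The quantitative tool is the identity
\[ \frac{d}{ds} S_{s,\delta}^\phi f = -\frac{1}{\delta} S_{s,\delta}^{\phi'} f, \]
obtained by differentiating the multiplier $\phi((|\xi|^2-s)/\delta)$ in $s$. Combined with the fundamental theorem of calculus, this yields, for $t\in I_\rho:=[\rho,\rho+\delta]$,
\[ S_{\rho,\delta}^\phi f(x)-S_{t,\delta}^\phi f(x) = -\frac{1}{\delta}\int_t^\rho S_{s,\delta}^{\phi'} f(x)\, ds. \]

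Starting from here, I would first apply Cauchy--Schwarz to the $s$-integral (noting $|t-\rho|\le\delta$) to get
\[ \bigl|S_{\rho,\delta}^\phi f(x)-S_{t,\delta}^\phi f(x)\bigr| \le \delta^{-1/2} B_\rho(x), \qquad B_\rho(x):=\Bigl(\int_{I_\rho} |S_{s,\delta}^{\phi'} f(x)|^2\, ds\Bigr)^{1/2}, \]
and then average in $t$ over $I_\rho$ (using Cauchy--Schwarz once more to pass from the $L^1$-average to the $L^2$-average) to arrive at the clean pointwise bound
\[ |S_{\rho,\delta}^\phi f(x)| \le \delta^{-1/2}\bigl(A_\rho(x)+B_\rho(x)\bigr), \qquad A_\rho(x):=\Bigl(\int_{I_\rho} |S_{t,\delta}^\phi f(x)|^2\, dt\Bigr)^{1/2}. \]

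Taking the $\ell^2$ sum over $\rho\in\delta\Z\cap[1/2,1]$ and applying Minkowski's inequality then gives
\[ \mathfrak{D}_\delta^\phi f(x) \le \delta^{-1/2}\Bigl[\Bigl(\sum_\rho A_\rho(x)^2\Bigr)^{1/2}+\Bigl(\sum_\rho B_\rho(x)^2\Bigr)^{1/2}\Bigr]. \]
Since the intervals $I_\rho$ are pairwise disjoint and their union is contained in $[1/2,2]$ (here one uses $\delta\le\delta_0\le 1/8$), both $\sum_\rho A_\rho(x)^2$ and $\sum_\rho B_\rho(x)^2$ are dominated by $\int_{1/2}^2 |S_{t,\delta}^\phi f(x)|^2\, dt$ and $\int_{1/2}^2 |S_{t,\delta}^{\phi'} f(x)|^2\, dt$, respectively. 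I would now take $L^p$-norms and apply hypothesis \eqref{sqsq} to both $\phi$ and $\phi'$. The first is fine since $\mathcal{C}_{N+1}([-1,1])\subset \mathcal{C}_N([-1,1])$; the second uses that $\phi\in\mathcal{C}_{N+1}([-1,1])$ forces $\supp\phi'\subset[-1,1]$ and $|(\phi')^{(k)}|=|\phi^{(k+1)}|\le 1$ for $k=0,\dots,N$, i.e.\ $\phi'\in\mathcal{C}_N([-1,1])$. This delivers $\|\mathfrak{D}_\delta^\phi f\|_p\le 2\delta^{-1/2}A\|f\|_p$.

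There is no deep obstacle; the only point requiring care is getting the constant $2$ rather than $2\sqrt{2}$. The standard move of squaring pointwise and summing via $(a+b)^2\le 2a^2+2b^2$ loses a factor of $\sqrt{2}$, so one must instead keep the linear pointwise bound $|S_{\rho,\delta}^\phi f|\le \delta^{-1/2}(A_\rho+B_\rho)$ and apply Minkowski \emph{after} the $\ell^2$-summation, which separates the $\phi$ and $\phi'$ contributions before invoking \eqref{sqsq}.
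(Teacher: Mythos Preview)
Your proof is correct and follows essentially the same approach as the paper: both use the fundamental theorem of calculus to write $S_{\rho,\delta}^\phi f$ in terms of an average of $S_{t,\delta}^\phi f$ over $t\in[\rho,\rho+\delta]$ plus a remainder involving $S_{s,\delta}^{\phi'}f$, apply Cauchy--Schwarz and Minkowski to separate the two contributions, assemble the disjoint intervals $[\rho,\rho+\delta]$ into $[1/2,2]$, and invoke the hypothesis for both $\phi$ and $\phi'\in\CC_N([-1,1])$. The only cosmetic difference is that the paper first averages in $t$ and then applies Cauchy--Schwarz to the resulting double integral, whereas you apply Cauchy--Schwarz to the $s$-integral before averaging; the outcome is the same pointwise bound $\mathfrak D_\delta^\phi f\le \delta^{-1/2}(\mathfrak S_\delta^\phi f+\mathfrak S_\delta^{\phi'} f)$.
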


\newcommand{\fS}{\mathfrak S}

\begin{proof} We fix $\phi\in \CC_{N+1}([-1,1])$. From the fundamental theorem of calculus  we have
\begin{align*}
\phi\Big(\frac{|\xi|^2-\rho}\delta\Big)
&=\phi\Big(\frac{|\xi|^2-\rho-t}\delta\Big)+\delta^{-1}\int_0^t \phi'\Big(\frac{|\xi|^2-\rho-\tau}\delta\Big) d\tau.
\end{align*}
Thus  $S^\phi_{\rho,\delta}f= S^\phi_{\rho+t,\delta}f +\delta^{-1}\int_0^t S_{\rho+\tau,\delta}^{\phi'}f d\tau$.  Using this and taking additional integration in $t$ over $[0,\delta]$ give
\[S^\phi_{\rho,\delta}f(x)=\delta^{-1} \int_0^\delta S^\phi_{\rho+t,\delta}f(x) dt  +\delta^{-2}\int_0^\delta \int_0^t S_{\rho+\tau,\delta}^{\phi'}f(x) d\tau d t.\]
By  Cauchy-Schwarz  and  triangle inequalities, we have
\begin{equation}\label{eq:point}
\mathfrak{D}^\phi_\delta f(x)
\le\delta^{-1/2}\Big(\CI_1+  \delta^{-1}\CI_2\Big), 
\end{equation}
where
\begin{align*}
\CI_1=\Big(\!\!\!\sum_{\rho\in\delta\Z\cap[\frac12,1]}\! \int_0^\delta|S^\phi_{\rho+t,\delta}f(x)|^2dt  \Big)^{\frac12}, \,\,
\CI_2=\Big(\!\!\!\sum_{\rho\in\delta\Z\cap[\frac12,1]}\int_0^\delta\Big|\int_0^t S^{\phi'}_{\rho+\tau,\delta}f(x)d\tau\Big|^2dt  \Big)^{\frac12}.
\end{align*}
Then, it is clear that 
\[\CI_1=\Big(\sum_{\rho\in\delta\Z\cap[1/2,1]}\int_\rho^{\rho+\delta} |S^\phi_{t,\delta}f(x)|^2dt\Big)^{\frac12}  \le \Big(\int_{1/2}^2|S^\phi_{t,\delta}f(x)|^2dt\Big)^{\frac12}.\]
Applying H\"older's inequality to the inner integral of $\CI_2$ yields
\[
\CI_2\le  \Big(\sum_{\rho\in\delta\Z\cap[1/2,1]}\int_0^\delta t\int_0^\delta |S^{\phi'}_{\rho+\tau,\delta}f(x)|^2 d\tau dt\Big)^{1/2}\le  \delta\Big(\int_{1/2}^2|S^{\phi'}_{t,\delta}f(x)|^2dt\Big)^{1/2}.
\]
Thus, combinning this with \eqref{eq:point} 
we have $\mathfrak{D}^\phi_\delta f(x) \le \delta^{-1/2}(\fS^\phi_\delta f(x)+\fS^{\phi'}_\delta f(x)).$ Since $\phi,\phi'\in \CC_N([-1,1])$,
\[ \|\mathfrak{D}^\phi_\delta f\|_p\le \delta^{-\frac12} (\|\fS^\phi_\delta f\|_p+\|\fS^{\phi'}_\delta f\|_p)\le 2A \delta^{-\frac12}\|f\|_p.\]
This completes the proof.
\end{proof}

The implication in  Lemma \ref{lem:sq1} is reversible for a certain range of $p$.  We record the following  lemma  even though we do not use it in this paper. 
 
\begin{lemma}\label{lem:sq2} Let $2\le p\le \infty$, $N$ be a positive integer, and $0<\delta\le \delta_0 \le 1/8$. Suppose that $\|\mathfrak{D}^\phi_\delta  f \|_{p}\le A \|f\|_p$ holds with $A$ independent of $\phi$ whenever  $\phi\in \CC_{N}([-1,1])$.  Then, there is a constant $C$, {independent of $\delta$ and $\phi$}, such that $\|\mathfrak{S}^\phi_\delta f\|_p\le CA\delta^{1/2}\|f\|_p$ holds for all $\phi \in \CC_N([-1/2,1/2])$. 
\end{lemma}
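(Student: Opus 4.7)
The plan is to discretize the $t$-integral in the definition of $\mathfrak{S}^\phi_\delta$ along the grid $\delta\Z$, viewing a shift of the integration variable $t$ as a horizontal translation of the profile $\phi$. The key structural observation is that although the conclusion assumes only $\phi\in\CC_N([-1/2,1/2])$, the hypothesis concerns the wider class $\CC_N([-1,1])$; the extra $1/2$-unit of room on either side of the support of $\phi$ is precisely what will absorb horizontal shifts of size up to $\delta/2$ in $t$ without leaving the symbol class.

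Concretely, for $t=\rho+s$ with $\rho\in\delta\Z$ and $|s|\le\delta/2$, set $\phi_s(u):=\phi(u-s/\delta)$. A direct inspection shows $S^\phi_{\rho+s,\delta}f=S^{\phi_s}_{\rho,\delta}f$, and since $\phi$ is supported in $[-1/2,1/2]$ and $|s/\delta|\le 1/2$, the shifted profile $\phi_s$ is supported in $[-1,1]$ with $\|\phi_s^{(k)}\|_\infty=\|\phi^{(k)}\|_\infty\le 1$ for $k=0,\dots,N$; hence $\phi_s\in\CC_N([-1,1])$ uniformly in $s$. Partitioning the $t$-integral into cells of length $\delta$ centered at grid points $\rho\in\delta\Z$ and changing variables to $s=t-\rho$ then yields
\[
|\mathfrak{S}^\phi_\delta f(x)|^2 \;\le\; \int_{-\delta/2}^{\delta/2}\,\sum_{\rho\in\delta\Z\cap[1/2,2]} |S^{\phi_s}_{\rho,\delta}f(x)|^2\,ds.
\]
Since $p\ge 2$, Minkowski's inequality applied in $L^{p/2}$ gives
\[
\|\mathfrak{S}^\phi_\delta f\|_p^2 \;\le\; \int_{-\delta/2}^{\delta/2}\|\mathfrak{D}^{\phi_s}_\delta f\|_p^2\,ds \;\le\; \delta A^2\|f\|_p^2,
\]
which is the desired bound $\|\mathfrak{S}^\phi_\delta f\|_p\le A\sqrt{\delta}\,\|f\|_p$.

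The only technical subtlety is the mismatch between the $t$-range $[1/2,2]$ appearing in $\mathfrak{S}^\phi_\delta$ and the $\rho$-range $[1/2,1]$ in the definition \eqref{two} of $\mathfrak{D}^\phi_\delta$: the sum after discretization really runs over (essentially) $\rho\in\delta\Z\cap[1/2,2]$. This is harmless. The contribution from $\rho\in\delta\Z\cap[1,2]$ is handled by the parabolic rescaling $f\mapsto 2^{-d/2}f(\cdot/\sqrt2)$, which reduces the corresponding discrete sum at scale $\delta$ to the hypothesis applied at scale $\delta/2\le\delta_0$ over $\rho'\in(\delta/2)\Z\cap[1/2,1]$; a direct check shows that the Jacobian factors produced by the dilation cancel so that the constant is preserved. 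Consequently I anticipate no serious obstacle: the heart of the argument is the clean Minkowski interchange above, enabled by the translation-invariance $\phi_s\in\CC_N([-1,1])$ of the hypothesis class under horizontal shifts of size $\le 1/2$.
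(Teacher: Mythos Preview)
Your proof is correct and follows essentially the same route as the paper: the translation identity $S^\phi_{\rho+s,\delta}=S^{\phi_s}_{\rho,\delta}$ with $\phi_s\in\CC_N([-1,1])$, followed by Minkowski in $L^{p/2}$, is exactly the core of the paper's argument. The only cosmetic difference is the order in which the range mismatch is handled: the paper first decomposes $[1/2,2]$ into finitely many subintervals and uses the parabolic scaling \eqref{scale-scale} to reduce to $t\in[5/8,7/8]\subset[1/2,1]$ \emph{before} discretizing, whereas you discretize over all of $[1/2,2]$ first and then rescale the $\rho\in[1,2]$ block afterward; both orderings work and produce the same constant.
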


Decomposing $\phi$ into functions supported in smaller intervals we may replace the interval $[-1/2,1/2]$ with $[-1,1]$.

\begin{proof}
Let $\phi \in \CC_N([-1/2,1/2])$.  To begin with, observe that 
\begin{equation}
\label{scale-scale}
S_{\rho,\delta}^\phi f(x)=   S_{\lambda^2\rho,  \lambda^2\delta }^\phi \big(f( \lambda \cdot)\big)(\lambda^{-1} x)
\end{equation}
Thus decomposing the interval $[1/2, 1]$ into finite subintervals and using the above rescaling identity it is sufficient to show that 
\[\Big\| \Big( \int_{\frac 58}^{\frac 78}|S_{t,\delta}^\phi f(x)|^2dt\Big)^{1/2}
\Big\|_{p} \le  A\delta^{\frac12} \|f\|_p. \] 

Since $\delta<1/8$ we note that  
\[ \int^{\frac78}_{\frac58}|S^\phi_{t,\delta}f(x)|^2dt\le \int_{-\delta/2}^{\delta/2}\sum_{\rho\in\delta\Z\cap[1/2,1]}|S^\phi_{\rho+t}f(x)|^2dt.\]
 For $|t|\le \delta/2,$ set $\psi_t(s)=\phi(s-\frac{t}\delta)$. Then we see $\psi_t\in \CC_N([-1,1])$ and $S^{\phi}_{\rho+t,\delta}f(x)=S^{\psi_t}_{\rho,\delta}f(x).$ Hence
\[   \int^{\frac78}_{\frac58}|S^\phi_{t,\delta}f(x)|^2dt\le  \int_{-\delta/2}^{\delta/2} \sum_{\rho\in \delta\Z\cap[1/2,1]} |S_{\rho,\delta}^{\psi_t}f(x)|^2 dt.\]
Since $p\ge 2$  and $\psi_t\in \CC_N([-1,1])$, by Minkowski's inequality and the assumption, we have
\begin{align*}
\Big\| \Big( \int^{\frac78}_{\frac58} |S_{t,\delta}^\phi f(x)|^2dt\Big)^{1/2}
\Big\|_{p} \le \Big( \int_{-\delta/2}^{\delta/2} \| \mathfrak{D}_{\rho,\delta}^{\psi_t}f\|_p^2dt\Big)^{1/2}
\le A\Big(\int_{-\delta/2}^{\delta/2}\|f\|_p^2dt\Big)^{1/2}.
\end{align*}
This gives the desired bound. 
\end{proof}

\subsection{Estimates for $\mathfrak{S}^\phi_\delta$}\label{sub:sq}   
Let $I=[-1,1]$ and set $\CE(N)$ to be a class of smooth functions $\eta\in C^\infty(I^d\times I)$ satisfying $\|\eta\|_{C^N(I^d\times I)}\le 1$ and $1/2\le \eta\le 1.$ We denote by $\mathfrak{E}(\epsilon_0,N)$ the class of smooth functions defined on $I^{d-1}\times I$ which satisfy
\[\| \psi -\psi_0-t\|_{C^N(I^{d-1}\times I)}\le \epsilon_0,\]
where $\psi_0(\zeta)=|\zeta|^2/2$ for $\zeta\in I^{d-1}.$ We now recall the following from \cite{lee1}.

\begin{prop}\cite[Proposition 3.2]{lee1}\label{prop:lee} Let $\phi $ be a smooth function supported in $[-1,1]$. If $p> \min\{p_0(d),\frac{2(d+2)}d\}$ and $\epsilon_0$ is sufficiently small, then for $\epsilon>0$ there is a positive integer $M=M(\epsilon)$ such that 
\begin{equation}\label{eq:lee}\Big\|\Big(\int_{-1}^1\Big| \phi\Big(\frac{\eta(D,t)(D_d-\psi(D',t))}\delta\Big) f\Big|^2dt\Big)^{1/2}\Big\|_p \le B\delta^{-\frac{d-2}2+\frac dp-\epsilon}\|f\|_p
\end{equation}
holds uniformly for $\psi\in \mathfrak{E}(\epsilon_0,M)$ and $\eta\in \CE(M)$ whenever $\supp\widehat{f}\subset  [-1/2,1/2]^{d}$. Here, we denote by $m(D)f$ the multiplier operator given by $\CF(m(D)f)(\xi)=m(\xi)\widehat{f}(\xi)$ and also write $D=(D',D_d)$ where $D', D_d$ correspond to the frequency variables $\xi',\xi_d$, respectively, where $\xi=(\xi',\xi_d)\in \R^{d-1}\times\R.$ 
 
\end{prop}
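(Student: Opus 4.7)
The plan is to reduce the square–function bound to a variable–coefficient extension / restriction estimate, and then to obtain the latter by combining wave–packet decomposition with the bilinear (Tao) and multilinear (Bennett–Carbery–Tao) restriction theorems in the Bourgain–Guth framework.

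First, I would rewrite the operator as a family of frequency localizations. Since $\eta$ is bounded above and below, for each fixed $t$ the multiplier $\phi(\eta(\xi,t)(\xi_d-\psi(\xi',t))/\delta)$ localizes $\xi$ to a $\delta$–slab around the hypersurface $\Sigma_t=\{\xi_d=\psi(\xi',t)\}$. Because $\psi$ is close to the standard paraboloid $\psi_0(\zeta)=|\zeta|^2/2$ in $C^M$, the family $\{\Sigma_t\}_{t\in[-1,1]}$ foliates a fixed curved region. Writing $\phi$ via its Fourier expansion or a dyadic partition, and using Parseval on the $t$–integral, the square function is essentially an $L^2(dt)$ superposition of extension operators associated with the variable phase $\psi$. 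After a parabolic rescaling $(\xi',\xi_d)\mapsto(\delta^{1/2}\xi',\delta\xi_d)$, the problem is moved to unit scale, the loss $\delta^{-(d-2)/2+d/p}$ being produced automatically by Jacobian and dual rescaling factors. This is the reduction used in \cite{l,lee1,LRV}.

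Second, I would perform a wave–packet decomposition adapted to the rescaled scale $1$. Cover the relevant piece of $\Sigma_t$ by $\sim 1$ caps $\tau$ of the frequency space and partition physical space into tubes $T$ of length $1$ dual to each cap. For each $t$, the rescaled operator decomposes as $\sum_{\tau,T} c_{\tau,T}(t)\varphi_{\tau,T}$ with almost–orthogonal wave packets $\varphi_{\tau,T}$. Because $\psi\in\mathfrak{E}(\epsilon_0,M)$ with $\epsilon_0$ small, the directions of the tubes vary continuously with $t$ and the same wave–packet library can be used uniformly on short $t$–intervals, so the square function in $t$ reduces to estimating $(\sum_{T}|\sum_\tau c_{\tau,T}\varphi_{\tau,T}|^2)^{1/2}$ up to admissible losses.

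Third, I would apply a broad/narrow dichotomy à la Bourgain–Guth. In the \emph{broad} case, where the significant caps are quantitatively transverse, the bilinear restriction estimate of Tao gives the range $p>2(d+2)/d$, while the multilinear estimate of Bennett–Carbery–Tao, optimized by Bourgain–Guth, pushes the range to $p>p_0(d)$; these are precisely the thresholds combined in $\min\{p_0(d),2(d+2)/d\}$. In the \emph{narrow} case, all significant caps lie in a small sub–cap, and one rescales into that sub–cap and iterates. The iteration produces an arbitrarily small loss $\delta^{-\epsilon}$, matched to the choice of $M=M(\epsilon)$ derivatives needed to justify the decomposition. The uniformity in $\phi\in\CC_N([-1,1])$, in $\eta\in\CE(M)$, and in $\psi\in\mathfrak{E}(\epsilon_0,M)$ follows because each step in the Whitney and wave–packet decomposition only uses finitely many $C^M$ norms.

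The principal obstacle is the variable–coefficient character of the estimate: the hypersurface $\Sigma_t$ depends on $t$, so one needs bilinear/multilinear restriction for \emph{perturbed} paraboloids, together with Kakeya–type transversality for tubes whose directions depend on $t$. This is what forces the smallness of $\epsilon_0$ and the high regularity $M$: the tube families associated with $\psi$ and with $\psi_0$ must be comparable in a quantitative sense so that the standard (constant–coefficient) multilinear restriction and Kakeya estimates can be transferred, via the Lee \cite{l} / Bourgain–Guth \cite{b-g} framework, to the variable–coefficient setting. Once this transfer is in place, the square function in $t$ is handled by Minkowski's inequality and the fact that for $p\ge 2$ the $\ell^2$–sum of the single–slice estimates is itself a single–slice estimate, concluding the proof.
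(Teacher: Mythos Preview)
The paper does not prove this proposition at all: it is quoted verbatim from \cite{lee1} (Lee, \emph{Square function estimates for the Bochner--Riesz means}), introduced with ``We now recall the following from \cite{lee1}'' and followed only by a remark on the dependence of the constant on the $C^N$ norm of $\phi$. So there is no ``paper's own proof'' to compare against; the present paper treats the statement as a black box and uses it to derive Lemma~\ref{lem:lee'} and Proposition~\ref{prop:main}.

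That said, your outline is a fair description of the strategy actually carried out in \cite{lee1}: rescaling to unit scale, wave--packet decomposition adapted to the perturbed paraboloid, and the Bourgain--Guth broad/narrow induction using Tao's bilinear restriction for $p>2(d+2)/d$ and the Bennett--Carbery--Tao multilinear estimate for $p>p_0(d)$. The one place where your sketch is too glib is the handling of the $t$--dependence. You write that ``the square function in $t$ is handled by Minkowski's inequality'' and that ``the same wave--packet library can be used uniformly on short $t$--intervals,'' but the genuine difficulty in \cite{lee1} is precisely that a square function in $t$ is strictly harder than a single--$t$ estimate: one cannot simply take an $\ell^2$ sum of the slice bounds without losing a factor of $\delta^{-1/2}$. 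Lee's argument requires a more careful organization of the induction so that the $L^2_t$ structure is preserved through the broad/narrow decomposition and the parabolic rescaling, and this is where the perturbation class $\mathfrak{E}(\epsilon_0,M)$ and the smallness of $\epsilon_0$ really enter. Your sketch identifies the right ingredients but does not explain how the square function survives the iteration; that is the main technical content of \cite{lee1}.
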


It is not difficult  to see that the  constant $B$ in \eqref{eq:lee} only depends on the $C^N$-norm of $\phi$ for some $N$  large enough, hence one can find $N=N(\epsilon)$, $\epsilon>0$, such that \eqref{eq:lee} holds uniformly for $\psi\in \mathfrak{E}(\epsilon_0,N)$, $\eta\in \CE(N)$, and $\phi\in \CC_N([-1,1])$. In fact, $C^N$-norm is involved with kernel estimate which is needed for localization argument and $N$ can be taken to be as large as $\sim d$. 
 As mentioned  in Remark 3.3 in \cite{lee1}, Proposition \ref{prop:lee} implies  the following.
 
 \begin{lemma}\label{lem:lee'}  For any $\epsilon>0$, there is an $N$ such that   \eqref{del-square}  holds uniformly for all $\phi\in\CC_N([-1,1])$, if $p>p_s(d)=\min\{ p_0(d),\frac{2(d+2)}d\}$.
 \end{lemma}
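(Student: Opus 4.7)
The plan is to deduce the estimate \eqref{del-square} for the sphere-based square function $\mathfrak{S}_\delta^\phi$ from the more general paraboloid-type square function bound of Proposition \ref{prop:lee}. The sphere $\{|\xi|^2 = t\}$ has non-vanishing Gaussian curvature, so a small piece of it can be locally represented as a graph over a paraboloid via an affine change of variables, which is the mechanism of the reduction.

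First, I would decompose both the frequency variable $\xi$ and the parameter $t$ by a finite smooth partition of unity. Write $[1/2, 2] = \bigcup_k I_k$ as a union of short intervals centered at $t_k$, and cover the annulus $\{1/4 \le |\xi|^2 \le 4\}$ by conical caps $\Gamma_j$ of small angular width around directions $e_j \in \mathbb{S}^{d-1}$. The number of pieces is independent of $\delta$. Using Minkowski's inequality in $L^{p/2}$ (valid since $p \ge 2$) together with the triangle inequality, it suffices to prove the claimed bound for each localized square function
\[ \mathfrak{S}^{j,k}_{\delta, \phi} f(x) = \Bigl(\int_{I_k} \Bigl| \phi\Bigl(\frac{|D|^2 - t}{\delta}\Bigr) P_j f(x) \Bigr|^2 dt \Bigr)^{1/2}, \]
where $P_j$ is frequency localization to $\Gamma_j$.

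Second, fix a single piece and, by rotational invariance of the $L^p$-norm, assume $e_j = e_d$. In $\Gamma_j$, $\xi_d$ is bounded away from zero, hence
\[ |\xi|^2 - t = \bigl(\xi_d + \sqrt{t - |\xi'|^2}\bigr)\bigl(\xi_d - \sqrt{t - |\xi'|^2}\bigr). \]
Introduce rescaled variables $\xi' = \mu \tilde\xi'$, $\xi_d = \sqrt{t_k} - \lambda \tilde\xi_d$, $t = t_k - 2\lambda \sqrt{t_k}\, \tilde t$, with $\mu^2 = \lambda \sqrt{t_k}$ and $\lambda > 0$ a small constant. A Taylor expansion of $\sqrt{t - |\xi'|^2}$ around $(\tilde\xi', \tilde t) = 0$ yields
\[ \frac{|\xi|^2 - t}{\delta} = \frac{\tilde\eta(\tilde\xi, \tilde t)\bigl(\tilde\xi_d - \tilde\psi(\tilde\xi', \tilde t)\bigr)}{\tilde\delta}, \]
where $\tilde \delta = \delta/(2\lambda\sqrt{t_k}) \sim \delta$, the factor $\tilde\eta$ is smooth with $\tilde\eta \sim 1$ (replacing $\phi$ by $\phi(-\cdot) \in \CC_N$ if necessary so that $\tilde\eta \in \CE(N)$), and
\[ \tilde\psi(\tilde\xi', \tilde t) = \tilde t + \tfrac{1}{2}|\tilde\xi'|^2 + O_{C^N}(\lambda + \mu + |I_k|). \]
Taking $\lambda$, $\mu$, and $|I_k|$ sufficiently small (yet independent of $\delta$) arranges $\tilde\psi \in \mathfrak{E}(\epsilon_0, N)$ for the parameters required by Proposition \ref{prop:lee}.

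Finally, I apply Proposition \ref{prop:lee} to the rescaled operator, which acts on $\tilde f$ with $\widehat{\tilde f}$ supported in $[-1/2, 1/2]^d$ by choice of cap width, to obtain the bound $B\,\tilde\delta^{-(d-2)/2 + d/p - \epsilon}\|\tilde f\|_p \sim \delta^{-(d-2)/2 + d/p - \epsilon}\|\tilde f\|_p$ whenever $p > p_s(d)$. Since an affine change of frequency variables acts as an isometry on $L^p$-multiplier norms (the Jacobian from the physical-space dilation of the input cancels against the Jacobian of the output norm), this transfers back to $\|\mathfrak{S}^{j,k}_{\delta, \phi} f\|_p \lesssim \delta^{-(d-2)/2 + d/p - \epsilon}\|f\|_p$, and summing over the bounded number of pieces yields \eqref{del-square}. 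The main technical point is controlling the $C^N$-norm of the Taylor remainder so that $\tilde\psi$ lies in $\mathfrak{E}(\epsilon_0, N)$ and $\tilde\eta \in \CE(N)$; this is routine once $\lambda$, $\mu$, and $|I_k|$ are tuned.
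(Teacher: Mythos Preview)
Your argument is correct and follows essentially the same route as the paper: a finite angular and $t$-decomposition, rotation to the $e_d$ direction, the factorization $|\xi|^2-t=(\xi_d+\sqrt{t-|\xi'|^2})(\xi_d-\sqrt{t-|\xi'|^2})$, and then an affine rescaling that puts the phase into the form $\eta(\xi,t)(\xi_d-\psi(\xi',t))/\tilde\delta$ with $\psi\in\mathfrak{E}(\epsilon_0,N)$ and $\eta\in\CE(N)$, so that Proposition~\ref{prop:lee} applies. The paper is slightly terser (it defers the explicit change of variables to Remark~3.3 of \cite{lee1} and uses the $t^2$ parametrization), but your version spells out the same mechanism.
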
 
 
 In fact, let $\epsilon_0>0$ be sufficiently small. By finite decompositions, rotation, scaling, and change of variable, it suffices to prove that 
\[ \Big\|\Big(\int_{I_{\epsilon_0}}\Big|\phi\Big(\frac{t^2-|D|^2}\delta\Big) f\Big|^2dt\Big)^{1/2}\Big\|_p\le C\delta^{\frac{2-d}2+\frac dp-\epsilon}\|f\|_p,\quad \forall \supp \widehat{f}\subset B(-e_d,c\epsilon_0^2),\]
for $I_{\epsilon_0}=(1-{\epsilon_0}^2,1+{\epsilon_0}^2)$ and $e_d=(0,\cdots,0,1).$ 
Note that $t^2-|\xi|^2= -(\tau +\sqrt{t^2-|\zeta|^2})(\tau-\sqrt{t^2-|\zeta|^2})$ for $\xi=(\zeta,\tau)\in B(-e_d,c{\epsilon_0}^2)$. Here 
$(\zeta,\tau)=\mathbb R^{d-1}\times \mathbb R$. 
Then the simple change of variables in Remark 3.3 in \cite{lee1} transforms $\phi(\frac{t^2-|\xi|^2}\delta)$ to $\phi(\frac{2\eta(\xi,t)(\tau-\psi)}{\epsilon^{-2}_0\delta})$ for some $\psi\in \mathfrak{E}(C\epsilon^2_0,N)$ and $\eta\in \mathcal E(N)$. Applying Proposition \ref{prop:lee} we obtain Lemma \ref{lem:lee'}. 

Proposition \ref{prop:main} below follows from Lemma \ref{lem:lee'} and Lemma \ref{lem:sq1}.  

\begin{prop}\label{prop:main} Let  $0<\delta_0\ll 1$. Then, for $p>p_s(d)$ and  any $\epsilon>0$ there is $N=N(\epsilon)$ so that
\[ \|\mathfrak{D}^\phi_\delta f\|_p\le C \delta^{\frac{1-d}2+\frac dp -\epsilon}\|f\|_p\]
holds uniformly for $\phi\in\CC_N([-1,1])$ and $0<\delta\le \delta_0$.
\end{prop}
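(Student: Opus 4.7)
The plan is to combine the two preceding lemmas in a straightforward way and then simply check that the exponent arithmetic comes out right. First I would fix $p>p_s(d)$ and $\epsilon>0$, and invoke Lemma \ref{lem:lee'} to obtain a positive integer $N_0=N_0(\epsilon)$ such that the continuous square function estimate
\[
\|\mathfrak{S}^\phi_\delta f\|_p \le C\,\delta^{\frac{2-d}{2}+\frac{d}{p}-\epsilon}\,\|f\|_p
\]
holds uniformly for all $\phi\in \CC_{N_0}([-1,1])$ and all $0<\delta\le \delta_0$. This is precisely hypothesis \eqref{sqsq} of Lemma \ref{lem:sq1} with $A=C\delta^{(2-d)/2+d/p-\epsilon}$, once we recall that $\mathfrak{S}^\phi_\delta f(x)=(\int_{1/2}^2 |S^\phi_{t,\delta}f(x)|^2\,dt)^{1/2}$ by the definitions \eqref{eq:S} and \eqref{lo-square}.

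Next I would apply Lemma \ref{lem:sq1} with $N=N_0$. Setting $N(\epsilon):=N_0+1$, the conclusion of that lemma gives
\[
\|\mathfrak{D}^\phi_\delta f\|_p \le 2\delta^{-\frac12}\cdot C\,\delta^{\frac{2-d}{2}+\frac{d}{p}-\epsilon}\,\|f\|_p
\]
uniformly for $\phi\in \CC_{N(\epsilon)}([-1,1])$ and $0<\delta\le \delta_0$. The exponent collapses as
\[
-\tfrac{1}{2}+\tfrac{2-d}{2}+\tfrac{d}{p}-\epsilon \;=\; \tfrac{1-d}{2}+\tfrac{d}{p}-\epsilon,
\]
which is exactly the bound claimed in Proposition \ref{prop:main}.

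There is essentially no obstacle: the entire content of the proposition is the observation that Lemma \ref{lem:lee'} provides the continuous square function estimate needed to feed into the discretization result of Lemma \ref{lem:sq1}, and that the factor $\delta^{-1/2}$ lost in the passage from $\mathfrak{S}^\phi_\delta$ to $\mathfrak{D}^\phi_\delta$ accounts precisely for the difference between the two exponents $(2-d)/2+d/p$ and $(1-d)/2+d/p$. The only bookkeeping is to track that the required regularity $N$ increases by one in the application of Lemma \ref{lem:sq1}, so the final choice $N(\epsilon)=N_0(\epsilon)+1$ suffices.
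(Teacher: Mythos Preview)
Your proof is correct and follows exactly the approach indicated in the paper, which simply states that Proposition \ref{prop:main} follows from Lemma \ref{lem:lee'} and Lemma \ref{lem:sq1}. Your bookkeeping of the regularity index $N(\epsilon)=N_0(\epsilon)+1$ and the exponent arithmetic $-\tfrac12+\tfrac{2-d}{2}+\tfrac{d}{p}=\tfrac{1-d}{2}+\tfrac{d}{p}$ is precisely what is needed.
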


\subsection{$L^p-L^q$ estimates for  $\mathfrak{D}^\phi_\delta$} Note that the multiplier of $S_{\rho,\delta}^\phi$ in $\mathfrak{D}^\phi_\delta$ is supported in a $C\delta$-neighborhood of $\sqrt{\rho}$-sphere in $\R^d.$ Thus, by using Stein-Tomas theorem and well-known space localization argument we can obtain $L^p-L^q$ estimates for $\mathfrak{D}^\phi_\delta.$

\begin{prop}\label{prop:s-t}  Let $q\ge \frac{2(d+1)}{d-1}$ and $2\le p\le q$. Then for any $\epsilon>0$ there exists $N\in \mathbb{N}$ such that, for any $\phi\in \CC_{N}([-1,1])$ and $0<\delta\ll 1$, 
\[ 
\|\mathfrak{D}^\phi_\delta f\|_q\lesssim \delta^{\frac{1-d}2+\frac dp-\epsilon}\|f\|_p.
\]
{Here the implicit constant is independent of $\delta$ and $\phi$.}
\end{prop}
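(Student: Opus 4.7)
The plan is to combine the Stein--Tomas restriction theorem, which gives a sharp $L^2\to L^q$ bound on each individual piece $S^\phi_{\rho,\delta}$, with spatial localization at the scale $\delta^{-1}$ dictated by the kernel estimate in Lemma~\ref{lem:kernel}.

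First I would establish the $L^2$-endpoint
\[
\|\mathfrak{D}^\phi_\delta g\|_q \lesssim \delta^{1/2}\|g\|_2.
\]
Since $q\ge 2$, Minkowski's inequality gives $\|\mathfrak{D}^\phi_\delta g\|_q^2 \le \sum_\rho \|S^\phi_{\rho,\delta} g\|_q^2$. For each $\rho$, $\widehat{S^\phi_{\rho,\delta} g}$ is supported in a $C\delta$-neighborhood of the sphere of radius $\sqrt\rho$, so the thickened Stein--Tomas theorem (valid for $q=2(d+1)/(d-1)$), interpolated with the trivial bound $\|h\|_\infty\le \|\widehat h\|_1\lesssim \delta^{1/2}\|h\|_2$, gives $\|S^\phi_{\rho,\delta} g\|_q\lesssim \delta^{1/2}\|S^\phi_{\rho,\delta} g\|_2$ for all $q\ge 2(d+1)/(d-1)$. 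Plancherel and the essential disjointness of the annuli then yield $\sum_\rho \|S^\phi_{\rho,\delta} g\|_2^2\lesssim \|g\|_2^2$.

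Next I would spatially localize. Decompose $\R^d$ into disjoint cubes $\{Q_k\}$ of side $\delta^{-1}$, and fix a dilate $Q_k^*$. Lemma~\ref{lem:kernel} gives $|K^\phi_{\rho,\delta}(x-y)|\lesssim \delta(1+\delta|x-y|)^{-N}$ uniformly in $\rho\in[1/2,1]$, so each $S^\phi_{\rho,\delta}$ is essentially local at scale $\delta^{-1}$. Applying the $L^2$-endpoint to $f\chi_{Q_k^*}$ and H\"older (using $|Q_k^*|\sim\delta^{-d}$ and $p\ge2$),
\[
\|\mathfrak{D}^\phi_\delta(f\chi_{Q_k^*})\|_q \lesssim \delta^{1/2}\|f\chi_{Q_k^*}\|_2 \lesssim \delta^{(1-d)/2+d/p}\|f\chi_{Q_k^*}\|_p.
\]
For $x\in Q_k$, pointwise $\mathfrak{D}^\phi_\delta f(x)\le \mathfrak{D}^\phi_\delta(f\chi_{Q_k^*})(x)+\mathcal T_k(x)$, where $\mathcal T_k$ is the $\ell^2_\rho$-tail coming from $f\chi_{(Q_k^*)^c}$. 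Raising to the $q$-th power, integrating over $Q_k$, summing in $k$, using bounded overlap of $\{Q_k^*\}$ and the inequality $\sum_k a_k^{q/p}\le (\sum_k a_k)^{q/p}$ (valid since $q/p\ge1$), one bounds the main term by $\delta^{q((1-d)/2+d/p)}\|f\|_p^q$.

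The main obstacle is controlling the tail $\mathcal T_k$ with the same exponent up to an $\epsilon$. Since the kernel bound is uniform in $\rho$, a crude pointwise estimate for $\mathcal T_k$ would incur a factor $\delta^{-1/2}$ upon square-summing over the $\sim \delta^{-1}$ values of $\rho$. This is absorbed by choosing $N=N(d,p,q,\epsilon)$ sufficiently large for $\phi\in\CC_N([-1,1])$: the rapid decay $(1+\delta|x-y|)^{-N}$ (combined with Young's inequality and the constraint $|x-y|\gtrsim\delta^{-1}$ on the tail region) beats the $\delta^{-1/2}$ factor and any further polynomial losses, recovering the exponent $(1-d)/2+d/p$ up to the $-\epsilon$. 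This is the sole source of the $\epsilon$-loss and of the dependence $N=N(\epsilon)$, and the overall strategy is a standard space-localization argument in the Bochner--Riesz literature.
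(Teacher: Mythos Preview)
Your overall strategy matches the paper's, but there is a real gap in the tail estimate stemming from the localization scale. You decompose into cubes $Q_k$ of side $\delta^{-1}$ with a \emph{constant} dilate $Q_k^*$. On the tail region one then only has $|x-y|\gtrsim \delta^{-1}$, so the kernel bound from Lemma~\ref{lem:kernel} gives merely $(1+\delta|x-y|)^{-N}\le (1+c)^{-N}$, a constant depending on $N$ but carrying no power of $\delta$. Increasing $N$ therefore cannot beat the $\delta^{-1/2}$ loss from square-summing over $\sim\delta^{-1}$ values of $\rho$ (nor the further losses from Young's inequality); your claim that ``rapid decay beats the $\delta^{-1/2}$ factor'' is incorrect as stated, and your identification of this as the source of the $\epsilon$-loss is inconsistent: if large $N$ truly absorbed the loss, there would be no $\epsilon$ at all.

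The paper fixes this by localizing at the slightly larger scale $\delta^{-1-\epsilon'}$: it sets $f_l=f\chi_{B(l,3\delta^{-1-\epsilon'})}$. Then on the tail $|x-y|\gtrsim \delta^{-1-\epsilon'}$, so $(1+\delta|x-y|)^{-N}\lesssim \delta^{\epsilon' N}$, and choosing $N$ large makes the tail genuinely negligible against any polynomial in $\delta^{-1}$. The price is paid in the \emph{main} term: the H\"older step $\|f_l\|_2\lesssim |B|^{1/2-1/p}\|f_l\|_p$ now uses $|B|\sim\delta^{-d(1+\epsilon')}$, and the overlap of the enlarged balls is $\sim\delta^{-d\epsilon'}$, together producing a $\delta^{-C\epsilon'}$ loss. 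Taking $\epsilon'=\epsilon/C$ yields the stated exponent. In short, the $\epsilon$-loss originates from the enlarged localization scale in the main term, not from the tail; once you make that adjustment your argument becomes essentially the paper's.
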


Interpolation between these estimates and those in Proposition 2.7 give additional estimates. The loss $\delta^{-\epsilon}$ can be removed by using better localization argument. See, for example, \cite{LRV}. But we don't attempt to this here.

Before proving Proposition \ref{prop:s-t}, we recall Stein-Tomas theorem (\cite{stein1}): For any $q \ge \frac{2(d+1)}{d-1}$, there is $C=C(p, d)>0$ such that
\[ \|\widehat{fd\sigma}\|_{L^q(\BBR^d)} \le C \|f\|_{L^2(\BBS^{d-1})},\]
where $\BBS^{d-1} $ is the  unit sphere in $\R^d$ and $d\sigma$ is the induced Lebesgue measure on $\BBS^{d-1}$. Using the polar coordinate, Stein-Tomas theorem, and mean-value theorem it is easy to see that, for $q \ge \frac{2(d+1)}{d-1}$,
\begin{equation}\label{eq:s-t} \|S_{\rho,\delta}^\phi f\|_q \lesssim \delta^{1/2}\|f\|_2\quad \mbox{ for }1/2\le \rho\le 2 .\end{equation}

\begin{proof}[Proof of Proposition \ref{prop:s-t}]
We denote by $K_\rho$ the kernel of $S_{\rho,\delta}^\phi$ in short. By  Lemma \ref{lem:kernel}, we see that for $1/2\le \rho\le 1$, $|K_{\rho}(x)| \le C_N \delta(1+\delta|x|)^{-N}$.  Recall that  $C_N$ is independent of $\delta$ and the choice of $\phi\in \CC_N([-1,1])$. This means that the kernel $K_{\rho}$ is essentially supported in a ball  of radius $\sim \delta^{-1}$. This enable us to use spatial localization argument, which deduces $L^p$ estimates for $\mathfrak{D}^\phi_\delta$ from $L^2\to L^p$ bound.

Let $\epsilon'>0$. We first restrict $f$ into balls of radius $\delta^{-1-\epsilon'}$: set $f_l = f\chi_{B(l, 3 \delta^{-1-\epsilon'})},~ l\in \delta^{-1}\BBZ^d$. For $x \in B(l,\delta^{-1-\epsilon'})$, we see that 
\begin{eqnarray*}
|S_{\rho,\delta}^\phi(f-f_l)(x)| \le C_N\delta  \int_{|y|\ge 2\delta^{-1-\epsilon'}}(1+\delta|y|)^{-d-1}|f(x-y)| dy \le E\ast |f|(x),
\end{eqnarray*}
where $E(x) = C_N\delta^{\epsilon'K}(1+\delta|x|)^{-d-1}$ and $K= N-d-1$. Since $q>2,$ we have
\begin{align}
& \quad \|(\sum_{\rho \in \delta\BBZ\cap [1/2,1]} |S_{\rho,\delta}^\phi f |^2)^{1/2}\|^q_q \nonumber 
\\
	\lesssim&   \sum_{l\in \delta^{-1}\BBZ^d} \int_{B(l,\delta^{-1-\epsilon'})} \Big[(\sum_{\rho} |S_{\rho,\delta}^\phi f_l |^2)^{q/2} +(\sum_{\rho} |S_{\rho,\delta}^\phi(f-f_l) |^2)^{q/2}\Big] dx \nonumber  
	\\
	\lesssim&\,\,  \delta^{-C\epsilon'} \sum_l(\sum_{\rho \in \delta\BBZ\cap [1/2,1]} \| S_{\rho,\delta}^\phi f_l \|_q^2)^{q/2} 
+  \delta^{-C\epsilon'}\int_{\R^d} 	(\sum_{\rho \in \delta\BBZ\cap [1/2,1]} (E\ast|f|)^2)^{q/2} dx \label{eq: square}.
\end{align}
Here the implicit constant depends only on $d.$ Notice that $S_{\rho,\delta}^\phi f_l =S_{\rho,\delta}^\phi\CP_{\rho}f_l$, where $\CP_{\rho}h$ is defined by 
\begin{equation}\label{eq:pro2}
 \widehat{\CP_{\rho}h}(\xi) =\chi_{\rho} (\xi) \widehat{h}(\xi) 
 \end{equation}
and $\chi_{\rho}$ is a characteristic function of $\Delta_\rho:=\{ \xi \in \BBR^d: |\xi|^2 \in [\rho-\delta,\rho+\delta] \}$. Since $\Delta_\rho$ are overlapping at most twice,  $  \sum_{\rho\in \delta\BBZ\cap [1/2,1]} \|\CP_{\rho}f_l\|^2_2\le 2\|f_l\|_2^2. $
By using this, the first term of \eqref{eq: square} is bounded by $(C \delta^{-(\frac{d-1}{2}-\frac{d}{p})-\epsilon'(\frac d2-\frac dp+C)})^q\|f\|_p^q$ because of \eqref{eq:s-t}, $l^p\subset l^q$, and $p\ge2.$
Since $0<\delta <1$ and $p\le q$, the second term of \eqref{eq: square} is bounded by
\begin{eqnarray*}
 \delta^{-C\epsilon'} \Big(\sum_{\rho\in \delta\BBZ \cap [1/2,1]} \|E\ast |f| \|^2_{q}\Big)^{q/2}\le C_K\delta^{-C\epsilon'-q/2}\delta^{\epsilon'Kq(1-1/2)}\|f\|_p^q\lesssim \|f\|_p^q.
\end{eqnarray*}
if $K$ is sufficiently large (i.e., $N$ is large enough).   
Thus, taking  $\epsilon'=\epsilon/C$ for some large $C$, we get the desired inequality. 
\end{proof}

\end{section}

\begin{section}{Reduction; decomposition lemma}\label{sec:re}
In this section, we will  break the operator $\CB^\alpha$   so that our problem  is reduced to obtaining bounds for  a simpler  bilinear operator  which is given by products of $S_{\rho,\delta}^\phi$ with different $\rho$ which is defined by \eqref{eq:S}. This reduction enables us to draw connection to the square function estimate. To do this, we first consider an auxiliary  bilinear operators $\widetilde{\CB}_{\delta}$, $0<\delta \ll 1$ which is given  by dyadically decomposing the multiplier of $\CB^\alpha$ away from its singularity $\{(\xi,\eta): |\xi|^2+|\eta|^2 =1\}$.

Let us denote by $\mathcal D$ the set of positive dyadic numbers, that is to say $\mathcal D=\{2^k: k\in \mathbb Z\}$. Fix $\alpha>0$ and  let $\psi$ be a function $ \in C^{\infty}_c (1/2,2)$ satisfying 
$  \sum_{\delta  \in  \mathcal D  } \delta^\alpha \psi(t/\delta)=t^\alpha, \   t>0.$
Then we may write 
\[  (1-t)_+^\alpha = \sum_{\delta  \in  \mathcal D: \delta\le 2^{-1}  } \delta^\alpha \psi\Big(\frac{1-t}{\delta}\Big)  +\psi_0(t),    \   t\in [0,1). \] 
where $\psi_0$ is a smooth function supported in $[0, 3/4]$.   
Using this we decompose $\mathcal B^\alpha$ so that 
\Be 
\label{del-decomp}  
 \CB^\alpha=  \sum_{\delta  \in  \mathcal D: \delta\le 2^{-1}  }  \delta^\alpha \widetilde \CB_\delta +  \widetilde\CB_0,
\Ee
where 
\Be\label{b-delta}  \widetilde \CB_\delta (f,g)(x)  = \int_{\mathbb{R}^d}\int_{\mathbb{R}^d} e^{2\pi i x\cdot(\xi+\eta)} \psi\Big(\frac{1-|\xi|^2-|\eta|^2 }{\delta}\Big) ~ \widehat{f}(\xi)~\widehat{g}(\eta)d\xi d\eta 
\Ee
and  $\widetilde{\CB}_0$ is similarly defined by $\psi_0$. Since $\psi_0\in C_c([0, 3/4])$, it is easy to see that 
\[ \|   \widetilde\CB_0(f,g)\|_{r}\le C\|f\|_p\|g\|_q\]  
whenever $1/r\le 1/p+1/q. $ Thus,  in order to show  \eqref{eq:b} for $\alpha> \kappa$ it is sufficient to show that, for any $\epsilon>0$, there exits $C_\epsilon$ such that  
 \[ \|   \widetilde\CB_\delta(f,g)\|_{r}\le C_\epsilon \delta^{{-\kappa}-\epsilon} \|f\|_p\|g\|_q. \]

For  $\phi_1, \phi_2$  be smooth functions supported in $[-1,1]$, and $\varrho\in [1/2, 2]$, we  define the bilinear operators $\CB_{\delta, \varrho}^{\phi_1, \phi_2}$  
by setting 
\begin{eqnarray}\label{op:B}
  \CB_{\delta, \varrho}^{\phi_1, \phi_2}(f,g)(x)
 &:=&\sum_{\rho\in\delta\mathbb Z\cap[0,1]} S_{\rho,\delta}^{\phi_1} f(x) S_{\varrho-\rho,\delta}^{\phi_2}g(x)
 \end{eqnarray}
 Thanks to the above argument  and Lemma \ref{Lemm:reduction} below,   instead of  $\mathcal\CB_\delta$ it suffices to obtain bounds for $\CB_{\delta,\varrho}^{\phi_1, \phi_2}$ of which 
product structure makes the problem easier.

\begin{lemma}\label{Lemm:reduction} Let  $\kappa\ge -1$,  $0<\delta_0\ll 1$ and $1\le p,q,r\le \infty$ satisfy $1/p+1/q \ge 1/r.$    Suppose that,  for any $0<\delta \ll \delta_0$ and $\varrho\in [1/2, 2]$,  
\begin{equation}\label{eq:assumption}
 \|\CB_{\delta,\varrho}^{\phi_1, \phi_2}\|_{L^p\times L^q \to L^r} \le A \delta^{-\kappa}
 \end{equation}
 holds uniformly with  $A>0$ independent of {$\delta,\varrho$, and} $\phi_1, \phi_2$,  whenever  $\phi_1, \phi_2\in \CC_N([-1,1])$  for some $N$. Then, 
  for any $\epsilon>0$ there exists a constant  $A_{\epsilon}$, independent of $\delta$,  such that 
  \[ \|\widetilde{\CB}_{\delta}\|_{L^p\times L^q \to L^r} \le A_{\epsilon} \delta^{-\kappa-\epsilon(1+\kappa)}.\]
\end{lemma}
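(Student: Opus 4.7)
The strategy is to decouple the variables $\xi$ and $\eta$ in the multiplier of $\widetilde \CB_\delta$ at scale $\delta$, expressing $\widetilde \CB_\delta$ as a rapidly convergent superposition of the operators $\CB^{\phi_1,\phi_2}_{\delta,\varrho}$; the bound then follows from hypothesis \eqref{eq:assumption} together with Minkowski's inequality (permissible since $r\ge 1$) and the rapid decay of $\widehat\psi$.

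To carry this out, fix $\chi\in C^\infty_c((-1,1))$ with $\sum_{k\in\mathbb Z}\chi(t-k)\equiv 1$. Inserting the partitions of unity $1=\sum_{\rho\in\delta\mathbb Z}\chi(\tfrac{|\xi|^2-\rho}{\delta})$ and $1=\sum_{\rho'\in\delta\mathbb Z}\chi(\tfrac{|\eta|^2-\rho'}{\delta})$ into the multiplier of $\widetilde\CB_\delta$ and regrouping by $\varrho:=\rho+\rho'$,
\[
\psi\Big(\tfrac{1-|\xi|^2-|\eta|^2}{\delta}\Big)=\sum_{\varrho\in\CV}\sum_{\rho\in\delta\mathbb Z}\chi\Big(\tfrac{|\xi|^2-\rho}{\delta}\Big)\chi\Big(\tfrac{|\eta|^2-(\varrho-\rho)}{\delta}\Big)\psi\Big(\tfrac{1-|\xi|^2-|\eta|^2}{\delta}\Big),
\]
where the support $\supp\psi\subset(1/2,2)$ forces $\CV$ to be an $O(1)$-subset of $\delta\mathbb Z$ clustered within distance $O(\delta)$ of $1$, so in particular $\CV\subset[1/2,2]$ for $\delta$ small enough. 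With the rescaled variables $u=(|\xi|^2-\rho)/\delta$, $v=(|\eta|^2-(\varrho-\rho))/\delta$ and the $O(1)$-constant $a_\varrho:=(1-\varrho)/\delta$, the inner summand becomes $\chi(u)\chi(v)\psi(a_\varrho-u-v)$, and Fourier inversion of the Schwartz function $\psi$ yields the tensor decomposition
\[
\chi(u)\chi(v)\psi(a_\varrho-u-v)=\int_{\mathbb R}\widehat\psi(\tau)e^{2\pi i\tau a_\varrho}\phi_\tau(u)\phi_\tau(v)\,d\tau,\qquad\phi_\tau(t):=\chi(t)e^{-2\pi i\tau t}.
\]
Reading this identity back as a multiplier in $(\xi,\eta)$, summing over $\rho\in\delta\mathbb Z$ (only $\rho$ within $O(\delta)$ of $[0,1]$ contribute, in agreement with \eqref{op:B}) and swapping sum with integral through the rapid decay of $\widehat\psi$, I arrive at the key identity
\[
\widetilde\CB_\delta(f,g)=\sum_{\varrho\in\CV}\int_{\mathbb R}\widehat\psi(\tau)e^{2\pi i\tau a_\varrho}\CB^{\phi_\tau,\phi_\tau}_{\delta,\varrho}(f,g)\,d\tau.
\]

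To conclude, observe that $\|\phi_\tau\|_{C^N}\le C_N(1+|\tau|)^N$, hence $C_N^{-1}(1+|\tau|)^{-N}\phi_\tau\in\CC_N([-1,1])$; the multilinearity of $\CB^{\phi_1,\phi_2}_{\delta,\varrho}$ together with hypothesis \eqref{eq:assumption} then gives
\[
\|\CB^{\phi_\tau,\phi_\tau}_{\delta,\varrho}\|_{L^p\times L^q\to L^r}\le A\,C_N^2\,(1+|\tau|)^{2N}\delta^{-\kappa}.
\]
Since $\psi\in\CS(\mathbb R)$, $|\widehat\psi(\tau)|\le C_M(1+|\tau|)^{-M}$ for any $M>2N+1$, so $\int|\widehat\psi(\tau)|(1+|\tau|)^{2N}d\tau<\infty$. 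Because $r\ge 1$, Minkowski's inequality in $\tau$ combined with $\#\CV=O(1)$ yields $\|\widetilde\CB_\delta\|_{L^p\times L^q\to L^r}\lesssim A\delta^{-\kappa}$, which is in particular bounded by $A_\epsilon\delta^{-\kappa-\epsilon(1+\kappa)}$ since $1+\kappa\ge 0$. The main obstacle I anticipate is bookkeeping in the Fourier-decoupling step: one must verify that the inner sum in $\rho$ reassembles exactly into the multiplier of $\CB^{\phi_\tau,\phi_\tau}_{\delta,\varrho}$ rather than producing cross terms mixing different $\rho$'s, and that the effective range of $\rho$ matches $\delta\mathbb Z\cap[0,1]$ appearing in \eqref{op:B}; both issues are handled by the one-variable form of the cutoffs $\chi(\tfrac{\cdot-\rho}{\delta})$ together with the support properties of $\psi$.
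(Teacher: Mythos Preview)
Your argument is essentially correct and takes a genuinely different, cleaner route than the paper's. The paper passes to a \emph{finer} scale $\tilde\delta=\delta^{1+\epsilon}$, inserts the partition of unity there, and then Taylor-expands $e^{2\pi i\tau(\varrho-|\xi|^2-|\eta|^2)/\delta}$ to order $N$ in the now-small quantities $(\rho-|\xi|^2)/\delta,\ (\varrho-\rho-|\eta|^2)/\delta=O(\delta^\epsilon)$; the polynomial terms produce operators of the form $\CB_{\tilde\delta,\varrho}^{\phi_\beta,\phi_\gamma}$ with $\phi_\beta(t)=t^\beta\varphi(t)$, and the Taylor remainder is disposed of via a separate crude multiplier estimate (Lemma~\ref{Lemm:I_E}). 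Two losses enter: invoking the hypothesis at scale $\tilde\delta$ costs $\delta^{-\kappa\epsilon}$, and the sum over $\varrho\in\tilde\delta\mathbb Z\cap[1-4\delta,1+2\delta]$ has $O(\delta^{-\epsilon})$ terms, giving the stated $\delta^{-\kappa-\epsilon(1+\kappa)}$. Your exact Fourier-inversion identity
\[
\chi(u)\chi(v)\psi(a_\varrho-u-v)=\int_{\mathbb R}\widehat\psi(\tau)e^{2\pi i\tau a_\varrho}\phi_\tau(u)\phi_\tau(v)\,d\tau
\]
tensorizes the multiplier at the original scale $\delta$ with no remainder, so only $O(1)$ values of $\varrho$ arise and the hypothesis is used at scale $\delta$ itself; you therefore obtain $\|\widetilde\CB_\delta\|\lesssim A\delta^{-\kappa}$, which is strictly stronger than the lemma's conclusion.

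Two small points to tighten. First, $\phi_\tau(t)=\chi(t)e^{-2\pi i\tau t}$ is complex-valued, whereas $\CC_N([-1,1])$ is implicitly a class of real functions; split $\phi_\tau$ into real and imaginary parts and expand $\CB^{\phi_\tau,\phi_\tau}_{\delta,\varrho}$ bilinearly into four pieces before invoking \eqref{eq:assumption}. Second, once you swap the $\rho$-sum with the $\tau$-integral and estimate each $\tau$-slice separately, the constraint from $\supp\psi$ is no longer available termwise, so the effective range of $\rho$ for which $S^{\phi_\tau}_{\rho,\delta}f\cdot S^{\phi_\tau}_{\varrho-\rho,\delta}g\not\equiv 0$ is $\delta\mathbb Z\cap[0,\varrho+\delta)$ rather than $\delta\mathbb Z\cap[0,1]$ as in \eqref{op:B}; this may produce $O(1)$ extra boundary terms with $\rho>1$ and $\varrho-\rho=O(\delta)$. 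These are harmless (the second factor has frequency support in $\{|\eta|\lesssim\delta^{1/2}\}$ and is controlled by the kernel bound in Lemma~\ref{lem:kernel}), but your sentence claiming they are ``handled by \dots\ the support properties of $\psi$'' is not quite right after the sum--integral swap; a one-line separate estimate is needed.
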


It is not difficult to see that \eqref{eq:assumption} does not hold for $\kappa<-1$. In fact, let $f$, $g$ be smooth functions such that  $\supp \widehat f,$ $ \supp \widehat g\subset  B(0, 4)$ and 
$\widehat f= \widehat g=1$   on  $B(0, 3)$ and $\phi=\phi_1=\phi_2$ be nontrivial nonnegative functions with  $\supp \phi\subset [-1,1]$. Then,   it is easy to see that $|\CB_{\delta,\varrho}^{\phi_1, \phi_2}(f,g)(x)|\gtrsim \delta$  if $|x|\le c$ with sufficiently small $c>0$. 
Thus $\|\CB_{\delta,\varrho}^{\phi_1, \phi_2}(f,g)\|_r\gtrsim \delta$ while $\|f\|_p, \|g\|_q\lesssim 1$. This implies $\|\CB_{\delta,\varrho}^{\phi_1, \phi_2}\|_{L^p\times L^q \to L^r} \gtrsim  \delta$.

\begin{remark}\label{re:trivial}Using Lemma \ref{Lemm:reduction} and the trivial $L^2$-estimate for $S_{\rho,\delta}^\phi$, we can easily recover the boundedness of  $\CB^\alpha$ from $L^2(\R^d)\times L^2(\R^d)$ into $L^1(\R^d)$ for $\alpha>0$ (Proposition  \ref{thm:bg}).
Applying Schwarz's inequality and Plancherel's theorem, we have 
\[\|\CB_{\delta,\varrho}^{\phi_1, \phi_2}(f,g)\|_1\le  ( \sum_{\rho\in \delta\Z\cap [0,1]} \| S_{\rho,\delta}^{\phi_1}f\|_2^2)^{1/2}( \sum_{\rho\in \delta\Z\cap [0,1]} \| S_{\varrho-\rho,\delta}^{\phi_2} g\|_2^2)^{1/2}.
\]
Since $\sum_{\rho\in \delta\Z\cap [0,1]} \|S_{\rho,\delta}^{\phi_1} f\|_{2}^2 \lesssim \|f\|_2^2$, $\sum_{\rho\in \delta\Z\cap [0,1]} \|S_{\varrho-\rho,\delta}^{\phi_2} g\|_{2}^2 \lesssim \|g\|_2^2$,  it follows from the above that   $\|\CB_{\delta,\varrho}^{\phi_1,\phi_2}\|_{L^2\times L^2 \to L^1} \lesssim 1$. By Lemma \ref{Lemm:reduction} $\|\widetilde{\CB}_{\delta}\|_{L^2\times L^2 \to L^1} \le A_{\epsilon} \delta^{-\epsilon} $ and, hence, from \eqref{del-decomp} we see that $\CB^\alpha$ is bounded from $L^2(\R^d)\times L^2(\R^d)$ into $L^1(\R^d)$ for all $\alpha>0.$
\end{remark}

 \begin{proof}[Proof of Lemma \ref{Lemm:reduction}]
Let $\varphi \in C^{\infty}_{c}([-1,1])$ satisfy
\begin{equation}\label{eq: equal}
 \sum_{k\in \BBZ}\varphi(t+k) = 1, \quad t\in \BBR.
 \end{equation}
 Using this, we will decompose the multiplier of $\widetilde{B}_\delta$ into sum of  multipliers which are given by (tensor) product of two multipliers supported in thin annuli. More precisely, we fix small $\epsilon>0$ and $0<\delta\le\delta_0$, and set $\tilde{\delta}=\delta^{1+\epsilon}<\delta$. Then
 \begin{align*}
\psi\Big(\frac{1-|\xi|^2-|\eta|^2}\delta\Big)
 = \sum_{\rho\in \tilde{\delta}\BBZ\cap[0,1]}  \sum_{\varrho\in \tilde{\delta}\BBZ}\varphi\Big(\frac{\rho-|\xi|^2}{\tilde{\delta}}\Big)\varphi\Big(\frac{\varrho-\rho-|\eta|^2}{\tilde{\delta}}\Big)\psi\Big(\frac{1-|\xi|^2-|\eta|^2}\delta\Big).
 \end{align*}
Note that $\varphi(\frac{\rho-|\xi|^2}{\tilde{\delta}}) \psi(\frac{1-|\xi|^2-|\eta|^2}\delta)\neq 0$ implies $1-3\delta \le |\eta|^2+\rho\le 1+\delta,$ since $\supp \varphi\subset[-1,1]$ and $\supp \psi\subset[1/2,2]$. The summands  vanish if we take the sum over $\varrho\in \tilde{\delta}\BBZ\cap(\R\setminus [1-4\delta,1+2\delta])$. Thus we can write
\begin{align}
\label{op} \widetilde{\CB}_{\delta}&(f,g)(x)= \sum_{\varrho\in\tilde{\delta}\BBZ\cap[1-4\delta, 1+2\delta]} \sum_{\rho \in \tilde{\delta}\BBZ\cap[0,1]} \iint_{\BBR^d\times\BBR^d}   e^{2\pi i x\cdot(\xi +\eta)}  \\&\times \psi\Big(\frac{1-|\xi|^2-|\eta|^2}{\delta}\Big) 
 \varphi\Big(\frac{\rho -|\xi|^2}{\tilde{\delta}}\Big)\varphi\Big(\frac{\varrho- \rho -|\eta|^2}{\tilde{\delta}}\Big)\widehat{f}(\xi)\widehat{g}(\eta) d\xi d\eta. \nonumber
\end{align}

Let  $N$ be a constant to be chosen later.  By Taylor's theorem we may write  
\[\begin{aligned}e^{2\pi i (\frac{\varrho-|\xi|^2-|\eta|^2}{\delta})\tau} =\sum_{0\le\beta+\gamma\le N}&C_{\beta,\gamma}\tau^{\beta+\gamma}\Big(\frac{\rho-|\xi|^2}{\delta}\Big)^{\beta}\Big(\frac{\varrho-\rho-|\eta|^2}{\delta}\Big)^{\gamma} \\ &+E\Big(2\pi i (\frac{\varrho-|\xi|^2-|\eta|^2}{\delta})\tau\Big),
\end{aligned}
\]
for any  $\rho \in \tilde{\delta}\BBZ\cap[0,1]$ and the remainder $E$ satisfies, for $0\le k\le N$,
\begin{equation}\label{eq:E}
 | E^{(k)}(t)|\le C_k |t|^{N-k}.
 \end{equation}
Using inversion,  for any $\varrho$ we have 
\Be\label{expand} \psi\Big(\frac{1-|\xi|^2-|\eta|^2}{\delta}\Big) =\int_{\BBR} \widehat{\psi}(\tau)e^{2\pi i \frac{1-\varrho}{\delta}\tau}e^{2\pi i (\frac{\varrho-|\xi|^2-|\eta|^2}{\delta})\tau}d\tau. \Ee

For  $0\le \beta \le N$ we set $\phi_{\beta}(t) =t^{\beta}\varphi(t)\in C_0^{\infty}(-1,1)$ and also set 
\[ \widehat \psi_\varrho=  \widehat{\psi}(\tau)e^{2\pi i \frac{1-\varrho}{\delta}\tau}.  \] Then,  putting  the 
above  in the right hand side of \eqref{expand}, we have 
\Be\label{op2}\begin{aligned}
&\qquad\quad \psi\Big(\frac{1-|\xi|^2-|\eta|^2}{\delta}\Big)=\sum_{0\le\beta+\gamma\le N}C_{\beta,\gamma}\Big(\int \widehat{\psi }_\varrho (\tau)\tau^{\beta+\gamma} d\tau\Big) \\ &\times  \phi_\beta\Big(\frac{\rho-|\xi|^2}{\delta}\Big)\phi_\gamma\Big(\frac{\varrho-\rho-|\eta|^2}{\delta}\Big)+ \int  \widehat{\psi }_\varrho (\tau) E\Big(2\pi i (\frac{\varrho-|\xi|^2-|\eta|^2}{\delta})\tau\Big)  \,d \tau 
\end{aligned}
\Ee
For each $0\le \beta, \gamma\le N$, we set  
\[I^\varrho_{\beta,\gamma}(f,g)=\Big(\int \widehat{\psi }_\varrho (\tau)\tau^{\beta+\gamma} d\tau\Big)\times\Big(\sum_{\rho\in\tilde{\delta}\Z\cap[0,1]}S_{\rho,\tilde{\delta}}^{\phi_\beta}f(x)S_{\varrho-\rho, \tilde{\delta}}^{\phi_\gamma}g(x)\Big).\]
Inserting \eqref{op2} in \eqref{op},  we express $\widetilde{\CB}_{\delta}$ as a sum of bilinear operators which are given by products of $S_{\rho,\tilde{\delta}}^{\phi_\beta}$:
\Be
\widetilde{\CB}_{\delta}(f,g)=\sum_{\varrho\in\tilde{\delta}\BBZ\cap[1-4\delta,1+2\delta]}\Big[ \sum_{0\le \beta+\gamma\le N}C_{\beta,\gamma}  \delta^{\epsilon(\beta+\gamma)}
 I^\varrho_{\beta,\gamma}  (f,g) +I^\varrho_E  (f,g) \Big],\label{eq:decom}
\Ee
where 
\[ I^\varrho_E(f,g)=
\sum_{\rho}\int \widehat{\psi_\varrho}(\tau) \iint e^{2\pi i x\cdot(\xi +\eta)} E_{\delta,\tilde \delta}(\xi,\eta,\rho, \varrho,\tau) \widehat{f}(\xi)\widehat{g}(\eta)d\xi d\eta d\tau.\]
and 
\[E_{\delta,\tilde \delta}(\xi,\eta,\rho, \varrho,\tau) = E\Big(2\pi i (\frac{\varrho-|\xi|^2-|\eta|^2}{\delta})\tau\Big) \varphi\big(\frac{\rho -|\xi|^2}{\tilde{\delta}}\big)\varphi\big(\frac{\varrho- \rho -|\eta|^2}{\tilde{\delta}}\big).\]

 Set $M= \max\{\|\varphi_\beta\|_{C^N([-1,1])}: 0\le\beta \le N\}$. 
 Then $M^{-1}\varphi_\beta \in \CC_N([-1,1])$ for all $0\le\beta \le N$. Thus, from  the assumption \eqref{eq:assumption} we have that, for each $I^\varrho_{\beta,\gamma}$, there exists a constant  $A$ such that
\Be\label{I-rho}
\|I^\varrho_{\beta,\gamma}(f,g)\|_r\le AM\tilde{\delta}^{-\kappa}\|f\|_p\|g\|_q,
\Ee
since $\psi$ is a Schwartz function.

Now, in order to complete the proof,  it is sufficient to show 
\Be \| I^\varrho_E(f,g)\|_r\le   A\delta^{-\kappa(1+\epsilon)}\|f\|_p\|g\|_q . \Ee  For the purpose, we use Lemma \ref{Lemm:I_E} below, which is a simple consequence of the bilinear interpolation.
\begin{lemma}\label{Lemm:I_E}
Let $0<\delta <1$ and $\tau \in \BBR$. Fix a large integer $N> 2d$.
Suppose that $m_{\delta,\tau} \in C^{\infty}_0(\BBR^d\times\BBR^d)$ is a smooth function supported in the cube $[-2,2]^{2d}$ in $ \BBR^{2d}$, and suppose that $m_{\delta,\tau}$ satisfies
\[|\partial_{\xi}^{\alpha}\partial_{\eta}^{\beta} m_{\delta,\tau}(\xi,\eta)| \le C_{\alpha,\beta}(1+|\tau|)^{N}\delta^{-|\alpha|-|\beta|} \]
 for all multi-indices $\alpha$, $\beta$ with $|\alpha|+|\beta|\le N.$ Let $T_{\delta,\tau}$ be defined by
 \[ T_{\delta,\tau}(f,g) = \iint_{\BBR^d\times\BBR^d} e^{2\pi i x\cdot(\xi+\eta)} m_{\delta,\tau}(\xi,\eta) \widehat{f}(\xi)\widehat{g}(\eta)d\xi d\eta,\quad  f,g \in \CS(\BBR^d)\]
Then, for $p,q,r \in [1,\infty]$ and
$ \frac{1}{p}+\frac{1}{q} \ge \frac{1}{r}$, we have
\[ \|T_{\delta,\tau}(f,g)\|_r\le C(1+|\tau|)^N\delta^{-d(2-\frac1p-\frac1q)}\|f\|_p\|g\|_q.\]
\end{lemma}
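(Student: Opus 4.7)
The plan is to apply bilinear (Calder\'on-type) complex interpolation among a few endpoint bounds, each following from a pointwise kernel estimate.

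First, I would write $T_{\delta,\tau}(f,g)(x)=\iint K_{\delta,\tau}(x-y,x-z)f(y)g(z)\,dy\,dz$ with $K_{\delta,\tau}=\mathcal{F}^{-1}(m_{\delta,\tau})$. Since $m_{\delta,\tau}$ is supported in $[-2,2]^{2d}$ and its derivatives up to order $N$ are bounded by $C(1+|\tau|)^{N}\delta^{-|\alpha|-|\beta|}$, repeated integration by parts in $(\xi,\eta)$ yields the pointwise estimate
\[
|K_{\delta,\tau}(u,v)|\le C(1+|\tau|)^{N}(1+\delta(|u|+|v|))^{-N}.
\]
From this I would extract $\|K_{\delta,\tau}\|_{1}\lesssim(1+|\tau|)^{N}\delta^{-2d}$, $\|K_{\delta,\tau}\|_{\infty}\lesssim(1+|\tau|)^{N}$, and the marginal bound $\sup_{u}\int|K_{\delta,\tau}(u,v)|\,dv\lesssim(1+|\tau|)^{N}\delta^{-d}$ (together with its symmetric counterpart).

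Next, I would verify the target estimate $\|T_{\delta,\tau}\|_{L^{p}\times L^{q}\to L^{r}}\lesssim(1+|\tau|)^{N}\delta^{-d(2-1/p-1/q)}$ at the vertices of the polytope $\{(1/p,1/q,1/r)\in[0,1]^{3}:1/p+1/q\ge 1/r\}$ by Fubini on the bilinear kernel integral $\iiint|K_{\delta,\tau}(x-y,x-z)||f(y)||g(z)|\,dy\,dz\,dx$. The diagonal vertex $(\infty,\infty,\infty)$ follows from $\|K_{\delta,\tau}\|_{1}$, giving $\delta^{-2d}$; the opposite diagonal vertex $(1,1,\infty)$ follows from $\|K_{\delta,\tau}\|_{\infty}$, giving $\delta^{0}$; and the mixed vertices such as $(1,\infty,\infty)$ follow from integrating one kernel variable first and using the marginal bound, giving $\delta^{-d}$.

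Finally, since the target exponent $-d(2-1/p-1/q)$ is an affine function of $(1/p,1/q)$ that is independent of $1/r$, Calder\'on's bilinear complex interpolation theorem fills in the entire polytope from the vertex estimates, and the $(1+|\tau|)^{N}$ factor is preserved throughout. The main anticipated obstacle is the verification of the vertex bounds at off-diagonal corners such as $(1,\infty,1)$, where the naive pointwise kernel estimate produces a loose $\delta^{-2d}$; handling these cleanly requires exploiting cancellation in the one-variable marginal $\int K_{\delta,\tau}(u,v)\,dv=\mathcal{F}^{-1}(m_{\delta,\tau}(\cdot,0))(u)$ (rather than in $\int|K_{\delta,\tau}(u,v)|\,dv$), which recovers the sharper $\delta^{-d}$ demanded by the claim.
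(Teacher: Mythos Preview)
Your approach is essentially the paper's: a pointwise kernel bound followed by bilinear interpolation. The one substantive difference is that the paper records the kernel estimate in \emph{separated} form,
\[
|K_{\delta,\tau}(u,v)|\le C(1+|\tau|)^{N}(1+\delta|u|)^{-N_1}(1+\delta|v|)^{-N_2},\qquad N_1+N_2\le N,
\]
and then specializes to $N_1=N_2=d+\tfrac12$. This product structure gives the pointwise factorization $|T_{\delta,\tau}(f,g)|\lesssim(1+|\tau|)^{N}(\psi_\delta*|f|)(\psi_\delta*|g|)$ with $\psi_\delta(w)=(1+\delta|w|)^{-d-1/2}$, from which H\"older's inequality yields the $L^p\times L^q\to L^\infty$ bound with the exact exponent $\delta^{-d(2-1/p-1/q)}$ for \emph{every} $(p,q)\in[1,\infty]^2$ in one stroke. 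Your joint bound $(1+\delta(|u|+|v|))^{-N}$ is equivalent but hides this factorization, which is why you end up computing marginal $L^1$ norms separately.

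Your proposed remedy at the corner $(p,q,r)=(1,\infty,1)$ has a genuine gap. The identity $\int K_{\delta,\tau}(u,v)\,dv=\mathcal F^{-1}(m_{\delta,\tau}(\cdot,0))(u)$ is only relevant when $g$ is constant; for a general $g\in L^\infty$ the $v$-integral $\int K_{\delta,\tau}(u,x-z)g(z)\,dz$ cannot be reduced to this signed marginal, and no cancellation is available. So this step does not recover $\delta^{-d}$ at that vertex. In the paper the $L^1$ endpoint is asserted ``by Fubini''; if one follows the absolute-value kernel bound through Fubini one actually obtains $\delta^{-d(3-1/p-1/q)}$ rather than the stated $\delta^{-d(2-1/p-1/q)}$ on the face $r=1$. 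This discrepancy is harmless for the application of the lemma, since it is used only to control the remainder $I_E^\varrho$, which carries an extra factor $\delta^{\epsilon N}$ absorbing any fixed power of $\delta^{-1}$. For a clean proof of the lemma as stated, drop the problematic $r=1$ vertices and instead interpolate the $L^p\times L^q\to L^\infty$ family (which you do obtain with the correct exponent) against, say, the trivial $L^1\times L^1\to L^{1}$ bound; alternatively, simply note that the $L^\infty$ estimate alone already suffices for the role the lemma plays in the paper.
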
 
\begin{proof} By definition, we can write
\[ T_{\delta,\tau}(f,g)(x) = \iint \widehat{m_{\delta,\tau}}(y-x,z-x) f(y)g(z) dydz.\]
Applying usual integration by parts, we have
$ |\widehat{m_{\delta,\tau}}(y,z)| \le C_K(1+ |\tau|)^N(1+\delta|y|)^{-N_1}(1+\delta|z|)^{-N_2}$ 
for all $N_1+N_2\le  N$.   Since $N$ is an integer bigger than $2d$, in particular, we have 
\[ |\widehat{m_{\delta,\tau}}(y,z)| \le C (1+ |\tau|)^N(1+\delta|y|)^{-d-\frac12}(1+\delta|z|)^{-d-\frac12}.\] Thus, for any $p,q\ge 1$, 
\[ \|T_{\delta,\tau}(f,g)\|_\infty \le C (1+|\tau|)^N\delta^{-d(2-\frac1p-\frac1q)}\|f\|_p\|g\|_q\]
On the other hand, by Fubini's theorem
we have
\[ \|T_{\delta,\tau}(f,g)\|_1 \le C (1+|\tau|)^N\delta^{-d(2-\frac1p-\frac1q)}\|f\|_p\|g\|_q\]
for any $p,q \ge 1$ with $\frac1p+\frac1q\ge 1.$
The bilinear interpolation between these two estimates gives all the desired estimates.
\end{proof}

\smallskip

In order to apply Lemma \ref{Lemm:I_E} to $I^\varrho_E$, we define a function $m$ on $\BBR^d\times\BBR^d\times\BBR$ by  
\[ m(\xi,\eta,\tau) = \delta^{-\epsilon N}  E_{\delta,\tilde \delta}(\xi,\eta,\rho, \varrho,\tau) \]
for some $\rho \in \tilde{\delta}\BBZ\cap[0,1]$, then $m(\cdot,\cdot,\tau)$ satisfies all properties of the function $m_{\tilde{\delta},\tau}$ in Lemma \ref{Lemm:I_E}, because of \eqref{eq:E}. More precisely, using \eqref{eq:E} we see that 
\begin{align*}
|m(\xi,\eta,\tau)|&\le C_0|\tau|^{N}\Big({\Big|}\frac{\rho-|\xi|^2}{\tilde{\delta}}{\Big|}+ {\Big|}\frac{\varrho-\rho -|\eta|^2}{\tilde{\delta}}{\Big|}\Big)^{N}\Big|\varphi\Big(\frac{\rho-|\xi|^2}{\tilde{\delta}}\Big)\varphi\Big(\frac{\varrho-\rho-|\eta|^2}{\tilde{\delta}}\Big)\Big|\\
&\le C_0(1+|\tau|)^N 2^N\|\varphi\|^2_{\infty},
\end{align*}
and similarly, by direct differentiation and using \eqref{eq:E}  we also have, for $\beta,\gamma$ with $|\beta|+|\gamma|\le N$, 
\[
|\partial^{\beta}_{\xi}\partial^{\gamma}_{\eta}m(\xi,\eta,\tau)|\le C(1+|\tau|)^N(\tilde{\delta})^{-\beta-\gamma}.
\]
Here $C$ depends only on $C_k$ in \eqref{eq:E} and $M$.  
We note that  $I^\varrho_E(f,g)$ is expressed by
\begin{eqnarray*}
I^\varrho_E (f,g)(x)=\delta^{\epsilon N}\sum_{\rho\in \tilde{\delta}\BBZ\cap[0,1]}\int e^{2\pi i \frac{\rho-\varrho}{\delta}}\widehat{\psi}(\tau) T_{\tilde{\delta},\tau}(f,g)(x) d\tau,
\end{eqnarray*}
where $T_{\tilde{\delta},\tau}$ is defined as in Lemma \ref{Lemm:I_E}. Thus, by Lemma \ref{Lemm:I_E} and Minkowski's inequality 
we obtain
\begin{align*}
\|I^\varrho_E(f,g)\|_r  
&\le C \delta^{\epsilon N}\tilde{\delta}^{-2d}\|f\|_p\|g\|_q \sum_{\rho}\int |\widehat{\psi}(\tau)|(1+|\tau| )^Nd\tau \\\ 
&
\lesssim \delta^{\epsilon N}\tilde{\delta}^{-2d-1}\|f\|_p\|g\|_q,
\end{align*}
provided that $\frac{1}{p}+\frac{1}{q} \ge\frac{1}{r}$. Thus, combining this estimate, \eqref{eq:decom} and \eqref{I-rho}, we obtain
\[
\|\widetilde{\CB}_{\delta}(f,g)\|_r
\lesssim\sum_{\varrho\in\tilde{\delta}\BBZ\cap[1-4\delta, 1+2\delta]}\Big[ \sum_{0\le\beta+\gamma\le N} |C_{\beta,\gamma}|\delta^{\epsilon(\beta+\gamma)}\tilde{\delta}^{-\kappa} +\delta^{\epsilon( N-2d-1)-2d-1} \Big] \|f\|_p\|g\|_q.
\]
Therefore, choosing sufficiently large $N$, we have 
\[
\|\widetilde{\CB}_{\delta}(f,g)\|_r \lesssim  \delta^{-\kappa-\epsilon(1+\kappa)} \|f\|_p\|g\|_q. 
\]
 Here the implicit constant is independent of $\delta.$
\end{proof}

\end{section}

\begin{section}{Boundedness of bilinear Bochner-Riesz operators} 
In this section  we prove Theorem \ref{thm:main} and also obtain results for the  sub-critical case $\frac1p+\frac1q>\frac1r$ mostly relying on Stein-Tomas's theorem.  In addition, we find a necessary condition for $\CB^\alpha$ by using duality and asymptotic behavior of localized kernel.

\subsection{Proof of Theorem \ref{thm:main}} To verify Theorem \ref{thm:main}, by \eqref{del-decomp} and Lemma \ref{Lemm:reduction}  it is enough to show Proposition \ref{prop:TS1} below by using the argument in Section \ref{sec:re}.
\begin{prop}\label{prop:TS1}
 Let $d\ge 2$, $0<\delta\le \delta_0\ll 1$, $2\le p,q\le\infty$ and $1/r=1/p+1/q.$ 
 Suppose that for $p\ge p_\circ$ the estimate \eqref{del-square} holds with $C$ independent of {$\delta$ and }$\phi$ whenever $\phi\in \CC_N([-1,1])$ for some $N$. Then, for any $\epsilon>0$, there exist $N=N(\epsilon)$  and $C_\epsilon$ such that for $\varrho\in [1/2,2]$
\begin{equation}\label{eq:TS}
  \|\CB_{\delta,\varrho}^{\phi_1, \phi_2}(f,g)\|_{L^r(\R^d)} \le C_{\epsilon}\delta^{-\alpha_{\frac1{p_\circ}}(1/p,1/q)-\epsilon } \|f\|_{L^p(\R^d)}\|g\|_{L^q(\R^d)}
\end{equation}
 holds uniformly provided that   $\phi_1$ and $\phi_2$ in $\CC_N([-1,1])$.
 \end{prop}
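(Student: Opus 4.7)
The strategy is to exploit the product structure of $\CB_{\delta,\varrho}^{\phi_1,\phi_2}$ via the Cauchy--Schwarz inequality. Applying it to the defining sum yields the pointwise estimate
\[
\bigl|\CB_{\delta,\varrho}^{\phi_1,\phi_2}(f,g)(x)\bigr| \le F(x)\,G(x),
\]
where $F(x) = (\sum_{\rho\in\delta\Z\cap[0,1]} |S_{\rho,\delta}^{\phi_1}f(x)|^2)^{1/2}$ and $G(x)$ is the analogous square function for $g$ with $\rho$ replaced by $\varrho-\rho$. Since $1/p+1/q=1/r$, H\"older's inequality then reduces the proof to estimating $\|F\|_p\|G\|_q$, effectively decoupling the bilinear problem into two (linear) square function estimates.

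The next step is to produce, for each $p\in[2,\infty]$, a bound of the form $\|F\|_p \le C_\epsilon\,\delta^{-E(1/p)-\epsilon}\|f\|_p$. When $p\ge p_\circ$, the hypothesis \eqref{del-square} together with Lemma~\ref{lem:sq1} gives the strong bound with $E(1/p)=\beta_\ast(1/p)$. When $p=2$, Plancherel together with the bounded overlap of the frequency supports of the $S_{\rho,\delta}^{\phi_1}$ yields the trivial bound $E(1/2)=0$. For intermediate $p\in[2,p_\circ]$, complex interpolation between these two endpoints produces $E(1/p) = \beta_\ast(\nu)(1-2/p)/(1-2\nu)$ with $\nu = 1/p_\circ$. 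The same estimates apply to $G$ with $q$ in place of $p$.

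Combining the two factor estimates through H\"older, the total exponent of $\delta^{-1}$ is $E(1/p)+E(1/q)$, and one verifies by direct computation that this matches $\alpha_\nu(1/p,1/q)$ on each of the three regions. In $\Delta_1(\nu)$ both factors receive the strong bound, giving $\beta_\ast(1/p)+\beta_\ast(1/q)=(d-1)-d(1/p+1/q)$. In $\Delta_2(\nu)$ both get the interpolated bound, producing the symmetric expression $\beta_\ast(\nu)(2-2/p-2/q)/(1-2\nu)$. In $\Delta_3(\nu)$, say with $1/p<\nu<1/q$, the strong bound is used for $f$ and the interpolated one for $g$, yielding $\beta_\ast(1/p)+\beta_\ast(\nu)(1-2/q)/(1-2\nu)$, which coincides with the $\Delta_3$ expression for $\alpha_\nu$ after identifying the maximum and the minimum.

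The principal technical obstacle is that Proposition~\ref{prop:main} supplies the discrete square function bound only for $\rho\in\delta\Z\cap[1/2,1]$, whereas $F$ and $G$ sum over $\rho\in\delta\Z\cap[0,1]$. I plan to handle this by a dyadic decomposition $\rho\sim 2^{-k}$ combined with the scaling identity \eqref{scale-scale}, which transforms each dyadic shell into a unit-scale square function at rescaled width $\tilde\delta=\delta\cdot 2^k$. Applying the (scale-invariant) square function bound to each piece and summing via the Minkowski $\ell^2$ inequality produces a geometric series in $k$ with ratio $2^{-(E(1/p)+\epsilon)}$; this converges since $\beta_\ast(\cdot)\ge 0$ on $[0,1/p_\circ]$ under the assumption $p_\circ\ge 2d/(d-1)$, with the $\delta^{-\epsilon}$ slack absorbing any borderline logarithmic loss. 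The $O(1)$ innermost terms with $\rho\lesssim\delta$, for which the rescaled width exceeds $1$, are controlled directly by the trivial $L^1$-kernel bound from Lemma~\ref{lem:kernel}.
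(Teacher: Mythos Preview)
Your proof is correct and follows essentially the same approach as the paper. The paper also applies Cauchy--Schwarz pointwise and H\"older to reduce to the full-range square function bound \eqref{eq:tem}, establishes it for $p\ge p_\circ$ via exactly your dyadic-in-$\rho$ rescaling argument (with Lemma~\ref{lem:kernel} for $\rho\lesssim\delta$), and uses Plancherel at $p=2$; the only organizational difference is that the paper then interpolates at the bilinear level between the endpoint pairs $(1/p,1/q)\in\Delta_1$, $(1/2,1/2)$, $(1/p_\circ,1/2)$, $(1/2,1/p_\circ)$, $(0,1/2)$, $(1/2,0)$, whereas you interpolate the linear square function bound first and then multiply, which yields the same exponent since $\alpha_\nu(u,v)=E(u)+E(v)$.
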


\begin{proof} In view of the interpolation it suffices to prove \eqref{eq:TS} for critical pairs of exponents $(1/p,1/q)$ which are in $\Delta_1=\Delta_1(\frac1{p_o})$,  $\{(1/2,1/p_\circ)\}$, $\{(1/p_\circ,1/2)\}$,  $\{(1/2,0)\}$, $\{(0,1/2)\}$, and $\{(1/2,1/2)\}.$

We first consider the  case $(1/p,1/q)\in \Delta_1$. We fix $(1/p,1/q)\in \Delta_1$, i,e, $p,q\ge p_\circ$. Then it is sufficient to show that for any $\epsilon>0$ there is an $N$ such that 
\[ \|\CB_{\delta,\varrho}^{\phi_1,\phi_2}(f,g)\|_r\lesssim \delta^{-\beta_*(\frac1p)-\beta_*(\frac1q)-\epsilon}\|f\|_p\|g\|_q,\]
where $\CB_{\delta,\varrho}^{\phi_1,\phi_2}$ is associated with $\phi_j\in \CC_N([-1,1])$ and the implicit constant is independent of the choice of $\phi_j$'s and $\delta,\varrho$.
Recall that 
\[\CB_{\delta, \varrho}^{\phi_1,\phi_2}(f,g) =\sum_{\rho\in\delta\BBZ\cap[0,1]} (S^{\phi_1}_{\rho,\delta}f)( S^{\phi_2}_{\varrho-\rho,\delta}g).\]
By Schwarz's inequality, for any $x\in \R^d$
\Be \label{cauchy-schwarz} 
|\CB_{\delta, \varrho}^{\phi_1,\phi_2}(f,g)(x)| \le\Big(\sum_{\rho\in\delta\Z\cap[0,1]} |S^{\phi_1}_{\rho,\delta}f(x)|^2\Big)^{1/2}\Big(\sum_{\rho\in\delta\Z\cap[0,1]}| S^{\phi_2}_{\varrho-\rho,\delta}g(x)|^2\Big)^{1/2}.
\Ee
In this case we only deal with the triple pair of exponents $(p,q,r)$ satisfying H\"older's relation. Hence, by H\"older's inequality, it suffices to show that 
\begin{equation}\label{eq:tem} \Big\|\Big(\sum_{\rho\in\delta\Z\cap[0,1]} |S^{\phi}_{\rho,\delta}f(x)|^2\Big)^{1/2}\Big\|_p\le C\delta^{-\beta_*(1/p)-\epsilon}\|f\|_p
\end{equation}
for $p\ge p_\circ.$ Indeed, since each $\varrho$ is small perturbation of $1$ and $q\ge p_\circ,$ the same argument which shows \eqref{eq:tem} implies the uniform bounds for $L^q$-estimate for $(\sum_{\rho\in \delta\Z\cap[0,1]}|S^{\phi}_{\varrho-\rho,\delta}g|^2)^{1/2}.$  
We now prove \eqref{eq:tem}. It is a consequence of the estimates for square functions $\mathfrak{D}^\phi_\delta$ in subsection \ref{sub:sq}. In fact, set $C_1=\delta_0^{-1}$ and decompose the interval $[C_1\delta,1]$ dyadically, i.e., we set
\begin{equation*}
 \bigcup_{k=0}^{k_o}\BI_k:= \bigcup_{k=0}^{k_o}[2^{-k-1},2^{-k}] \cap [C_1\delta,1]=[C_1\delta,1],  
\end{equation*}
where $k_o+1$ is the smallest integer satisfying $[2^{-k-1},2^{-k}] \cap [C_1\delta,1]=\emptyset$. By triangle inequality, we have
\Be\label{eq:tr} \begin{aligned}
\Big(\sum_{\rho\in\delta\Z\cap[0,1]} & |S^{\phi}_{\rho,\delta}f(x)|^2\Big)^{\frac12}
\\
\le &\Big(\sum_{\rho\in\delta\Z\cap[0,C_1\delta]} |S^{\phi}_{\rho,\delta}f(x)|^2\Big)^{\frac12}
+ \sum_{k=0}^{k_o}\Big(\sum_{\rho\in\delta\Z\cap\BI_k} |S^{\phi}_{\rho,\delta}f(x)|^2\Big)^{\frac12}.
\end{aligned}
\Ee
When $k=0$, Lemma \ref{lem:sq1} implies $\|(\sum_{\rho\in \delta\Z\cap \BI_0}|S^{\phi}_{\rho,\delta}f|^2)^{1/2}\|_p\le C_\epsilon \delta^{-\beta_*(1/p)-\epsilon}\|f\|_p$ holds uniformly for $\phi\in\CC_N([-1,1])$. By scaling $\xi\to 2^{-\frac k2}\xi$, it is easy to see that 
$  S^{\phi}_{\rho,\delta}f(x)=   S^{\phi}_{2^k\rho,  2^k\delta } \big(f( 2^{\frac k2}\cdot)\big)(2^{-\frac k2}x)$. 
Thus we have that 
\Be \label{scaled}
\|(\sum_{\rho\in \delta\Z\cap \BI_k}|S^{\phi}_{\rho,\delta}f|^2)^{1/2}\|_p
=2^{\frac {kd}{2p}} \|(\sum_{\rho\in 2^k\delta\Z\cap \BI_0}|S^{\phi}_{\rho,2^k\delta}  \big(f( 2^{\frac k2}\cdot)\big)|^2)^{1/2}\|_p.
\Ee
 Now, since  $2^k\delta\le \delta_0$,  using  Proposition \ref{prop:main} and recaling, we have   for $k\ge1$  
\[ \|(\sum_{\rho\in \delta\Z\cap \BI_k}|S^{\phi}_{\rho,\delta}f|^2)^{1/2}\|_p
\le C_\epsilon (2^k\delta)^{-\beta_*(1/p)-\epsilon}\|f\|_p. 
\]
 Since $\beta_*(1/p)>0$,  summing over $k$  we see that 
 \[  \sum_{k=0}^{k_o} \Big\| \Big(\sum_{\rho\in\delta\Z\cap\BI_k} |S^{\phi}_{\rho,\delta}f(x)|^2\Big)^{\frac12} \Big\|_{p} \le C_\epsilon \delta^{-\beta_*(1/p)-\epsilon} \|f\|_p\]
 For the first term in \eqref{eq:tr}, we recall from  Lemma \ref{lem:kernel} that  $\|K^{\phi}_{\rho,\delta}\|_1=O(1)$ for $\rho\lesssim \delta$. Thus, by Young's convolution inequality we have  that $\|S^{\phi}_{\rho,\delta}f\|_p\lesssim \|f\|_p$ for $\rho\lesssim \delta$.  There are only $O(1)$ many 
 $\rho\in\delta\Z\cap[0,C_1\delta]$. Thus it follows that 
 \[ \Big\| \Big(\sum_{\rho\in\delta\Z\cap[0,C_1\delta]} |S^{\phi}_{\rho,\delta}f(x)|^2\Big)^{\frac12} \Big\|_p\lesssim \|f\|_p.\] 
 Combining this with the above, we obtain \eqref{eq:tem}.

 We now consider the remaining  cases  $(p,q)=(2,2),  (2,\infty)$, $(\infty,2)$,  $(2, p_\circ)$, $(p_\circ, 2)$. The case  $(p,q)=(2,2)$ is already handled in Remark \ref{re:trivial}. It is sufficient to show \eqref{eq:TS} for  $(\infty,2)$,  $( p_\circ,2)$ since the other cases symmetrically follow by the same argument.   The proof of these two cases are rather straight forward.   From \eqref{cauchy-schwarz}, H\"older's inequality, \eqref{eq:tem}, and Plancherel's theorem  we have,  for $p\ge p_\circ$ and $\epsilon>0$, 
 \begin{align*}
 \|\CB^{\phi_1,\phi_2}_{\delta, \varrho}(f,g)\|_{r} 
 &  \le  \Big\|\Big(\sum_{\rho\in\delta\Z\cap[0,1]} |S^{\phi_1}_{\rho,\delta}f|^2\Big)^{1/2}\Big\|_p \Big\|\Big(\sum_{\rho\in\delta\Z\cap[0,1]} |S^{\phi_2}_{\varrho-\rho,\delta} g|^2\Big)^{1/2}\Big\|_2  
 \\ 
 &  \le C\delta^{-\beta_*(1/p)-\epsilon}\|f\|_p  \|g\|_2,
 \end{align*}
 where $1/r=1/p+1/2.$  This completes the proof. 
\end{proof}

\subsection{Sub-critical case:  $\frac1p+\frac1q \ge  \frac1r$ } In this subsection, we consider  $L^p\times L^q\to L^r$ boundedness  for the case $1/p+1/q\ge 1/r$.  For the rest of this section we set \[r_1=\frac{2(d+1)}{d-1},  \  r_2=\frac{2d}{d-2}.\]  The following is the main result of this section. 

\begin{theorem}\label{thm:sub}
Let $d\ge 2$, $2\le p,  q\le  \infty $ and $r\ge \frac{d+1}{d-1}$. If $1/p+1/q\ge 1/r$, then 
\begin{equation*}\label{eq:p'}
\|\CB^\alpha(f,g)\|_{L^{r}(\R^d)}\le C\|f\|_{L^p(\R^d)}\|g\|_{L^q(\R^d)}
\end{equation*}
holds for $\alpha>\gamma(p,q,r)$, where $\gamma(p,q,r)$ is defined as follows;
\[  \gamma(p,q,r)= \begin{cases} 
   \beta_*(1/p)+\beta_*(1/q) ,  & \text{ if }  \frac1r\le \frac1{r_1}+\frac 1{r_2},  \\
    \beta_*(1/p)+\beta_*(1/q)-\frac{d^2-d-1}{2(d+1)}+\frac d{2r},    &\text{ if }  \frac1{r_1}+\frac 1{r_2} \le \frac1r\le \frac2{r_1}.\\
\end{cases} 
\]
\end{theorem}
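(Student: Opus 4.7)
The plan is to reuse the reduction machinery from Section~\ref{sec:re}: by the dyadic decomposition \eqref{del-decomp} and Lemma~\ref{Lemm:reduction}, it is enough to prove, for each $\varrho\in[1/2,2]$ and for $\phi_1,\phi_2\in\CC_N([-1,1])$ with $N$ large enough, the uniform bound
\[
\|\CB_{\delta,\varrho}^{\phi_1,\phi_2}(f,g)\|_{r}\lesssim \delta^{-\gamma(p,q,r)-\epsilon}\,\|f\|_{p}\|g\|_{q}.
\]
As in the proof of Proposition~\ref{prop:TS1}, the pointwise Cauchy--Schwarz inequality \eqref{cauchy-schwarz} dominates the left hand side by $\mathfrak D_\delta^{\phi_1}f(x)\cdot \widetilde{\mathfrak D}_\delta^{\phi_2,\varrho}g(x)$, where $\widetilde{\mathfrak D}_\delta^{\phi_2,\varrho}g=\bigl(\sum_\rho|S_{\varrho-\rho,\delta}^{\phi_2}g|^2\bigr)^{1/2}$ satisfies the same $L^p$ estimates as $\mathfrak D_\delta^{\phi_2}$ after a harmless shift of index. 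H\"older's inequality with any splitting $1/r=1/a+1/b$ then reduces the task to $L^p\to L^a$ and $L^q\to L^b$ bounds for the discretised square function.

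The inputs I would combine are Proposition~\ref{prop:s-t}, giving $\|\mathfrak D_\delta^\phi f\|_a\lesssim\delta^{-\beta_*(1/p)-\epsilon}\|f\|_p$ whenever $a\ge r_1$ and $2\le p\le a$, the Plancherel estimate $\|\mathfrak D_\delta^\phi f\|_2\lesssim\|f\|_2$, and Riesz--Thorin interpolation between them. In the first case $1/r\le 1/r_1+1/r_2$ the stated range of $(p,q,r)$ permits a choice of $(a,b)$ satisfying $a\ge\max(p,r_1)$, $b\ge\max(q,r_1)$, and $1/a+1/b=1/r$; for example, $a=b=2r$ works whenever $p,q\le 2r$, exploiting $r\ge (d+1)/(d-1)=r_1/2$. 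Applying Proposition~\ref{prop:s-t} to each factor then produces the stated exponent $\beta_*(1/p)+\beta_*(1/q)$.

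In the second case $1/r_1+1/r_2\le 1/r\le 2/r_1$ no splitting with both $a,b\ge r_1$ can satisfy $a\ge p$ and $b\ge q$ for every admissible $(p,q)$, so Proposition~\ref{prop:s-t} alone no longer suffices. Here I would interpolate the Stein--Tomas bound with the Plancherel estimate to obtain $L^p\to L^a$ inequalities for $a$ in the intermediate range $[2,r_1]$, with a loss exponent that is a weighted combination of $\beta_*(1/p)$ and $0$. A bilinear interpolation of the resulting factor bounds then yields the composite exponent $\beta_*(1/p)+\beta_*(1/q)-\frac{d^2-d-1}{2(d+1)}+\frac{d}{2r}$; the correction $\frac{d}{2r}-\frac{d^2-d-1}{2(d+1)}$ vanishes at $1/r=1/r_1+1/r_2$ and grows to $\frac{1}{2(d+1)}$ at the endpoint $1/r=2/r_1$, so the two cases match continuously.

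The main obstacle is to execute the bilinear interpolation in the second case so that the resulting exponent matches $\gamma(p,q,r)$ exactly. One has to balance the Stein--Tomas loss $\beta_*(1/p)$ against the lossless $L^2$ endpoint through an interpolation weight $\theta$, chosen so that $1/a+1/b=1/r$ and the combined loss is optimised, and then repeat for the second factor. A careful case analysis in the $(1/p,1/q)$-plane, tracking which subregions of exponents force the interpolation to cross the $L^2$ endpoint, will be required to verify that every admissible $(p,q)$ is covered and that the correction term emerges with the claimed coefficients.
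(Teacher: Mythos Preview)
Your reduction via \eqref{del-decomp} and Lemma~\ref{Lemm:reduction} is the same as the paper's, but the route you take after that diverges in an essential way and leaves a real gap.

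The paper does \emph{not} go directly from $L^p\times L^q$ to $L^r$ via H\"older and off-diagonal square function bounds. Instead it first proves an $L^2\times L^2\to L^r$ estimate for $\CB_{\delta,\varrho}^{\phi_1,\phi_2}$ (Lemma~\ref{bibi}) and then upgrades to $L^p\times L^q\to L^r$ by the spatial localization argument of Lemma~\ref{lem:local}, which converts the kernel decay at scale $\delta^{-1}$ into exactly the exponent shift $d(1/p+1/q-1)$ that turns the $L^2\times L^2$ bound into $\gamma(p,q,r)$. The proof of Lemma~\ref{bibi} is where the real work is: the sum over $\rho$ is split into regions $I_1,I_2,I_3$ according to whether $\rho$ or $\varrho-\rho$ is small, and Cauchy--Schwarz is applied \emph{separately on each region}, so that in every piece at most one of the two square-function factors involves small radii.

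Your proposal applies Cauchy--Schwarz globally, producing two square functions each of which ranges over the full interval $[0,1]$ of radii. The claim that the shifted square function ``satisfies the same $L^p$ estimates as $\mathfrak D_\delta^{\phi_2}$ after a harmless shift of index'' is exactly where the argument breaks. Proposition~\ref{prop:s-t} is stated for $\rho\in[1/2,1]$; once small radii are included, rescaling and summing the dyadic shells gives the two-case estimate \eqref{eq:tem1}, and the favorable bound $\delta^{-\beta_*(1/p)}$ holds only when $\tfrac{d-1}{d}>\tfrac1p+\tfrac1a$. If you now try to choose $(a,b)$ with $1/a+1/b=1/r$ so that this condition holds for \emph{both} factors, you need $\tfrac1p+\tfrac1q+\tfrac1r<\tfrac{2(d-1)}{d}$. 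For $p=q=2$ and $r$ at the boundary of Case~1 this fails (e.g.\ $d=3$, $r=12/5$ gives $17/12>4/3$; for $d=2$ it fails for every finite $r$), so your scheme yields a strictly worse exponent than $\beta_*(1/p)+\beta_*(1/q)$ there. The same obstruction propagates into your Case~2 interpolation.

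What is lost by the global Cauchy--Schwarz is precisely the correlation that $\rho$ and $\varrho-\rho$ cannot both be small. Restoring this---either by splitting the $\rho$-sum before Cauchy--Schwarz as in Lemma~\ref{bibi}, or by passing through the $L^2\times L^2\to L^r$ endpoint and then invoking Lemma~\ref{lem:local}---is the missing ingredient.
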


Further estimates are possible if we interpolate the estimates in the above with those in Theorem \ref{thm:main}. 

Recall \eqref{eq1} and  note that  the operator $\CB^\alpha$ is well-defined for $\alpha>-1.$  For $\alpha\le -1$, $\CB^\alpha(f,g)/\Gamma(\alpha+1)$ is defined  by  analytic continuation. $L^p$-$L^q$ estimates for the classical Bochner-Riesz operator of negative order have been studied by several authors \cite{b,Bo,So,c-k-l-s} and its connection to the Bochner-Riesz conjecture is now well understood. It also seems to be  an interesting problem  to characterize $L^p\times L^q\to L^r$ boundedness of $\CB^\alpha$ of negative order, but  such attempt might be premature in view of current state of art.

We deduce the estimates in Theorem  \ref{thm:sub}  from easier $L^2\times L^2\to L^r$ {estimates}. For the purpose we make use of the following localization lemma. 

 \begin{lemma}\label{lem:local} Let $1\le p,q,r, p_0, q_0, r_0\le \infty$ satisfy $1/p+1/q\ge 1/r$  and $p_0\le p, q_0\le q$,  $r\le r_0$, and let $\varrho\in [1/2,2]$. Suppose $\|\CB_{\delta, \varrho}^{\phi_1, \phi_2}(f,g)\|_{r_0} \le C  \delta^B \|f\|_{p_0}\|g\|_{q_0}$  holds uniformly  provided that   $\phi_1$ and $\phi_2$ in $\CC_N([-1,1])$, then  for any $\epsilon>0$, there are constants $C_\epsilon$ and $N'$, such that 
 \Be \label{local-local}
 \|\CB_{\delta, \varrho}^{\phi_1, \phi_2}(f,g)\|_{r} \lesssim   \delta^B  \delta^{d(\frac1p+\frac1q-\frac1r- \frac1{p_0} -\frac1{q_0}+\frac1{r_0})-\epsilon}\|f\|_{p}\|g\|_{q}
 \Ee
 holds uinformly whenever  $\phi_1$ and $\phi_2$ in $\CC_{N'}([-1,1])$.
 \end{lemma}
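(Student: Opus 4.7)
The strategy is spatial localization. Lemma \ref{lem:kernel} says that each $S^{\phi_j}_{\rho,\delta}$ has a kernel of size $\lesssim \delta(1+\delta|x|)^{-N'}$ uniformly in $\rho \in [0,1]$, so its output at $x$ depends (modulo rapid decay) only on the inputs in a ball of radius $\sim \delta^{-1}$ around $x$. The plan is to partition $\R^d$ into cubes slightly coarser than $\delta^{-1}$, apply the hypothesis on each cube after localizing the inputs, and convert $(p_0,q_0,r_0)$ to $(p,q,r)$ via H\"older's inequality on the cube.

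Fix small $\epsilon'>0$ (to be chosen proportional to $\epsilon$). Partition $\R^d$ into disjoint cubes $\{Q_l\}_l$ of side $\delta^{-1-\epsilon'}$, and let $\tilde Q_l$ be the $3$-times dilate. Set $f_l := f\chi_{\tilde Q_l}$ and $g_l := g\chi_{\tilde Q_l}$. On $Q_l$ decompose
\[
\CB_{\delta,\varrho}^{\phi_1,\phi_2}(f,g) = \CB_{\delta,\varrho}^{\phi_1,\phi_2}(f_l,g_l) + \mathrm{Err}_l.
\]
For the main term the hypothesis applied in $L^{r_0}$, combined with H\"older's inequality on the cubes (using $r \le r_0$, $p_0 \le p$, $q_0 \le q$), yields
\[
\|\CB_{\delta,\varrho}^{\phi_1,\phi_2}(f_l,g_l)\|_{L^r(Q_l)} \lesssim \delta^B \, \delta^{-d(1+\epsilon')\theta}\,\|f_l\|_p\|g_l\|_q,
\]
where $\theta := (1/p_0 - 1/p) + (1/q_0 - 1/q) + (1/r - 1/r_0) \ge 0$. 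In each summand of $\mathrm{Err}_l$, at least one of $f-f_l$ or $g-g_l$ is supported outside $\tilde Q_l$, at distance $\gtrsim \delta^{-1-\epsilon'}$ from $Q_l$; splitting the kernel decay factor as $(1+\delta|x-y|)^{-N'} \le \delta^{\epsilon' N'/2}(1+\delta|x-y|)^{-N'/2}$ there extracts a gain $\delta^{\epsilon' N'/2}$. Taking $N'$ large enough (in terms of $N$ and $\epsilon'$) renders $\mathrm{Err}_l$ negligible; this pins down $N'$ in the statement.

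Raise to the $r$-th power and sum over $l$: with $a_l = \|f_l\|_p$ and $b_l = \|g_l\|_q$,
\[
\|\CB_{\delta,\varrho}^{\phi_1,\phi_2}(f,g)\|_r^r \lesssim \delta^{rB}\,\delta^{-rd(1+\epsilon')\theta}\sum_l a_l^r b_l^r.
\]
Bounded overlap of $\{\tilde Q_l\}$ gives $\|(a_l)\|_{\ell^p} \lesssim \|f\|_p$ and $\|(b_l)\|_{\ell^q} \lesssim \|g\|_q$. Since $1/p + 1/q \ge 1/r$, one can pick $p'\ge p$, $q'\ge q$ with $1/p' + 1/q' = 1/r$. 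H\"older in $\ell^r$ combined with the embeddings $\ell^p \hookrightarrow \ell^{p'}$ and $\ell^q \hookrightarrow \ell^{q'}$ then yields $(\sum_l a_l^r b_l^r)^{1/r} \lesssim \|f\|_p\|g\|_q$. Taking $\epsilon' \sim \epsilon/(d\theta+1)$ absorbs the remaining $\delta^{-d\epsilon'\theta}$ factor into $\delta^{-\epsilon}$, and \eqref{local-local} follows.

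I expect the trickiest part to be the error estimate. The sum defining $\CB_{\delta,\varrho}^{\phi_1,\phi_2}$ has $\sim \delta^{-1}$ summands in $\rho$, so one naively loses a power of $\delta$; this must be beaten by the gain $\delta^{\epsilon' N'/2}$ from kernel decay outside $\tilde Q_l$, forcing $N'$ to be taken much larger than both $N$ and $\epsilon'^{-1}$. The summation step also requires a small twist in the sub-critical case $1/p + 1/q > 1/r$, where the standard bilinear H\"older fails; the embeddings $\ell^p \hookrightarrow \ell^{p'}$ and $\ell^q \hookrightarrow \ell^{q'}$ are what rescues the argument.
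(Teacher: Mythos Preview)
Your proposal is correct and follows essentially the same spatial localization argument as the paper: decompose space at scale $\delta^{-1-\epsilon'}$, use the kernel bound from Lemma~\ref{lem:kernel} to control the tail error with an arbitrarily large gain $\delta^{\epsilon' K}$, apply the hypothesis plus H\"older on each piece, and sum using $1/p+1/q\ge 1/r$. The paper handles the error slightly differently---it bounds the tail contributions pointwise by a single global convolution $(E\ast|f|)(\mathfrak K\ast|g|)$ rather than cube-by-cube---but this is only a cosmetic difference, and your explicit use of the embeddings $\ell^p\hookrightarrow\ell^{p'}$, $\ell^q\hookrightarrow\ell^{q'}$ makes the summation step in the sub-critical case more transparent than the paper's one-line invocation of ``H\"older inequality again.''
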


By further refinement of the argument below it is possible to remove $\epsilon>0$.   This lemma can be obtained by adapting the localization argument used for the proof of Proposition \ref{prop:s-t}. Hence, we shall be brief.  

\begin{proof}  
Let $\epsilon'>0$. As in the proof of Proposition \ref{prop:s-t}, we localize $f$ and $g$ into $3\times \delta^{-1-\epsilon'}$-balls as follows: set $f_l =f\chi_{B(l,3\delta^{-1-\epsilon'})}$ and $g_l =g\chi_{B(l,3\delta^{-1-\epsilon'})}$ for $l\in \delta^{-1}\BBZ^d.$ Then for $x\in B(l,\delta^{-1-\epsilon'})$
\[ |S_{\rho,\delta}^{\phi_1}(f-f_l)(x)|\lesssim \delta E\ast |f|(x)\quad \mbox{and}\quad |S_{\varrho-\rho,\delta}^{\phi_2}(g-g_l)(x)|\lesssim \delta E\ast |g|(x)\]
for all $\rho\in [0,1]$ where $E(x)=\delta^{\epsilon K}(1+\delta|x|)^{-d-1}$ for any $K>0$ and the implicit constant depends on $K$.   Also note from Lemma \ref{lem:kernel} that  the convolution kernels of  $S_{\rho,\delta}^{\phi_1}$, $S_{\varrho-\rho,\delta}^{\phi_2}$ are bounded by $\mathfrak K(x):=C\delta(1+\delta |x|)^{-N}$  for any $N$. 
Thus, writing $S_{\rho, \delta}^{\phi_1} fS_{\varrho-\rho, \delta}^{\phi_2} g= S_{\rho, \delta}^{\phi_1} f_l  S_{\varrho-\rho, \delta}^{\phi_2} g_l+   S_{\rho, \delta}^{\phi_1} (f- f_l)S_{\varrho-\rho, \delta}^{\phi_2} g+   S_{\rho, \delta}^{\phi_1} f_l  S_{\varrho-\rho, \delta}^{\phi_2} (g -g_l) $ and using the above we see that, if $x\in B(l,\delta^{-1-\epsilon'})$, 
\[| \CB_{\delta, \varrho}^{\phi_1, \phi_2}(f,g)(x)| \lesssim  |\CB_{\delta, \varrho}^{\phi_1, \phi_2}(f_l,g_l)(x)| + (E\ast |f|)(x) (\mathfrak K \ast |g|)(x)+(\mathfrak K \ast |f|)(x) (E \ast |g|)(x).\] 
Since we can take $K$ arbitrarily large, the contribution from the last two terms in the right hand side  is negligible. Thus, it is sufficient to show 
\[   \big(\sum_{l\in \delta^{-1}\BBZ^d} \|\CB_{\delta, \varrho}^{\phi_1, \phi_2}(f_l,g_l)\|_{L^r(B(l,\delta^{-1-\epsilon'}))}^r \big)^\frac1r\lesssim   \delta^B  \delta^{d(\frac1p+\frac1q-\frac1r- \frac1{p_0} -\frac1{q_0}+\frac1{r_0})-\epsilon}\|f\|_{p}\|g\|_{q}.
\] 
Using the assumption $  \|\CB_{\delta, \varrho}^{\phi_1, \phi_2}(f,g)\|_{r_0} \lesssim   \delta^B \|f\|_{p_0}\|g\|_{q_0}$
and H\"older's inequality give 
\[  \|\CB_{\delta, \varrho}^{\phi_1, \phi_2}(f_l,g_l)\|_{L^r(B(l,\delta^{-1-\epsilon'}))} \lesssim  \delta^{-C\epsilon'}  \delta^B  \delta^{d(\frac1p+\frac1q-\frac1r- \frac1{p_0} -\frac1{q_0}+\frac1{r_0})}\|f_l\|_{p}\|g_l\|_{q}.\] 
Since $1/p+1/q\ge 1/r$,  by H\"older inequality again for summation along $l$, 
\begin{align*}
 \big(\sum_{l\in \delta^{-1}\BBZ^d} &\|\CB_{\delta, \varrho}^{\phi_1, \phi_2}(f_l,g_l)\|_{L^r(B(l,\delta^{-1-\epsilon'}))}^r \big)^\frac1r\\
 & \lesssim    \delta^{-C\epsilon'+B+d(\frac1p+\frac1q-\frac1r- \frac1{p_0} -\frac1{q_0}+\frac1{r_0})} \big(\sum_{l\in \delta^{-1}\BBZ^d} \|f_l\|_{p}^p \big)^\frac1p \big(\sum_{l\in \delta^{-1}\BBZ^d} \|g_l\|_{q}^q\big)^\frac1q. 
\end{align*}
This gives the desired bound if we take $\epsilon'=\epsilon/C$ with large enough $C$. 
\end{proof}

The following is a bilinear version of  Proposition \ref{prop:s-t}.

\begin{lemma} \label{bibi}  Let $0<\delta\ll 1$, $\varrho\in [1/2,2]$,{ and $r\ge \frac{d-1}{d+1}$}. Then, for $\epsilon>0$ there is $N=N(\epsilon)$ such that 
\Be\label{bi-l2}
 \|\CB_{\delta, \varrho}^{\phi_1, \phi_2}(f,g)\|_{r} \lesssim 
\begin{cases} 
 \delta^{1-\epsilon} \|f\|_2\|g\|_2  &      \text{ if }  \frac1r\le \frac1{r_1}+\frac 1{r_2},  \\
\delta^{\frac{d^2+d+1}{2(d+1)}-\frac d{2r}} \|f\|_2  \|g\|_2  &  \text{ if }  \frac1{r_1}+\frac 1{r_2} \le \frac1r\le \frac2{r_1}\\
\end{cases}
\Ee 
holds uniformly {in $\delta,\varrho$, and $\phi_1,\phi_2$,} whenever $\phi_1, \phi_2\in \CC_N([-1,1])$. 
\end{lemma}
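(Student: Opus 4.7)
The strategy is to apply pointwise Cauchy--Schwarz on the summation over $\rho$, reducing the bilinear bound to a product of $L^p$-norms of discrete square functions, and then to combine Hölder's inequality with the $L^2 \to L^p$ estimates for these square functions derived from Proposition 2.8. The Cauchy--Schwarz step gives
$|\CB^{\phi_1,\phi_2}_{\delta,\varrho}(f,g)(x)| \le \Bigl(\textstyle\sum_\rho |S^{\phi_1}_{\rho,\delta}f(x)|^2\Bigr)^{1/2}\Bigl(\sum_\rho |S^{\phi_2}_{\varrho-\rho,\delta}g(x)|^2\Bigr)^{1/2}$
for $\rho \in \delta\Z \cap [0,1]$; Hölder's inequality with any $1/p_1 + 1/p_2 = 1/r$ then separates the two factors.

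The $L^2 \to L^p$ bound for the resulting discrete square function (needed for $p \ge r_1$) is obtained from Proposition 2.8 together with a dyadic decomposition $\rho \in I_k = [2^{-k-1},2^{-k}]\cap[C\delta,1]$: each shell, rescaled via the scaling identity \eqref{scale-scale}, reduces to an instance of $\mathfrak{D}^\phi_{2^k\delta}$ on the standard range $[1/2,1]$, to which Proposition 2.8 applies at scale $2^k\delta$. The dyadic sum over $k$ converges (absorbed into a $\delta^{-\epsilon}$ loss) thanks to the $\beta_\ast(1/p) \ge 0$ gain available for $p \ge r_1$, yielding $\|(\sum_\rho |S^\phi_{\rho,\delta}h|^2)^{1/2}\|_p \lesssim \delta^{1/2 - \epsilon}\|h\|_2$.

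For case 1 ($1/r \le 1/r_1 + 1/r_2$), I choose $p_1 = r_1$ and $p_2 = (1/r - 1/r_1)^{-1}$, so that $p_2 \ge r_2 \ge r_1$; multiplying the two $\delta^{1/2-\epsilon}$ square function bounds produces the claimed $\|\CB\|_r \lesssim \delta^{1-\epsilon}\|f\|_2\|g\|_2$. For case 2 ($1/r_1 + 1/r_2 \le 1/r \le 2/r_1$), the stated bound arises by bilinear interpolation between the case 1 bound at $1/r = 1/r_1 + 1/r_2$ (exponent $1-\epsilon$) and the symmetric endpoint at $r = r_1/2$ (where choosing $p_1 = p_2 = r_1$ yields the endpoint exponent $(2d+1)/(2(d+1))$); linear interpolation in $1/r$ between these two exponents reproduces the claimed $(d^2+d+1)/(2(d+1)) - d/(2r)$.

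The main technical hurdle is the extension of the sharp square function bound from $\rho \in [1/2,1]$ to the full range $\rho \in [0,1]$. The rescaled estimate on each dyadic shell $I_k$ picks up a geometric amplification factor $2^{k[d/(2p) + 1/2 - d/4]}$ in the $L^p$-norm, which is strictly positive for $p \in [r_1, r_2)$ and in particular at the critical exponent $p = r_1$. Absorbing this amplification into the claimed $\delta^{1/2-\epsilon}$ decay requires either passing to an $L^p \to L^p$ formulation of the dyadic sum (as in the proof of Theorem \ref{thm:main}) combined with the implicit Fourier localization of $f$ and $g$ forced by the multiplier's support on $|\xi|^2 \le 1$, $|\eta|^2 \le 2$, or a more refined bilinear argument exploiting the coupling $|\xi|^2 + |\eta|^2 \approx \varrho$ so that frequencies of $f$ in the small-$\rho$ regime (where the individual square function bound degrades) are compensated by $g$'s frequencies concentrating where the estimate is sharp.
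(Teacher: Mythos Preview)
Your global Cauchy--Schwarz step is fine, but the claim that the full-range square function
$(\sum_{\rho\in\delta\Z\cap[0,1]}|S^\phi_{\rho,\delta}h|^2)^{1/2}$ satisfies an $L^2\to L^{p}$ bound of order $\delta^{1/2-\epsilon}$ for every $p\ge r_1$ is false, and this is where the argument breaks.  After rescaling the dyadic shell $I_k$ and applying Proposition~\ref{prop:s-t} at scale $2^k\delta$, the resulting geometric factor in $k$ is $2^{-k[(d-1)/2-d/4-d/(2p)]}$, whose exponent is \emph{negative} for all $p\in[r_1,r_2)$ (at $p=r_1$ it equals $-1/(2(d+1))$).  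Summing over $k$ therefore only gives $\|(\sum_\rho|S^\phi_{\rho,\delta}h|^2)^{1/2}\|_{r_1}\lesssim \delta^{d/(2(d+1))-\epsilon}\|h\|_2$, not $\delta^{1/2-\epsilon}$.  With your choice $p_1=r_1$, $p_2\ge r_2$ in case~1 you would obtain only $\delta^{d/(2(d+1))+1/2-\epsilon}=\delta^{(2d+1)/(2(d+1))-\epsilon}$, strictly worse than the required $\delta^{1-\epsilon}$.  Your case~2 endpoint computation inherits the same problem: with $p_1=p_2=r_1$ and the full-range square function you would get $\delta^{d/(d+1)-\epsilon}$, not $(2d+1)/(2(d+1))$ as you assert.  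Fourier localization of $f,g$ to $|\xi|,|\eta|\le\sqrt 2$ does not help, since the degradation comes precisely from annuli $|\xi|^2\sim 2^{-k}$ inside the unit ball.

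The fix is exactly the coupling you allude to at the end, but it has to be implemented \emph{before} Cauchy--Schwarz.  The paper splits the $\rho$-sum into regions $I_1=[0,2^{-4}]$, $I_2=[2^{-4},\varrho-2^{-4}]$, $I_3=[\varrho-2^{-4},\varrho+2^{-4}]$, so that on each piece at most one of the two radii $\rho,\ \varrho-\rho$ can be small while the other stays $\sim 1$.  One then applies Cauchy--Schwarz and H\"older asymmetrically on each piece: the factor whose radii are $\sim 1$ is placed in $L^{r_1}$ and enjoys the full $\delta^{1/2-\epsilon}$ bound (no small-radius loss), while the factor with possibly small radii is placed in $L^{\widetilde r_2}$ with $1/\widetilde r_2=1/r-1/r_1$ and is estimated by the degraded bound \eqref{l22}.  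In case~1 one has $\widetilde r_2\ge r_2$, so \eqref{l22} still gives $\delta^{1/2-\epsilon}$ and the product is $\delta^{1-\epsilon}$; in case~2 one has $r_1\le\widetilde r_2<r_2$ and the second case of \eqref{l22} produces exactly $\delta^{(d^2+d+1)/(2(d+1))-d/(2r)}$.  The key point is that the constraint $\rho+(\varrho-\rho)=\varrho\sim 1$ forbids both radii from being small simultaneously, and this must be exploited at the level of the decomposition, not after a symmetric Cauchy--Schwarz has already thrown the coupling away.
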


\begin{proof}  We start with observing the following:   For $2\le s\le r\le   \frac{2(d+1)}{d-1}$
and for any $\epsilon>0$ and $0<\delta\ll 1$, 
\begin{equation}
\label{eq:tem1}\Big\|\Big(\sum_{\rho\in\delta\Z\cap[0,1]} |S_{\rho,\delta}^\phi f(x)|^2\Big)^{1/2}\Big\|_r\lesssim 
\left\{ \begin{array}{ll} \delta^{-\beta_*(\frac1s)-\epsilon}\|f\|_s, &\mbox{if }~ \frac{d-1}d>\frac1s+\frac1r, \\
\delta^{-\frac d2(\frac1r-\frac1s)-\epsilon}\|f\|_s,  &\mbox{if }~ \frac{d-1}d\le\frac1s+\frac1r .
\end{array}\right.
\end{equation}
Indeed, by \eqref{scaled} and  Proposition \ref{prop:s-t}  we  have  that, for $2\le s\le r\le   \frac{2(d+1)}{d-1}$   and $k\ge 0$ and $\epsilon>0$,  
\[\Big\|\Big(\sum_{\rho\in \delta\Z\cap \BI_k}|S_{\rho,\delta}^\phi f|^2\Big)^{1/2}\Big\|_r\le C_\epsilon \delta^{-\beta_*(\frac1s)-\epsilon}2^{-k(\beta_*(\frac1s)+\frac d2(\frac1s-\frac 1r)+\epsilon)}\|f\|_s. \] 
Note that $\beta_*(\frac1s)+\frac d2(\frac1s-\frac 1r)=\frac{d-1}2-\frac{d}2(\frac1s+\frac1r)<0$ if $\frac{d-1}d<\frac1s+\frac1r$,  and $  \beta_*(\frac1s)+\frac d2(\frac1s-\frac 1r)\ge 0$ if $\frac{d-1}d\ge\frac1s+\frac1r$. Taking sum over $k$, we have \eqref{eq:tem1}.  Particularly, with $s=2$ we have 
\Be \label{l22}\Big\|\Big(\sum_{\rho\in\delta\Z\cap[0,1]} |S_{\rho,\delta}^\phi f(x)|^2\Big)^{1/2}\Big\|_r\lesssim 
\left\{ \begin{array}{ll} \delta^{\frac12-\epsilon} \|f\|_2, &\mbox{if }~ \frac{d-2}{2d}>\frac1r, \\
\delta^{\frac{d}{4}-\frac {d}{2r}-\epsilon}\|f\|_2,  &\mbox{if }~ \frac{d-2}{2d}\le \frac1r  \le \frac{d-1}{2(d+1)}\,.
\end{array}\right.
\Ee

Let us set $I_1=[0,2^{-4}]\cap [0,1]$, $I_2=[2^{-4}, \varrho-2^{-4} ]\cap[0,1] $, $I_3=[ \varrho-2^{-4}, \varrho+2^{-4} ]\cap[0,1]$,   $I_4=[  \varrho+2^{-4}, \infty)\cap[0,1]$. Depending on $\varrho$, $I_3$ and $I_4$ can be  an empty set. For $i=1,\dots, 4$, we set 
\[   \CB_i(f,g)=  \sum_{\rho\in\delta\mathbb Z\cap I_i}   |S_{\rho,\delta}^{\phi_1} f S_{\varrho-\rho,\delta}^{\phi_2}g|.\] 
Thus, we have
\begin{align*}
  |\CB_{\delta, \varrho}^{\phi_1, \phi_2}(f,g)|\le     \sum_{i=1}^4 \CB_i  (f,g). 
    \end{align*}
Note that if $\rho\in I_4$ then $\varrho -\rho \le -2^{-4}$, hence the Fourier support of $S^{\phi_2}_{\varrho-\rho,\delta}g$ is an empty set and $\CB_4(f,g)\equiv 0$ if $0<\delta<2^{-4}.$ Thus it is enough to deal with $\CB_1,\CB_2$, and $\CB_3$.
$\CB_2$ can be handled by using the estimates in Proposition \ref{prop:s-t}. In fact, 
 \[ 
 \CB_2(f,g)\le  \mathfrak D_a(f) \mathfrak D_b(g) 
  \] 
  where
\[   \mathfrak D_a (f) := \Big(\sum_{\rho\in \delta\Z\cap [2^{-4}, 1]}|S_{\rho,\delta}^{\phi_1} f|^2\Big)^{\frac12}, \   
  \mathfrak D_b(g) :=\Big(\sum_{\rho\in \delta\Z\cap [2^{-4},\varrho-2^{-4}]}|S_{\varrho- \rho,\delta}^{\phi_2 }g|^2\Big)^{\frac12}.    \] 
Since all the radii appearing in  $ \mathfrak D_a (f)$ and $ \mathfrak D_b (g)$ are $\sim 1$,  from a slight modification of proof of Proposition \ref{prop:s-t},  it is easy to see that $\mathfrak D_a (f) $  and  $\mathfrak D_b(g)$ satisfy the same estimate for $ \mathfrak{D}^\phi_\delta f$ which is in Proposition \ref{prop:s-t}.  
Thus,  using $L^2\to L^r,$ $r\ge  \frac{2(d+1)}{d-1}$ estimates for $\mathfrak D_a (f) $  and  $\mathfrak D_b(f)$ and  H\"older's inequality we see that, for $r\ge \frac{d+1}{d-1}$ and $\epsilon>0$,  
\[   \|\CB_2(f,g) \|_{r} \lesssim   \delta \|f\|_2\|g\|_2. \]  
This estimate is acceptable in view of the desired estimate. Hence, we are reduced to handling $\CB_1, \CB_3$ which are of similar nature.

We only handle $\CB_3$ since $\CB_1$ can be handled similarly. Now we note that 
 \[ 
 \CB_3(f,g)\le  \mathfrak D_c(f) \mathfrak D_d(g), 
  \] 
  where
\[   \mathfrak D_c (f) := \Big(\sum_{\rho\in \delta\Z\cap [2^{-3}, 1]}|S_{\rho,\delta}^{\phi_1} f|^2\Big)^{\frac12}, \   
  \mathfrak D_d(g) :=\Big(\sum_{\rho\in \delta\Z\cap [ \varrho-2^{-4}, \varrho+2^{-4} ]}|S_{\varrho- \rho,\delta}^{\phi_2 }g|^2\Big)^{\frac12}.    \] 
$ \mathfrak D_c (f) $ enjoys the same estimates  for $\mathfrak D_a (f)$ and $\mathfrak D_b (f)$ since the associated radii are $\sim 1$, whereas there are small radii  in  $\mathfrak D_d(g)$. It is easy to see that the estimate \eqref{l22} also holds for  $\mathfrak D_d(g)$. If $1/r\le 1/r_1+1/r_2$, then we may choose $\widetilde r_1$, $\widetilde r_2$ such that  $1/\widetilde r_1+1/ \widetilde r_2=1/r  $, and $ \widetilde r_1\ge r_1$, $\widetilde r_2\ge r_2$.  So using  $L^2\to L^r,$  $r\ge  \frac{2(d+1)}{d-1}$ estimates for $\mathfrak D_c (f) $ and the first estimate in \eqref{l22} for $\mathfrak D_d (g) $, we get 
\[   \|  \CB_3(f,g)\|_{r} \lesssim    \|  \mathfrak D_c (f) \|_{\widetilde r_1} \| \mathfrak D_d (g)\|_{\widetilde r_2}\lesssim \delta^{1-\epsilon} \|f\|_2\|g\|_2.     \] 
If $1/r_1+1/r_2<1/r\le 2/r_1$, we  take  $\widetilde r_1=r_1$ and $\widetilde r_2$ such that $1/\widetilde r_2=1/r-1/r_1$. Thus 
$r_1\le \widetilde r_2 < r_2$. Similarly as before,  using  both cases in \eqref{l22} we obtain 
\begin{align*}
   & \qquad \|  \CB_3(f,g)\|_{r}  \lesssim    \|  \mathfrak D_c (f) \|_{\widetilde r_1} \| \mathfrak D_d (g)\|_{\widetilde r_2}  \\ 
  \lesssim & 
\delta^\frac12  \delta^{\frac d4-\frac d2(\frac1r-\frac1{r_1})} \|f\|_2  \|g\|_2=\delta^{\frac{d^2+d+1}{2(d+1)}-\frac d{2r}} \|f\|_2  \|g\|_2.  
\end{align*} 
By the same argument as before it is easy to see that the same estimates also hold for $\CB_1$.  This completes the proof. 
\end{proof}

Finally we prove Theorem \ref{thm:sub} by making use of Lemma \ref{lem:local} and Lemma \ref{bibi}.

\begin{proof} [Proof of Theorem \ref{thm:sub}]
It is easy to see that $\gamma(p,q,r)\ge-1$ for $2\le p,q\le \infty$ and $r\ge \frac{d+1}{d-1}.$ Thus, combining Lemma \ref{lem:local} 
and Lemma \ref{bibi}, we have that,  for $2\le p,  q\le  \infty $ and $r\ge \frac{d+1}{d-1}$ satisfying $1/p+1/q\ge 1/r$,
\[\|\CB_{\delta, \varrho}^{\phi_1, \phi_2}(f,g)\|_{r} \lesssim   \delta^{-\gamma(p,q,r)-\epsilon}\|f\|_{p}\|g\|_{q}.\]
Since $\gamma(p,q,r)\ge-1$, we use  Lemma \ref{Lemm:reduction}  and \eqref{del-decomp} to obtain all the estimates in  Theorem \ref{thm:sub}. 
\end{proof}

\subsection{Lower bound for smoothing order $\alpha$ }\label{necessary}   {Similarly, as in case of  linear multiplier operator,} bilinear multiplier operator also have kernel expressions. We write $\CB^\alpha$ as 
\begin{equation}
\label{express}
 \CB^\alpha(f,g)(x)=\iint K^\alpha(x-y,x-z)f(y)g(z)dydz,\quad f,g\in \CS(\R^d),
 \end{equation}
where $K^\alpha=\CF^{-1}((1-|\xi|^2-|\eta|^2)^\alpha_+) $. Note that $K^\alpha$ is the kernel of the Bochner-Riesz operator $\CR^\alpha_1$ in $\R^{2d}$. From  the estimate for $K^\alpha$ in $\R^{2d}$ and duality, the necessary condition for $\CR^\alpha_1$ was obtained. Similar idea was used in \cite{b-g-s-y} to find some necessary conditions on $p,q$ for the boundedness of the operator $\mathcal B^\alpha$.
\begin{prop}\cite[Proposition 4.2]{b-g-s-y}\label{prop:ne} Let $1\le p,q\le \infty$ and $0<r\le \infty$ with $\frac1r=\frac1p+\frac1q$. \\
(i) If $\alpha\le d(\frac1r -1)-\frac12$, then $\CB^\alpha$ is unbounded from $L^p(\R^d)\times L^q(\R^d)$ to $L^r(\R^d).$\\
(ii) If $\alpha\le d |\frac1p-\frac12|-\frac12,$ then $\CB^\alpha$ is unbounded from $L^p(\R^d)\times L^\infty(\R^d)$ to $L^p(\R^d)$, from $L^\infty(\R^d)\times L^p(\R^d)$ to $L^p(\R^d)$, and also from $L^p(\R^d)\times L^{p'}(\R^d)$ to $L^1(\R^d)$ for each $1\le p\le \infty.$ 
\end{prop}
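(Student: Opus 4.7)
The plan is to exploit the fact that $K^\alpha = \CF^{-1}((1-|\Xi|^2)^\alpha_+)$ is the Bochner--Riesz kernel of order $\alpha$ in $\R^{2d}$, for which the classical Bessel-function asymptotic yields
\[
K^\alpha(X) = c_\alpha|X|^{-d-\alpha-\frac12}\cos(2\pi|X|-\phi_{d,\alpha}) + O(|X|^{-d-\alpha-\frac32})
\]
as $|X|\to\infty$. Parts (i) and (ii) will be reduced, respectively, to a dyadic-annulus integrability computation on the diagonal of $\R^{2d}$, and to the classical Herz necessary condition $\alpha > d|\tfrac1p-\tfrac12|-\tfrac12$ for $L^p$-boundedness of the linear Bochner--Riesz operator $\CR^\alpha_1$ on $\R^d$.

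For part (i), I would choose $\psi\in\CS(\R^d)$ so that $\widehat\psi$ is an even nonnegative Schwartz-class function equal to $1$ on a thin neighborhood of the sphere $\{|\xi|=1/\sqrt2\}\subset\R^d$, and test with $f=g=\psi$. Setting $\Psi=\psi\otimes\psi$, the identity $\CB^\alpha(\psi,\psi)(x)=(K^\alpha*\Psi)(x,x)$ combined with the stationary-phase asymptotic
\[
(K^\alpha*\Psi)(X) \sim c\,|X|^{-d-\alpha-\frac12}\,\widehat\Psi(X/|X|)\cos(2\pi|X|-\phi),\qquad |X|\to\infty,
\]
(whose leading coefficient is nonzero on the diagonal $X=(x,x)$ because $\widehat\Psi(\hat x/\sqrt 2,\hat x/\sqrt 2)=\widehat\psi(\hat x/\sqrt 2)^2=1$ by construction) yields
\(
|\CB^\alpha(\psi,\psi)(x)|\sim c|x|^{-d-\alpha-\frac12}\big|\cos(2\pi\sqrt2\,|x|-\phi)\big|.
\)
On the set $\{|\cos(2\pi\sqrt2|x|-\phi)|\ge\tfrac12\}$, which has positive relative measure in every dyadic annulus $|x|\sim 2^k$,
\[
\int_{|x|\sim 2^k}|\CB^\alpha(\psi,\psi)|^r\,dx \gtrsim 2^{k(d-r(d+\alpha+\frac12))},
\]
so the total integral diverges precisely when $\alpha\le d(\tfrac1r-1)-\tfrac12$. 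Since $\psi\in\CS\subset L^p\cap L^q$, this rules out $L^p\times L^q\to L^r$ boundedness.

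For part (ii), I would approximate $1$ by $g_n(x)=\eta(x/n)$ for a fixed $\eta\in\CS$ with $\eta(0)=1$; then $\|g_n\|_\infty=\|\eta\|_\infty$ is uniformly bounded and $\widehat{g_n}$ concentrates at the origin with total mass $1$. Dominated convergence applied to the frequency representation gives, for every $f\in\CS$, the pointwise limit $\CB^\alpha(f,g_n)(x)\to\CR^\alpha_1 f(x)$. Applying Fatou's lemma (or its $L^\infty$ analogue) to a hypothetical bound $\|\CB^\alpha(f,g_n)\|_p\le A\|f\|_p\|g_n\|_\infty$ yields $\|\CR^\alpha_1 f\|_p\le A\|\eta\|_\infty\|f\|_p$, which by the Herz necessary condition forces $\alpha>d|\tfrac1p-\tfrac12|-\tfrac12$. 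The $L^\infty\times L^p\to L^p$ estimate is symmetric because $\CB^\alpha(f,g)=\CB^\alpha(g,f)$. For $L^p\times L^{p'}\to L^1$ I would invoke the trilinear duality identity
\[
\int\CB^\alpha(f,g)\,h\,dx = \int g(z)\,\widetilde\CB^\alpha(f,h)(z)\,dz,
\]
valid after the substitution $\eta'=-\xi-\eta$ in the frequency representation, where
\[
\widetilde\CB^\alpha(f,h)(z)=\iint e^{2\pi i z(\xi+\eta)}(1-|\xi|^2-|\xi+\eta|^2)^\alpha_+\widehat f(\xi)\widehat h(\eta)\,d\xi\,d\eta.
\]
Thus $\CB^\alpha\colon L^p\times L^{p'}\to L^1$ is bounded iff $\widetilde\CB^\alpha\colon L^p\times L^\infty\to L^p$ is; running the same approximation $h_n\to 1$ in the second slot produces $\widetilde\CB^\alpha(f,h_n)(z)\to\CR^\alpha_{1/\sqrt2}f(z)$ pointwise, and the Herz condition for $\CR^\alpha_{1/\sqrt2}$ (equivalent to that for $\CR^\alpha_1$ by dilation) forces the same inequality.

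The main obstacle is the rigorous leading-order stationary-phase analysis for $(K^\alpha*\Psi)(X)$ in part (i): one must verify that the leading coefficient does not vanish on the diagonal (handled by the explicit choice of $\widehat\psi$ concentrated near the sphere of radius $1/\sqrt2$ rather than near the origin) and control the oscillatory factor on dyadic annuli well enough that cancellation does not remove the divergence. Once this asymptotic is in place, part (ii) is a routine Fatou/duality reduction to the classical linear Bochner--Riesz necessary condition.
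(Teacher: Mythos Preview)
Your proposal is correct. The paper does not give a full proof of this proposition (it is quoted from \cite{b-g-s-y}) but only a two-line sketch, so there is little to compare against; nonetheless, one simplification is worth pointing out.

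For part (i) you are working harder than necessary. The paper's sketch takes Schwartz $f,g$ with $\widehat f=\widehat g=1$ on $B(0,2)$, so that the multiplier is untouched and $\CB^\alpha(f,g)(x)=K^\alpha(x,x)$ \emph{exactly}; no convolution $K^\alpha\ast\Psi$ appears and no stationary-phase analysis of such a convolution is needed. The standard Bessel asymptotic for the $2d$-dimensional Bochner--Riesz kernel then gives $K^\alpha(x,x)\sim c|x|^{-d-\alpha-\frac12}\cos(2\pi\sqrt2\,|x|-\phi)$ directly, and your dyadic-annulus computation finishes the job. This simpler choice of test functions dissolves the ``main obstacle'' you flag (non-vanishing of the leading coefficient on the diagonal), since there is nothing to convolve and hence no coefficient to track. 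Your choice of $\widehat\psi$ concentrated near the sphere $|\xi|=1/\sqrt2$ does work, but it is an unnecessary detour.

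For part (ii) the paper says only that it ``is a simple consequence of the linear theory''; your Fatou/approximation argument $g_n(x)=\eta(x/n)\to 1$ recovering $\CR^\alpha_1$ in the limit, together with the trilinear-duality reduction of the $L^p\times L^{p'}\to L^1$ case to an $L^p\times L^\infty\to L^p$ bound for the adjoint bilinear operator, is a clean and correct way to make that sentence precise.
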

The first result in Proposition \ref{prop:ne} follows from the decay of kernel, since $\CB^\alpha(f,g)=K^\alpha$ if $\widehat{f}=\widehat{g}=1$ on $B(0,2)$. The second one  is a simple consequence of the linear theory.  Unfortunately, these results do not give meaningful necessary condition for the Banach case, since $d(\frac1r -1)-\frac12<0$ when $r\ge 1.$ 
The following  gives better lower bound.

\begin{prop}\label{prop:nc}
 Let $1\le p,q,r\le \infty$. If $\CB^\alpha$ is bounded from  $L^p(\R^d)\times L^q(\R^d)$
to $ L^{r}(\R^d)$, then 
\begin{align*}
\alpha 
&\ge\max\Big\{ \frac{d-1}2-\frac d{p}-\frac d{2q},\,\frac{d-1}2-\frac dq-\frac d{2p},\, 0\Big\}.
\end{align*}
\end{prop}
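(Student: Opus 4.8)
The plan is to exhibit test functions that isolate each of the three lower bounds. The bound $\alpha\ge 0$ is immediate: if $\alpha<0$ the symbol $m^\alpha$ is unbounded near the sphere $|\xi|^2+|\eta|^2=1$, so $\CB^\alpha$ fails to be bounded even on the trivial range; alternatively it follows from the necessity of $1/p+1/q\ge 1/r$ together with Proposition~\ref{prop:ne}(i). The substance is in the two symmetric bounds, and by the $\xi\leftrightarrow\eta$ symmetry of $m^\alpha$ (which swaps the roles of $p$ and $q$) it suffices to prove $\alpha\ge \tfrac{d-1}2-\tfrac dp-\tfrac d{2q}$.

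The idea is to freeze the $\eta$-variable near a point away from the origin and run the classical Knapp-type/Bochner-Riesz counterexample in the $\xi$-variable, while using duality in the second slot to convert the $L^q$ norm of $g$ into an $L^{q'}$-pairing. Concretely: choose $g$ (or rather test against $g$ via a third function $h$) so that $\widehat g$ is a smooth bump of height $1$ supported in a ball $B(\eta_0,c)$ with $|\eta_0|<1$, $c$ small; then on the support of $\widehat f\otimes\widehat g$ the multiplier $(1-|\xi|^2-|\eta|^2)^\alpha_+$ behaves, as a function of $\xi$, like a translated and dilated Bochner-Riesz multiplier $(\rho-|\xi-\text{(lower order)}|^2)^\alpha_+$ with $\rho=1-|\eta_0|^2\sim 1$. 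The standard example showing $\CR^\alpha_1$ is unbounded on $L^p(\R^d)$ for $\alpha\le d|1/2-1/p|-1/2$ — take $\widehat f$ to be a smooth cutoff to a $\delta$-neighborhood of the sphere $|\xi|^2=\rho$, so that $f$ is roughly $\delta^{-1}$-concentrated on a dual slab and decays like $|x|^{-\frac{d+1}2-\alpha}$ away from it — then forces, after also accounting for the $L^{r}\!\to\! L^{r}$ pairing against $h\in L^{r'}$ localized on the unit ball, the inequality $\alpha\ge \tfrac{d-1}2-d(1/p)-\cdots$. The extra $-\tfrac d{2q}$ term appears because the $\eta$-variable is free to roam over a full ball of radius $\sim 1$ rather than a slab: the kernel $K^\alpha(x-y,x-z)$ has its $z$-decay governed by $2d$-dimensional Bochner-Riesz asymptotics, but only the $\xi$-slab is thin, so one gains a full $d$-dimensional factor in $z$ which, paired with $\|g\|_q$, produces the $\tfrac d{2q}$. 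I would make this precise by writing $\CB^\alpha(f,g)$ via \eqref{express}, inserting the kernel asymptotics $K^\alpha(v,w)\sim (1+|v|^2+|w|^2)^{-\frac{2d+1}2-\alpha}$ times an oscillatory factor, and estimating the resulting double integral from below on the region $|x|\lesssim 1$.

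The main obstacle I expect is bookkeeping the exponents correctly when the $L^r$ side is \emph{not} in the Banach range or when $1/p+1/q>1/r$ strictly: one must be careful that the test functions are genuinely normalized in $L^p$ and $L^q$ and that the lower bound for $\|\CB^\alpha(f,g)\|_r$ is computed on a region large enough to see the slow kernel decay but small enough that $f,g$ look like the model objects. A clean way around this is to use the bilinear duality $\|\CB^\alpha\|_{L^p\times L^q\to L^r}\gtrsim |\langle \CB^\alpha(f,g),h\rangle|/(\|f\|_p\|g\|_q\|h\|_{r'})$ with all three of $\widehat g, \widehat h$ (and a rescaled $\widehat f$) chosen to be fixed smooth bumps except for the thin slab in $\widehat f$, so that the pairing reduces to a single scalar integral whose size in $\delta$ is transparent; letting $\delta\to 0$ then yields the stated inequality. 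The symmetric bound follows by interchanging $f$ and $g$, and taking the maximum of the three completes the proof.
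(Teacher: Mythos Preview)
There is a genuine gap. Your plan localizes $\widehat g$ near a point $\eta_0$ with $|\eta_0|<1$ and then runs a linear Bochner--Riesz counterexample in $\xi$; but once $\widehat g$ (and $\widehat h$) are \emph{fixed} smooth bumps, $g$ and $h$ are Schwartz functions that localize the pairing $\langle \CB^\alpha(f,g),h\rangle$ to a bounded region of physical space, so the long-range decay $|x|^{-(d+1)/2-\alpha}$ that drives the classical counterexample is never felt. Letting the $\xi$-slab thickness $\delta\to0$ then does not produce the claimed blow-up, and in particular there is no mechanism in your outline that generates the mixed term $-\tfrac d{2q}$; the paragraph explaining it (``the $\eta$-variable is free to roam \ldots only the $\xi$-slab is thin'') does not correspond to any actual computation.

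The paper's argument is different in two essential ways. First, it localizes the symbol near the point $(0,e_d)$ \emph{on} the sphere $|\xi|^2+|\eta|^2=1$ (not strictly inside), so that the localized multiplier still carries the full $2d$-dimensional Bochner--Riesz singularity and its inverse Fourier transform obeys $\CK^\alpha(w)=e^{i|w|}a(w)|w|^{-(2d+1)/2-\alpha}$ on a cone around $(0,e_d)$. Second, and this is the key idea you are missing, the test functions are chosen in \emph{physical} space at two different scales: $f=\chi_{A_R}$ with $|y|\sim R^{1/2}$ and $g=e^{-i|z|}\chi_{B_R}$ with $|z|\sim R$. The demodulation $e^{-i|z|}$ cancels the kernel phase, and the scale relation $|y|\lesssim |z|^{1/2}$ ensures $|(y,z)|-|z|=O(|y|^2/|z|)=O(1)$, so there is no oscillation loss. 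The integral then has size $R^{d/2}\cdot R^{d}\cdot R^{-(2d+1)/2-\alpha}=R^{(d-1)/2-\alpha}$, while $\|f\|_p\|g\|_q\sim R^{d/(2p)}R^{d/q}$; letting $R\to\infty$ gives $\alpha\ge \tfrac{d-1}2-\tfrac d{2p}-\tfrac dq$, and swapping $\xi\leftrightarrow\eta$ gives the other inequality. The asymmetric exponents $\tfrac d{2p}$ versus $\tfrac dq$ come precisely from this two-scale choice, not from any Knapp slab.
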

\begin{proof}
Let $\psi_\epsilon(\xi,\eta)=\phi_1(\xi/\epsilon)\phi_2(
\eta'/\epsilon)\phi_3((1-\eta_d)/\epsilon)$ where
$\eta'=(\eta_1,\dots, \eta_{d-1})$ and $\phi_1, \phi_2, \phi_3$
are nontrivial smooth functions supported in $B(0,1)$. Then $L^p\times L^q \to
L^{r}$ boundedness of $\CB^\alpha$ implies $L^p\times L^q \to
L^{r}$ boundedness of $\widetilde \CB^\alpha$ defined by
\[\widetilde \CB^\alpha(f,g)=
\int e^{2\pi ix\cdot(\xi+\eta)} \psi_\epsilon(\xi,\eta)(1-|\xi|^2-|\eta|^2)_+^\alpha\, \widehat f(\xi)\widehat g(\eta)\, d\xi d\eta.\]
 We
first note that
\begin{align*}
&\qquad\qquad\qquad\qquad \int \widetilde \CB^\alpha(f,g)(x) \phi(x) dx=\\
&\iint \iint \psi_\epsilon(\xi,\eta) (1-|\xi|^2-|\eta|^2)_+^\alpha\, \phi^\vee (\xi+\eta)
e^{-2\pi i(y\cdot\xi+z\cdot\eta)} d\xi d\eta f(y)g(z)dy dz.\end{align*} Choosing a
Schwartz function $\phi$  such that $\widehat \phi=1$ on $B(0,\sqrt
2)$, it follows that
\[\int \widetilde \CB^\alpha(f,g)(x) \phi(x) dx= \iint  \CK^\alpha(y,z) f(y)g(z)dy dz,\]
where
$\CK^\alpha=\CF^{-1}\big(\psi_\epsilon(\xi,\eta)(1-|\xi|^2-|\eta|^2)_+^\alpha\big)$.
Hence, $L^p\times L^q\to L^r$ boundedness of
$\widetilde\CB^\alpha$ implies
\begin{equation}\label{eq:1}
\Big|\int \CK^\alpha(y,z) f(y)g(z) dydz\Big|\lesssim \|f\|_p\|g\|_q.
\end{equation}
We choose a small enough $\epsilon>0$. By making use of  stationary
phase method (in fact, Fourier transform of measure supported in
sphere, for example, see \cite[p.68]{sogge}), for $w=(y,z)$ in a narrow conic neighborhood
$\mathcal C$ of $(0,e_d)\in \mathbb R^d\times \mathbb R^d$, to say
$\mathcal C=\{(y,z): \sqrt{|y|^2+|z'|^2}\le \epsilon_0 z_d\}$
for a small enough $\epsilon_0$,
\[\CK^\alpha(w)= {e^{i|w|} a(w)}{|w|^{-\frac{2d+1}{2}-\alpha}},\]
where $a$ is radial and  $|a(w)|\ge c>0$ if $|w|$ is large enough.
Let $R\gg \epsilon_0^{-100}$ and
set $A_R=\{x: (\epsilon_0/10)R^{1/2}\le
|x|< (\epsilon_0/5)R^{1/2}\}$ and $B_R=\{x: (\epsilon_0/10)R\le |x|\le (\epsilon_0/5)R,\, |x'|\le (\epsilon_0/10)|x_d| \}.$ Then  $A_R\times B_R\subset
\mathcal C$. We now set
\[f(y)=\chi_{A_R}(y) \,, \quad g(z)= \chi_{B_R} (z)e^{-i|z|}.\]
Thus,
\begin{equation*}
\Big|\int \CK^\alpha(y,z) f(y)g(z) dydz\Big|
=\Big|\int_{A_R}\int_{B_R}e^{i(|w|-|z|)}
a(w){|w|^{-\frac{2d+1}{2}-\alpha}}  dydz\Big|\gtrsim
R^{\frac{d-1}2-\alpha}
\end{equation*}
because $||w|-|z||=O(|y|^2/|z|)\le 1/4$ for large $R$. 
Moreover, by \eqref{eq:1}  we get
\[ \Big|\int \CK^\alpha(y,z) f(y) g(z) dydz\Big|\lesssim R^\frac d{2p} R^\frac dq.\]
 Combining the above two estimates and \eqref{eq:1}, the inequality $R^{-\alpha+\frac{d-1}2}\lesssim R^\frac d{2p} R^\frac dq$ should hold for any $R\gg \epsilon_0^{-100}$. Letting $R\to \infty$ gives $\alpha\ge\frac{d-1}2-\frac d{2p}-\frac dq$. If we exchange the role of $\xi$ and $\eta$ in the function $\psi_{\epsilon}$ (i.e., $\psi_{\epsilon}(\eta,\xi)$ instead of $\psi_{\epsilon}(\xi,\eta)$), we have the other condition $\alpha\ge\frac{d-1}2-\frac d{p}-\frac d{2q}$.
\end{proof}

\end{section}


\bibliographystyle{amsalpha}

\begin{thebibliography}{}
\bibitem{b}
 J.-G. Bak, {\it Sharp estimates for the Bochner-Riesz operator of negative order in $\R^2$,} Proc. Amer. Math. Soc. {\bf 125} (1997), no. 7, 1977--1986.

\bibitem{b-f-r-v} J. A. Barcel\'o, D. Faraco, A. Ruiz, and A. Vargas {\it Reconstruction of singularities from full scattering data by new estimates of bilinear Fourier multipliers,} Math. Ann. {\bf 346} (2010), no. 3, 505--544.
  
\bibitem{b-t} \'A. B\'enyi and R.H. Torres, {\it Almost orthogonality and a class of bounded bilinear pseudodifferential operators.} Math. Res. Lett. {\bf11} (2004), no. 1, 1--11.

\bibitem{b-ge} F. Bernicot and P. Germain, {\it Boundedness of bilinear multipliers whose symbols have a narrow support,} J. Anal. Mathe., {\bf119} (2013), 165-212.

\bibitem{b-g-s-y} F. Bernicot, L. Grafakos, L. Song, and L. Yan, {\it The bilinear Bochner-Riesz problem,}  J. Anal. Math. {\bf 127} (2015), 179-217.

\bibitem{Bo} L. B\"orjeson, \emph{Estimates for the Bochner-Riesz operator with negative index}, Indiana U.
Math. J. {\bf 35} (1986), no. 2, 225--233.

\bibitem{bourgain} J. Bourgain, {\it Besicovitch type maximal operators and applications to Fourier analysis,} Geom. Funct. Anal. {\bf 1} (1991), 147--187.

\bibitem{b-g} J. Bourgain and L. Guth, {\it Bounds on oscillatory integral operators based on multiplier estimates,} Geom. Funct. Anal., {\bf21} (2011) 1239--1295.

\bibitem{c} A. Carbery, {\it The boundedness of the maximal Bochner-Riesz operator on $L^4(\R^2)$,} Duke Math. J. {\bf 50} (1983), 409--416. 

\bibitem{c-g-t} A. Carbery, G. Gasper, and W. Trebels, {\it Radial Fourier multipliers of $L^p(\R^2)$}, Proc. Nat. Acad. Sci. U.S.A. {\bf81} (1984), no. 10, 3254--3255. 

\bibitem{c-s} L. Carleson and P. Sj\"olin, {\it Oscillatory integrals and a multiplier problem for the disc,} Studia Math., {\bf 44} (1972), 287--299.

\bibitem{c-k-l-s} Y. Cho, Y. Kim, S. Lee, and Y. Shim, {\it Sharp $L^p-L^q$ estimates for Bochner-Riesz operators of negative index in $\R^n$, $n\ge3$,} J. Funct. Anal. {\bf 218} (2005), no. 1, 150--167.

\bibitem{christ} M. Christ, \emph{On almost everywhere convergence of Bochner-Riesz means in higher dimensions}, 
Proc. Amer. Math. Soc. 95 (1985), no. 1, 16--20. 

\bibitem{c-m1} R. Coifman and Y. Meyer,{\it On commutators of singular integrals and bilinear singular integrals,} Trans. Amer. Math. Soc. {\bf212} (1975), 315--331.



\bibitem{co1} A. Cordoba, {\it A note on Bochner-Riesz operators,} Duck Math. J., {\bf 46} No.3 (1979), 505-511. 

\bibitem{co2} \bysame, {\it Geometric Fourier analysis,} Ann. Inst. Fourier, Grenoble, {\bf 32}, 3 (1982), 215-226.

\bibitem{d-p-t} C. Demeter, M. Pramanik, and C. Thiele, {\it Multilinear singular operators with fractional rank}, Pacific J. Math. {\bf246} (2010), no. 2, 293--324.

\bibitem{d-t} C. Demeter and C. Thiele, {\it On the two-dimensional bilinear Hilbert transform,} Amer. J. Math. {\bf32} (2010), no. 1, 201--256.

\bibitem{d-g} G. Diestel and L. Grafakos, {\it Unboundedness of the ball bilinear multiplier operator}, Nagoya Math. J., {\bf 185} (2007), 151-159.

\bibitem{f1}C. Fefferman, {\it Inequalities for strongly singular convolution operators,} Acta Math. {\bf124} (1970) 9--36. 

\bibitem{f} \bysame, {\it The multiplier problem for the ball,} Ann. of Math. (2) {\bf 94} (1971), 330-336.

\bibitem{gr} L. Grafakos, {\it Classical Fourier Analysis, Second Edition.} Graduate Texts in Mathematics {\bf 249}, springer, New York, NY, 2009.  

\bibitem{g-k} L. Grafakos and N. Kalton, {\it The Marcinkiewicz multiplier condition for bilinear operators.} Studia Math. {\bf146} (2001), no. 2, 115--156. 

\bibitem{g-l} L. Grafakos and X. Li, {\it The disc as a bilinear multipler,}  Amer. J. Math., {\bf 128} (2006), 91-119.

\bibitem{l-t} M. Lacey and C.Thiele, {\it Lp estimates on the bilinear Hilbert transform for $2<p<\infty$}, Ann. of Math. (2) {\bf146} (1997), no. 3, 693--724.

 \bibitem{l-t1}\bysame, {\it On Calder\'on's conjecture}, Ann. of Math. (2) {\bf149} (1999), no. 2, 475--496. 
 
\bibitem{lee} S. Lee, {\it Some sharp bounds for the cone multiplier of negative order in $\R^3$,} Bull. London Math. Soc. {\bf35} (2003), no. 3, 373--390.

\bibitem{l} \bysame, {\it Improved bounds for Bochner-Riesz and maximal Bochner-Riesz operators,} Duck Math. J., {\bf 122} (2004) 205-232.

\bibitem{lee2}\bysame, \emph{Bilinear restriction estimates for surfaces with curvatures of different signs},  Trans. Amer. Math. Soc. 358 
(2006), 3511--3533. 

\bibitem{lee1}\bysame, {\it Square function estimates for the Bochner-Riesz means},  arXiv:1708.01084.

\bibitem{LRV}   S. Lee, K.M. Rogers, and  A. Seeger, \emph{Improved bounds for Stein's square functions},  Proc. Lond. Math. Soc. (3) 104 (2012), no. 6, 1198--1234.

\bibitem{l-s} S. Lee and I. Seo, {\it Sharp bounds for multiplier operators of negative indices associated with degenerate curves,} Math. Z. {\bf 267} (2011), no. 1-2, 291--323.


\bibitem{m-t} A. Miyachi and N. Tomita,{\it Minimal smoothness conditions for bilinear Fourier multipliers}, Rev. Mat. Iberoam. {\bf29} (2013), no. 2, 495--530.

\bibitem{So} C. D. Sogge, {\it Oscillatory integrals and spherical harmonics},  Duke Math. J. \textbf{53} (1986), no. 1, 43--65.

\bibitem{sogge}\bysame, {\it Fourier integrals in classical analysis,} Cambridge Tracts in Mathematics, {\bf105} Cambridge University Press, Cambridge, 1993.

\bibitem{stein} E.M. Stein, {\it Singular integrals and differentiability properties of functions,} Princeton Math. Ser. {\bf30} Princeton University Press, Princeton, N.J. 1970.

\bibitem{stein1}\bysame,{\it Harmonic analysis: real-variable methods, orthogonality, and oscillatory integrals,} Princeton Math. Ser. {\bf43}, Monogr. in Harmon. Anal. {\bf 3}, Princeton University Press, Princeton, NJ, 1993.

\bibitem{tao} T. Tao, \emph{Some recent progress on the restriction conjecture}, Fourier analysis and convexity, 217--243, Appl. Numer. Harmon. Anal., Birkhäuser Boston, Boston, MA, 2004. 

 \bibitem{tao-book} \bysame, \emph{Nonlinear dispersive equations. Local and global analysis,} CBMS Regional Conference Series in Mathematics, 106. 

\bibitem{t-v1} T. Tao and A. Vargas, {\it A bilinear approach to cone multipliers, I : Restriction estimates, } Geom. Funct. Anal. {\bf10} (2000), no. 1, 185--215. 

\bibitem{t-v} \bysame, {\it A bilinear approach to cone multipliers, II:  Applications,} Geom. Funct. Anal. {\bf10} (2000), no. 1, 216--258. 

\bibitem{t-v-v} T. Tao, A. Vargas, and L. Vega {\it A bilinear approach to the restriction and Kakeya conjectures,} J. Amer. Math. Soc. {\bf11} (1998), no. 4, 967--1000.  

\bibitem{tomita} N. Tomita, {\it  A H\"ormander type multiplier theorem for multilinear operators}, J. Funct. Anal. {\bf259} (2010), no. 8, 2028--2044. 


\bibitem{wolff}  T. Wolff, \emph{A sharp bilinear cone restriction estimate}, Ann. of Math. (2) 153 (2001), 661--698.


\end{thebibliography}

\end{document}